\numberwithin{equation}{section}
\theoremstyle{plain}
\newtheorem{lemma}{Lemma}[section]
\newtheorem{proposition}[lemma]{Proposition}
\newtheorem{theorem}[lemma]{Theorem}
\newtheorem{corollary}[lemma]{Corollary}
\theoremstyle{definition}
\newtheorem{definition}[lemma]{Definition}
\newtheorem{remark}[lemma]{Remark}
\begin{document}
\newcommand{\R}{{\mathbb R}}
\newcommand{\C}{{\mathbb C}}
\newcommand{\F}{{\mathbb F}}
\renewcommand{\O}{{\mathbb O}}
\newcommand{\Z}{{\mathbb Z}} 
\newcommand{\N}{{\mathbb N}}
\newcommand{\Q}{{\mathbb Q}}
\renewcommand{\H}{{\mathbb H}}
\renewcommand{\P}{{\mathbb P}}
\newcommand{\Aa}{{\mathcal A}}
\newcommand{\Bb}{{\mathcal B}}
\newcommand{\Cc}{{\mathcal C}}    
\newcommand{\Dd}{{\mathcal D}}
\newcommand{\Ee}{{\mathcal E}}
\newcommand{\Ff}{{\mathcal F}}
\newcommand{\Gg}{{\mathcal G}}    
\newcommand{\Hh}{{\mathcal H}}
\newcommand{\Kk}{{\mathcal K}}
\newcommand{\Ii}{{\mathcal I}}
\newcommand{\Jj}{{\mathcal J}}
\newcommand{\Ll}{{\mathcal L}}    
\newcommand{\Mm}{{\mathcal M}}    
\newcommand{\Nn}{{\mathcal N}}
\newcommand{\Oo}{{\mathcal O}}
\newcommand{\Pp}{{\mathcal P}}
\newcommand{\Qq}{{\mathcal Q}}
\newcommand{\Rr}{{\mathcal R}}
\newcommand{\Ss}{{\mathcal S}}
\newcommand{\Tt}{{\mathcal T}}
\newcommand{\Uu}{{\mathcal U}}
\newcommand{\Vv}{{\mathcal V}}
\newcommand{\Ww}{{\mathcal W}}
\newcommand{\Xx}{{\mathcal X}}
\newcommand{\Yy}{{\mathcal Y}}
\newcommand{\Zz}{{\mathcal Z}}

\newcommand{\zt}{{\tilde z}}
\newcommand{\xt}{{\tilde x}}
\newcommand{\Ht}{\widetilde{H}}
\newcommand{\ut}{{\tilde u}}
\newcommand{\Mt}{{\widetilde M}}
\newcommand{\Llt}{{\widetilde{\mathcal L}}}
\newcommand{\yt}{{\tilde y}}
\newcommand{\vt}{{\tilde v}}
\newcommand{\Ppt}{{\widetilde{\mathcal P}}}
\newcommand{\bp }{{\bar \partial}} 
\newcommand{\ad}{{\rm ad}}
\newcommand{\Om}{{\Omega}}
\newcommand{\om}{{\omega}}
\newcommand{\eps}{{\varepsilon}}
\newcommand{\Di}{{\rm Diff}}
\renewcommand{\Im}{{ \rm Im \,}}
\renewcommand{\Re}{{\rm Re \,}}

\renewcommand{\a}{{\mathfrak a}}
\renewcommand{\b}{{\mathfrak b}}
\newcommand{\e}{{\mathfrak e}}
\renewcommand{\k}{{\mathfrak k}}
\newcommand{\m}{{\mathfrak m}}
\newcommand{\pg}{{\mathfrak p}}
\newcommand{\g}{{\mathfrak g}}
\newcommand{\gl}{{\mathfrak gl}}
\newcommand{\h}{{\mathfrak h}}
\renewcommand{\l}{{\mathfrak l}}
\newcommand{\sm}{{\mathfrak m}}
\newcommand{\n}{{\mathfrak n}}
\newcommand{\s}{{\mathfrak s}}
\renewcommand{\o}{{\mathfrak o}}
\renewcommand{\so}{{\mathfrak so}}
\renewcommand{\u}{{\mathfrak u}}
\newcommand{\su}{{\mathfrak su}}
\newcommand{\X}{{\mathfrak X}}

\newcommand{\ssl}{{\mathfrak sl}}
\newcommand{\ssp}{{\mathfrak sp}}
\renewcommand{\t}{{\mathfrak t }}
\newcommand{\Cinf}{C^{\infty}}
\newcommand{\la}{\langle}
\newcommand{\ra}{\rangle}
\newcommand{\ha}{\scriptstyle\frac{1}{2}}
\newcommand{\p}{{\partial}}
\newcommand{\notsub}{\not\subset}
\newcommand{\iI}{{I}}               
\newcommand{\bI}{{\partial I}}      
\newcommand{\LRA}{\Longrightarrow}
\newcommand{\LLA}{\Longleftarrow}
\newcommand{\lra}{\longrightarrow}
\newcommand{\LLR}{\Longleftrightarrow}
\newcommand{\lla}{\longleftarrow}
\newcommand{\INTO}{\hookrightarrow}

\newcommand{\QED}{\hfill$\Box$\medskip}
\newcommand{\UuU}{\Upsilon _{\delta}(H_0) \times \Uu _{\delta} (J_0)}
\newcommand{\bm}{\boldmath}
\newcommand{\coker}{\mbox{coker}}

\title[Lagrangian submanifolds in  strict  nearly K\"ahler 6-manifolds]{\large Lagrangian submanifolds in  strict  nearly K\"ahler 6-manifolds}
\author[H.V. L\^e and L. Schwachh\"ofer]{H\^ong V\^an L\^e${}^{1}$  and Lorenz Schwachh\"ofer${}^{2}$}
 \date{\today}
 \thanks {H.V.L. is partially supported by RVO: 67985840}
\date{\today}
\medskip
\address{${}^{1}$Institute  of Mathematics of ASCR,
Zitna 25, 11567  Praha 1, Czech Republic} 
\address{${}^{2}$Fakult\"at f\"ur Mathematik,
Technische Universit\"at Dortmund,
Vogelpothsweg 87, 44221 Dortmund, Germany}

\abstract Lagrangian submanifolds  in  strict   nearly   K\"ahler 6-manifolds   are related to    special Lagrangian submanifolds in  Calabi-Yau 6-manifolds  and coassociative    cones  in $G_2$-manifolds. We    prove that    the mean  curvature  of  a Lagrangian  submanifold $L$  in a nearly  K\"ahler  manifold $(M^{2n}, J, g)$ is symplectically dual to  the Maslov  1-form on $L$.   Using relative calibrations, we   derive   a  formula   for the  second variation of  the volume of a  Lagrangian submanifold $L^3$ in  a strict nearly K\"ahler  manifold $(M^6, J, g)$.  This formula implies, in particular,  that any  formal infinitesimal Lagrangian deformation of $L^3$  is  a Jacobi field on $L^3$. We describe   a   finite dimensional  local model of   the moduli space  of compact Lagrangian submanifolds  in a  strict  nearly K\"ahler 6-manifold.  We show that    there is a  real analytic atlas   on $(M^6, J, g)$  in which  the  strict  nearly K\"ahler structure $(J, g)$ is  real analytic.  Furthermore,   w.r.t.  an analytic   strict   nearly K\"ahler structure  the moduli space of Lagrangian submanifolds of $M^6$ is a real analytic variety, whence infinitesimal Lagrangian deformations are smoothly obstructed if and only if they are formally obstructed.
As an application, we relate our results to the description of Lagrangian submanifolds in the sphere $S^6$ with the standard  nearly K\"ahler structure described in \cite{Lotay2012}.

\endabstract 
\subjclass[2010]{Primary  53C40, 53C38, 53D12, 58D99}
\maketitle
\tableofcontents

{\it Key words:  nearly K\"ahler $6$-manifold,  Lagrangian submanifold, calibration, Jacobi field, moduli space}

\section{Introduction}\label{sec:intro} 
 Nearly K\"ahler manifolds    first  appeared in Gray's   work \cite{Gray1970} in connection with Gray's notion of weak holonomy. Nearly K\"ahler manifolds  
represent  an important class   in the 16 classes of  almost Hermitian manifolds   $(M^{2n}, J, g)$ classified by Gray and Hervella \cite{GH1980}.
Let us recall    the definition of a nearly K\"ahler manifold $(M^{2n}, J, g)$.    Let $\nabla^{LC}$ denote the Levi-Civita covariant derivative  associated with the Riemannian metric $g$.

\begin{definition}\label{def:nk}(\cite[\S 1, Proposition 3.5]{Gray1970}, \cite{Gray1976}) An almost Hermitian  manifold $(M^{2n}, g, J )$ is called {\it nearly K\"ahler}
if $(\nabla^{LC}_X J )X = 0$ for all $X \in T M^{2n}$.
A nearly K\"ahler manifold is called {\it strict} if we have $\nabla^{LC}_X J \not= 0$ for all $X \in T M^{2n}\setminus \{0\}$, and
it is called of {\it constant type} if for every $x \in M^{2n}$ and $X, Y \in T_x M^{2n}$,
$$||(\nabla^{LC}_X J )Y ||^2 = \lambda^2(||X||^2 ||Y ||^ 2 - \la X, Y\ra ^2- \la  J X, Y\ra^ 2 ),$$
where $\lambda$ is a positive constant.
\end{definition}


\begin{remark}\label{rem:resc} 1.  It is known that any  complete   simply connected nearly K\"ahler  manifold  is a Riemannian product  $M_1\times M_2$  where $M_1$ and 
$M_2$  are K\"ahler   respectively strict nearly K\"ahler  manifolds \cite{Kirichenko1977, Nagy2002}.  Furthermore, a de Rham  type  decomposition   of a strictly nearly K\"ahler  manifold was found by Nagy \cite{Nagy2002a}, where  the factors  of  the decomposition  are    of the following types: 3-symmetric  spaces,  twistor   spaces over quaternionic K\"ahler manifolds of  positive scalar curvature, and strict nearly K\"ahler 6-manifolds.

2. It is easy  to see that if $(M^{2n}, J, g)$ is a  nearly K\"ahler  manifold  of constant  type   $\lambda$, then $(M^{2n}, J, \lambda ^{-1} g)$   and
$(M^{2n}, -J, \lambda^{-1}g)$  are nearly  K\"ahler manifolds  of  constant type  $1$.

3. According to \cite[Theorem 5.2]{Gray1976}, a strict nearly K\"ahler manifold of dimension $6$ is always of constant type.

\end{remark}

On an almost  Hermitian manifold $(M^{2n}, J, g)$   the {\it fundamental  2-form  $\om$},  defined by $\om (X, Y): = g  (JX, Y)$,  measures the connection between  the  almost complex structure $J$  and the Riemannian metric $g$. 
A submanifold $L^n \subset  (M^{2n}, J, g)$  is called {\it Lagrangian}, if $\om|_{L^n} = 0$. 
As  in  symplectic geometry, the  graph  of a  diffeomorphism  of $M^{2n}$ that preserves  $\om$ is a  Lagrangian  submanifold in the almost Hermitian manifold  $(M^{2n} \times M^{2n}, J \oplus (- J), g \oplus g)$.  If  $(M^{2n}, J, g)$  is K\"ahler, then  $\om$ is  symplectic. Lagrangian
submanifolds in K\"ahler manifolds  have been     studied in the context of  calibrated geometry \cite{HL1982}  and  of relative  calibrations \cite{Le1989}, \cite{Le1990},
in the investigation  of the Maslov class \cite{Morvan1981}, \cite{LF1987},   of the variational problem \cite{Le1989}, \cite{Oh1990}, \cite{SW2003}, \cite{SS2010}, and  of the deformation problem/ moduli  spaces \cite{Bryant1987},  \cite{Butscher2003}, \cite{Hitchin1997},  \cite{LO2012}, \cite{McLean1998},  \cite{SS2010},  etc.  The literature on the subject is vast, and the authors    omit   the  name of  many important  papers in the field.

The relation     between   nearly  K\"ahler   manifolds  $(M^{2n},J, g)$ and     Riemannian manifolds  with special  holonomy is best   manifested in   dimension $2n =6$. In dimension 6,  a nearly K\"ahler manifold is either  a K\"ahler manifold or a strict nearly K\"ahler  manifold \cite[Theorem 5.2]{Gray1976}.
It is known from  B\"ar's work \cite{Baer1993}  that  a cone  without singular point  over   a  strict  nearly  K\"ahler manifold $(M^6, J, g)$ is a 7-manifold with $G_2$-holonomy.   It  is not hard to see that  the   cone  over a Lagrangian  submanifold $L^3 $ in a strict nearly K\"ahler manifold $(M^6, J, g)$  is a coassociative cone in  $CM^6$. Thus the study  of strict nearly  K\"ahler   6-manifolds   and their  Lagrangian  submanifolds are  essential for   the study of    singular points of  $G_2$-manifolds   as well as   for the study of singular points  of  coassociative   4-folds. Furthermore,    special Lagrangian  submanifolds  in  Calabi-Yau 6-manifolds   could  be   treated as a limit case of  Lagrangian  submanifolds in nearly K\"ahler  manifolds  when the type constant  $\lambda$ goes to zero   (Remarks \ref{rem:nk6}, \ref{rem:sl}).  We also note that Lagrangian submanifolds in the standard nearly K\"ahler  manifold $S^6$ are found to be intimately  related to holomorphic curves  in $\C P^2$ and they  present  extremely rich  geometry  \cite{DV1996},  \cite{DVV1990}, \cite{Lotay2011}.

In this  paper  we study  Lagrangian  submanifolds  $L^3$ in   strict nearly K\"ahler  6-manifolds $(M^6, J, g)$  in  two aspects:  the  variation of the  volume functional  and Lagrangian deformations    of $L^3$.  Since  $L^3$ are minimal  submanifolds in $(M^6, J, g)$ (Corollary \ref{cor:minnk6}), these   two aspects are related to each other.  In particular, results  from  theory  of minimal submanifolds   are  applicable  to   Lagrangian  submanifolds in  strict nearly K\"ahler 6-manifolds, for instance  see  Remark  \ref{rem:killing}.
To study variation of the   volume functional  of $L^3$  we  apply  the  method  of  relative calibrations  developed  by the  first named   author in \cite{Le1989, Le1990}.
  To study
deformations   of   Lagrangian  submanifolds   in $(M^6, J, g)$ we develop  several  methods.   First  we  reduce  the  overdetermined   equation   for Lagrangian deformations to  an elliptic equation (Proposition \ref{prop:ell}).   Since
 the Fredholm index  of  the  elliptic equation is zero (Proposition \ref{prop:kern})  and, on the other hand,  most interesting   examples  of   Lagrangian  submanifolds   have  nontrivial   deformations, the usual elliptic  method      yields   only limited   results. Thanks  to  our result on the analyticity  of  a strict  nearly  K\"ahler structure (Proposition  \ref{thm:NK-anal}),  we   reduce  the smooth Lagrangian deformation problem to the deformation problem  
in the analytic  category (Proposition \ref{prop:ana}
).  We prove that    the moduli  space  of   Lagrangian deformation is locally  an analytical variety  and hence
an  infinitesimal  Lagrangian deformation is smoothly unobstructed iff  it is  formally unobstructed (Theorem \ref{thm:moduli}, Corollary \ref{cor:formal-tau}). 

Our paper is organized   as follows.  In section \ref{sec:canh} we collect some important results on    the canonical Hermitian connection on nearly K\"ahler manifolds.  Then we  prove  the  existence of a real  analytic structure on any strict  nearly  K\"ahler  6-manifold $(M^6, J, g)$  
in which both $J$ and $g$ are real analytic (Proposition  \ref{thm:NK-anal}).
 In section \ref{sec:strict6}, using a result of the first named author \cite{Le1989},  we  establish  a relation between the Maslov 1-form  and the mean curvature  of a Lagrangian submanifold  in a nearly K\"ahler   manifold $(M^{2n}, J, g)$  (Proposition \ref{prop:maslov}) and show  its consequences (Corollaries \ref{cor:min1}, \ref{cor:minnk6}). 
If  $(M^{2n}, J, g)$ is a strictly   nearly K\"ahler  6-manifold, we derive  a simple formula for the second variation of a Lagrangian submanifold in $(M^6, J, g)$  using relative calibrations (Theorem \ref{thm:jac}). This formula implies, in particular, that any formal infinitesimal  Lagrangian  deformation  is a Jacobi field, generalizing  a result  obtained by McLean for
 special  Lagrangian  submanifolds  (Corollary \ref{cor:harm}, Remarks \ref{rem:h1}, \ref{rem:sl}, \ref{rem:simons}).  

 In section \ref{sec:moduli}  we show that the moduli space of closed Lagrangian submanifolds $L^3 \subset M^6$ of a  strict nearly K\"ahler manifold in the $C^1$-topology is locally a real analytic variety. That is, the set of $C^1$-small deformations of a compact Lagrangian submanifold can be describe as the inverse image of a point of a real analytic map between open domains in a finite dimensional vector space (Theorem \ref{thm:moduli}).
This
 leads us to the following
 \
 
{\bf Conjecture}.  The   group $\Di (M^6, [\om])$  of  all  diffeomorphisms  $g$ of a strictly nearly K\"ahler manifold $(M^6, J, g)$ that  preserves  the fundamental  2-form $\om$ in its conformal class, i.e., $ g^*(\om) = e^f \om$  for   some  $f \in C^\infty(M^6)$, is  a finite  dimensional Lie group.  

Another support  of our conjecture  is our result   that  the group  $\Di (M^6, \om)$  of  all  diffeomorphisms  of a strict nearly K\"ahler manifold $(M^6, J, g)$ that  preserve  the fundamental  2-form $\om$  is a  subgroup  of the  isometry  group  of $(M^6, g)$.

We prove that  any  smooth Lagrangian  deformation  of  a Lagrangian   submanifold $L^3$ in  an analytic  strict nearly K\"ahler  6-manifold   can be written as  a  convergent  power  series (Theorem \ref{thm:moduli}).  As  a result, we look  at   Lagrangian submanifolds  in  standard nearly K\"ahler  sphere  $S^6$   from an  perspective that is different  from  consideration by Lotay in \cite{Lotay2012} (Remark \ref{rem:killing})

\section{Geometry  of   nearly K\"ahler manifolds}\label{sec:canh}
In this section we collect   some important  results on    the canonical Hermitian connection   on 
nearly   K\"ahler  manifolds
(Propositions \ref{cann}, \ref{prop:cannk})  and derive  an important   consequence (Corollary \ref{cor:delta}), which  plays  a central role   in the geometry
of strict nearly K\"ahler 6-manifolds (Proposition \ref{prop:slcal}, Remark \ref{rem:nk6}).
At the end of  this   section we prove the existence  of a real  analytic   structure on $M^6$, in which
both the complex  structure $J$  and the metric  $g$  are  analytic (Proposition \ref{thm:NK-anal}).

\subsection{The canonical Hermitian  connection}\label{subs:canh}


Let $U(M^{2n})$  denote the  principal bundle  consisting of unitary frames $(e_1, Je_1,\cdots, e_n, Je_n)$ over an almost Hermitian manifold $(M^{2n}, J, g)$.  Denote by
$\{e_i ^*,(J e _i) ^*\}$  the dual  frames. Then $\{\theta ^i: = e_i ^* +\sqrt{-1} (Je_i)^*\}$ is the canonical $\C^n$-valued 1-form on $U(M)$. 

Let $\alpha$ be a unitary connection   1-form on  $U(M)$ and  $T$ its torsion 2-form.  The Cartan equation for $\alpha$,   and $T$ \cite[Chapter IX, \S 3]{KN1969} \cite[\S 3]{Le1989}  is  expressed as follows 
$$d\theta ^i = -\alpha ^i _j \wedge \theta ^j +  T^{i}_{ j k} \theta ^j \wedge \theta ^k + T^i_{\bar j k} \bar \theta ^j \wedge \theta ^k + T^i _{\bar j \bar k} \bar \theta ^j \wedge \bar \theta ^k,   $$
$$d \alpha ^i_j = - \alpha ^i _k \wedge \alpha ^k _j + \Om^i _j ,$$
where $\Om$  is the curvature tensor of $\alpha$.
 
\begin{proposition}\label{cann}  (\cite[Chapter IV, \S 112]{Lichnerowicz1955}) Let $(M^{2n}, J, g)$  be  an almost Hermitian manifold. Then there exists a unique  unitary connection  1-form $\alpha $ on  $U(M^{2n})$   such that  its 
torsion  tensor $T$  is a two-form of type $(2,0) + (0,2)$, i.e.,
\[
T(JX, Y) = T(X, JY).
\]
\end{proposition}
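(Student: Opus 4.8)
The plan is to build the desired connection as a correction of an arbitrary unitary one and to solve for the correction coefficient by coefficient. Since $U(M^{2n})$ is a principal $U(n)$-bundle it admits at least one unitary connection $1$-form $\alpha_0$ (obtained by patching local connections with a partition of unity); write its torsion, via the first Cartan equation, with coefficients $S^i_{jk}$, $S^i_{\bar j k}$, $S^i_{\bar j\bar k}$ of types $(2,0)$, $(1,1)$, $(0,2)$. Every other unitary connection is of the form $\alpha=\alpha_0+\beta$, where $\beta=(\beta^i_j)$ is a tensorial $1$-form on $M^{2n}$ with values in $\u(n)$, i.e. $\beta^i_j=-\overline{\beta^j_i}$, and conversely every such $\beta$ gives a unitary connection. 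Subtracting the first Cartan equation for $\alpha_0$ from that for $\alpha$ shows that the torsion of $\alpha$ is that of $\alpha_0$ plus $\beta^i_j\wedge\theta^j$. Finally, for a $2$-form the condition $T(JX,Y)=T(X,JY)$ is equivalent (replace $Y$ by $JY$) to $T(JX,JY)=-T(X,Y)$, hence to the vanishing of the $\bar\theta\wedge\theta$ (type $(1,1)$) component of every $T^i$; so the task reduces to choosing $\beta$ that kills these components.

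Decompose $\beta^i_j=\beta^i_{jk}\theta^k+\beta^i_{j\bar k}\bar\theta^k$. The relation $\beta^i_j=-\overline{\beta^j_i}$ is equivalent to $\beta^i_{jk}=-\overline{\beta^j_{i\bar k}}$ for all $i,j,k$, so $\beta$ is uniquely determined by its \emph{mixed} coefficients $\beta^i_{j\bar k}$, which may be assigned arbitrarily. Of the terms of $\beta^i_j\wedge\theta^j$, only $\sum_{j,k}\beta^i_{j\bar k}\,\bar\theta^k\wedge\theta^j$ is of type $(1,1)$, and since the $n^2$ forms $\bar\theta^k\wedge\theta^j$ are linearly independent, vanishing of the $(1,1)$-part of the new torsion means precisely
\[
\beta^i_{j\bar k}=-\,S^i_{\bar k j}\qquad(1\le i,j,k\le n).
\]
This system determines the mixed coefficients, hence $\beta$, hence $\alpha=\alpha_0+\beta$, uniquely; and the resulting $\alpha$ is unitary with torsion of type $(2,0)+(0,2)$. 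This gives both existence and uniqueness.

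The statement is classical and the content is entirely linear-algebraic; the only delicate point is the interaction of the skew-Hermitian constraint with the type decomposition. The \emph{pure} coefficients $\beta^i_{jk}$ do not enter the $(1,1)$-part of the torsion at all, so the argument closes only because unitarity rigidly ties $\beta^i_{jk}$ to $\beta^i_{j\bar k}$; equivalently, the linear map $\beta\mapsto(\beta\wedge\theta)^{(1,1)}$ from $\u(n)$-valued covectors to $\C^n$-valued $(1,1)$-forms is an isomorphism, both sides having real dimension $2n^3$, so that existence and uniqueness here amount to the same statement. The same bookkeeping can be carried out invariantly: write $\nabla=\nabla^{LC}+A$ with each $A_X$ skew-symmetric, use $\nabla J=0$ to express the part of $A_X$ that anticommutes with $J$ in terms of $\nabla^{LC}J$, and use the torsion-type condition to pin down the part of $A_X$ that commutes with $J$; this recovers the same connection and makes transparent its dependence on $\nabla^{LC}J$, the tensor governing the nearly K\"ahler condition.
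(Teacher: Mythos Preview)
The paper does not supply its own proof of this proposition; it is stated with a reference to Lichnerowicz and used as a black box. Your argument is a correct and self-contained verification. The computation is clean: the skew-Hermitian constraint $\beta^i_j=-\overline{\beta^j_i}$ indeed ties the $(1,0)$-coefficients $\beta^i_{jk}$ rigidly to the $(0,1)$-coefficients $\beta^i_{j\bar k}$, while only the latter enter the $(1,1)$-part of $\beta^i_j\wedge\theta^j$; this is exactly what makes the linear system square, and your dimension count $2n^3=2n^3$ confirms it. The closing remark about the invariant version via $\nabla=\nabla^{LC}+A$ is also accurate and connects nicely to Proposition~\ref{prop:cannk}, where the paper records $T(X,Y)=-J\nabla^{LC}_X(J)Y$ in the nearly K\"ahler case.
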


We shall denote the Levi-Civita connection of $g$ and the canonical connection from this proposition by $\nabla^{LC}$ and $\nabla^{can}$, respectively. If the almost Hermitian manifold is nearly K\"ahler, then the following is known.

\begin{proposition}\label{prop:cannk} (\cite{Gray1976},  \cite[Theorem 1]{Kirichenko1977}) Suppose that $(M^{2n}, J, g)$  is  a nearly K\"ahler manifold.
\begin{enumerate}
\item Then $T(X, Y) = -J \nabla^{LC} _X(J) Y$.
\item The  associated torsion form  $T^* (X, Y, Z) : =\la  T(X, Y), Z\ra $ is  skew-symmetric. 
\item   $\nabla ^{can}  T^* = 0$.
\end{enumerate}
\end{proposition}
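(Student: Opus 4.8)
The plan is to pin down $\nabla^{can}$ explicitly as a modification of the Levi--Civita connection and then read off its torsion, using the uniqueness in Proposition~\ref{cann} to identify it. Write $\eta_XY:=(\nabla^{LC}_XJ)Y$ for the endomorphism-valued $1$-form $\nabla^{LC}J$. I would first record three elementary facts valid on any nearly K\"ahler manifold: \textbf{(a)} polarizing $(\nabla^{LC}_XJ)X=0$ gives $\eta_XY=-\eta_YX$; \textbf{(b)} differentiating $J^2=-\mathrm{id}$ gives $\eta_X\circ J=-J\circ\eta_X$, i.e.\ $\eta_X(JY)=-J\eta_XY$; \textbf{(c)} differentiating $g(J\cdot,J\cdot)=g(\cdot,\cdot)$ and using (b) gives that each $\eta_X$ is $g$-skew-symmetric. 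Then I set $A_XY:=-\tfrac12 J\eta_XY$ and $\nabla^{can}:=\nabla^{LC}+A$; the claim is that this is the connection of Proposition~\ref{cann}.

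For (1): since $J$ is $g$-skew and $J\eta_X=-\eta_XJ$ by (b), the product $J\eta_X$ is $g$-skew, so $A_X$ is $g$-skew and $\nabla^{can}$ is metric. The induced action on $J$ is $\nabla^{can}_XJ=\nabla^{LC}_XJ+[A_X,J]$ with $[A_X,J]:=A_X\circ J-J\circ A_X$, and using (b) one gets $J\eta_XJ=\eta_X$, hence $[A_X,J]=-\tfrac12(J\eta_XJ+\eta_X)=-\eta_X$; thus $\nabla^{can}_XJ=\eta_X-\eta_X=0$ and $\nabla^{can}$ is unitary. Since $\nabla^{LC}$ is torsion-free, the torsion of $\nabla^{can}$ is $T(X,Y)=A_XY-A_YX=-\tfrac12 J(\eta_XY-\eta_YX)=-J\eta_XY$ by (a), which is the asserted formula. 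Finally $T$ is of type $(2,0)+(0,2)$: from (a) and (b), $\eta_{JX}Y=-\eta_Y(JX)=J\eta_YX=-J\eta_XY$, so $T(JX,Y)=-J\eta_{JX}Y=-\eta_XY$, and $T(X,JY)=-J\eta_X(JY)=-\eta_XY$ by (b) as well. Uniqueness in Proposition~\ref{cann} then identifies $\nabla^{can}$ with the canonical Hermitian connection, proving (1).

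Part (2) is then quick: the trilinear form $\psi(X,Y,Z):=\langle\eta_XY,Z\rangle$ is skew in $X\leftrightarrow Y$ by (a) and in $Y\leftrightarrow Z$ by (c), so it is a $3$-form; and $T^*(X,Y,Z)=\langle T(X,Y),Z\rangle=\langle\eta_XY,JZ\rangle=\psi(X,Y,JZ)$ is again alternating in all three slots (for $Y\leftrightarrow Z$: $\langle\eta_XZ,JY\rangle=-\langle Z,\eta_X(JY)\rangle=\langle Z,J\eta_XY\rangle=-\langle\eta_XY,JZ\rangle$, using (c) then (b)). For (3), since $\nabla^{can}$ preserves both $g$ and $J$, it is enough to show that $\eta=\nabla^{LC}J$ is $\nabla^{can}$-parallel, and unwinding $\nabla^{can}=\nabla^{LC}+A$ gives
\[
(\nabla^{can}_W\eta)(X,Y)=\bigl((\nabla^{LC})^2_{W,X}J\bigr)Y+A_W(\eta_XY)-\eta_{A_W X}Y-\eta_X(A_WY),
\]
so everything reduces to a formula for the second covariant derivative $(\nabla^{LC})^2J$.

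The main obstacle is precisely that formula. Differentiating $(\nabla^{LC}_XJ)X=0$ once more gives algebraic relations among the components of $(\nabla^{LC})^2J$; combining these with the Ricci identity and the second Bianchi identity for $\nabla^{LC}$, and invoking Gray's curvature identities for nearly K\"ahler manifolds, one should obtain $\bigl((\nabla^{LC})^2_{W,X}J\bigr)Y$ as an explicit expression quadratic in the values of $\eta$ and polynomial in $J$. Substituting this into the display above, the four terms ought to cancel identically, giving $\nabla^{can}\eta=0$ and hence $\nabla^{can}T^*=0$. I expect this curvature computation --- essentially reproving the piece of Gray's analysis of nearly K\"ahler curvature needed for the cancellation --- to be the only genuinely non-routine step; parts (1), (2) and the reduction in (3) are formal, and the outcome then matches \cite{Gray1976} and \cite{Kirichenko1977}.
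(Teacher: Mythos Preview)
The paper does not actually prove this proposition; it is stated with citations to \cite{Gray1976} and \cite{Kirichenko1977} and then used. So there is no ``paper's own proof'' to compare against --- you have supplied what the paper omits.

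Your arguments for (1) and (2) are correct and cleanly executed: the three identities (a)--(c) are exactly what is needed, the verification that $\nabla^{LC}-\tfrac12J(\nabla^{LC}J)$ is unitary with torsion of type $(2,0)+(0,2)$ is right, and the appeal to the uniqueness in Proposition~\ref{cann} is the proper way to identify it with $\nabla^{can}$. The skew-symmetry check for $T^*$ is also fine.

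For (3) your reduction is correct, but you stop at the point where the real work begins. The displayed identity for $(\nabla^{can}_W\eta)(X,Y)$ is right, and after substituting $A_W=-\tfrac12J\eta_W$ the three correction terms become
\[
-\tfrac12 J\eta_W\eta_XY \;+\;\tfrac12 J\eta_Y\eta_WX\;-\;\tfrac12 J\eta_X\eta_WY,
\]
so what you need is precisely Gray's formula expressing $(\nabla^{LC})^2_{W,X}J$ as this same quadratic expression in $\eta$ (up to sign). That identity is the content of \cite[Lemma on p.~237]{Gray1976}; it is obtained by differentiating $\eta_XX=0$ and combining with the symmetries (a)--(c), and does \emph{not} require Bianchi or the full curvature identities --- only the defining nearly K\"ahler condition and polarization. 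So your outline is sound but slightly overcomplicates the expected input: you can get the second-derivative formula by elementary symmetrization rather than by invoking curvature. Once that lemma is quoted or reproved, the cancellation is immediate and (3) follows. As written, (3) remains a sketch with the key lemma deferred to the literature, which is consistent with how the paper itself treats the whole proposition.
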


The skew-symmetry of the torsion of the canonical connection of a nearly K\"ahler manifold $(M^{2n}, J, g)$ will play an important r\^ole in our study of $(M^{2n}, J, g)$.

We  shall derive from Proposition  \ref{prop:cannk}  the following

\begin{corollary}\label{cor:delta}
On a nearly K\"ahler  manifold $(M, J, g)$ we have
$d \om (X,Y, Z) = -3T^*(X, Y, JZ)$.  Furthermore, $d\om$ is a 3-form of type $(3, 0) + (0, 3)$, that is,
\[
d \om (JX,Y, Z) = d \om (X,JY, Z) = d \om (X,Y, JZ).
\]
In particular, $\nabla^{can}(d\om) = 0$.
\end{corollary}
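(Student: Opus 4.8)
The plan is to compute $d\om$ through the Levi-Civita connection, rewrite the result in terms of the torsion $T$ via Proposition \ref{prop:cannk}, and then exploit the skew-symmetry of $T^*$ together with its $(2,0)+(0,2)$-type.

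First I would use that $\nabla^{LC}$ is torsion-free, so for the $2$-form $\om$ one has $d\om(X,Y,Z) = (\nabla^{LC}_X\om)(Y,Z) + (\nabla^{LC}_Y\om)(Z,X) + (\nabla^{LC}_Z\om)(X,Y)$. Since $\nabla^{LC} g = 0$ and $\om(\cdot,\cdot) = g(J\cdot,\cdot)$, each summand equals $g\big((\nabla^{LC}_X J)Y, Z\big)$. By Proposition \ref{prop:cannk}(1), $(\nabla^{LC}_X J)Y = J\,T(X,Y)$, and because $J$ is skew with respect to $g$ this gives $g\big((\nabla^{LC}_X J)Y, Z\big) = -g\big(T(X,Y), JZ\big) = -T^*(X,Y,JZ)$. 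Hence $d\om(X,Y,Z) = -\big(T^*(X,Y,JZ) + T^*(Y,Z,JX) + T^*(Z,X,JY)\big)$.

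Next I would show the three summands coincide. By Proposition \ref{prop:cannk}(2) $T^*$ is a genuine $3$-form, hence invariant under cyclic permutation of its arguments; by Proposition \ref{cann} the condition $T(JX,Y) = T(X,JY)$ is equivalent to $T^*(JX,Y,Z) = T^*(X,JY,Z)$. Alternating these two operations — cyclic shift, and "moving $J$ across the first two slots" — converts both $T^*(Y,Z,JX)$ and $T^*(Z,X,JY)$ into $T^*(X,Y,JZ)$, yielding $d\om(X,Y,Z) = -3\,T^*(X,Y,JZ)$. This index bookkeeping is the only step that requires any care, but it is completely elementary.

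Finally, the type statement follows by inserting $J$ into one argument at a time: $d\om(X,Y,JZ) = -3\,T^*(X,Y,J^2 Z) = 3\,T^*(X,Y,Z)$, while $d\om(JX,Y,Z) = -3\,T^*(JX,Y,JZ)$ and $d\om(X,JY,Z) = -3\,T^*(X,JY,JZ)$; the same cyclic-shift/$J$-transfer manipulation, now combined with $J^2 = -\mathrm{id}$, gives $T^*(JX,Y,JZ) = T^*(X,JY,JZ) = -T^*(X,Y,Z)$, so all three expressions equal $3\,T^*(X,Y,Z)$, which is exactly the claim that $d\om$ is of type $(3,0)+(0,3)$. For the last assertion, note that $d\om = -3\,T^*(\cdot,\cdot,J\cdot)$ is built from $T^*$ and $J$ by a $\nabla^{can}$-natural operation; since $\nabla^{can}$ is unitary we have $\nabla^{can}J = 0$, and $\nabla^{can}T^* = 0$ by Proposition \ref{prop:cannk}(3), whence $\nabla^{can}(d\om) = 0$. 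I expect the only mild obstacle to be the permutation gymnastics in the second and third steps; everything else is formal.
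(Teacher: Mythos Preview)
Your argument is correct. The only difference from the paper's proof is in how the identity $d\om(X,Y,Z)=3\,g\big((\nabla^{LC}_XJ)Y,Z\big)$ is obtained. The paper simply quotes the Gray--Hervella characterisation of nearly K\"ahler manifolds, $X\rfloor d\om=3\nabla^{LC}_X\om$ \cite[Theorem~3.1]{GH1980}, which gives this identity in one line and makes the equality of your three cyclic summands automatic. You instead start from the general torsion-free formula $d\om=\sum_{\mathrm{cyc}}\nabla^{LC}\om$, translate each summand into $-T^*(\cdot,\cdot,J\cdot)$ via Proposition~\ref{prop:cannk}(1), and then use the skew-symmetry of $T^*$ together with the $(2,0)+(0,2)$ property of $T$ to collapse the cyclic sum. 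This is a legitimate and self-contained alternative: you are effectively re-proving the Gray--Hervella identity from the properties of the canonical torsion already recorded in Propositions~\ref{cann} and~\ref{prop:cannk}, at the cost of the permutation bookkeeping you flag. The treatment of the type statement and of $\nabla^{can}(d\om)=0$ is the same in both proofs.
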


\begin{proof} 
 We  use the fact that  the nearly K\"ahler condition is equivalent to the following  condition \cite[Theorem 3.1]{GH1980}
\begin{equation}
X \rfloor d\om = 3 \nabla ^{LC} _X \om \label{eq:dom1}
\end{equation}
for all $X \in TM^{2n}$. A straightforward  calculation shows that (\ref{eq:dom1}) implies
\begin{equation}
d\om (X, Y, Z)= 3 \la  \nabla^{LC}_X (J) Y, Z\ra.\label{eq:dom2}
\end{equation}
Since $T(X, Y) = -J \nabla^{LC} _X(J) Y$, we   obtain  immediately  the first  assertion  of Corollary \ref{cor:delta}.
The second   assertion follows  from  the  first  one, taking into account the fact that $T$ is a 2-form of  type $(2, 0) + (0, 2)$. Finally, the parallelity of $d\om$ follows from Proposition \ref{prop:cannk} (3) and since $\nabla^{can}$ is unitary.
\end{proof}

\subsection{Strict nearly K\"ahler 6-manifolds}\label{sub:nk6}

Among nearly  K\"ahler   manifolds  the class of  strict nearly K\"ahler  6-manifolds  are   most  well-studied. By \cite[Theorem 5.2 (1)]{Gray1976}, any  6-dimensional  nearly K\"ahler manifold,  which is not K\"ahler, is of constant type, i.e., there is a constant $\lambda > 0$ such that 
\begin{equation}
|\nabla_X^{LC} (J) Y | ^2 = \lambda^2\big(| X | ^2 |Y| ^2 - \la X, Y \ra ^2 - \la X, JY \ra ^2\big). \label{eq:calb}
\end{equation}

\begin{proposition}\label{prop:slcal} Assume that $(M^6, g, J)$ is a   strict  nearly  K\"ahler manifold  with constant  type $\lambda$ (cf. (\ref{eq:calb})).
Then $\frac{1}{ 3\lambda}d\om$ is a special Lagrangian calibration.  In particular,  $(M^6, g, J)$ has   an $SU(3)$-structure. 
\end{proposition}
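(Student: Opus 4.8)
Recall (Harvey--Lawson) that on a Hermitian complex $3$-space the \emph{special Lagrangian calibration} is the real part $\Re(\theta^1\wedge\theta^2\wedge\theta^3)$ of the decomposable $(3,0)$-form attached to a unitary coframe $\{\theta^i=e_i^*+\sqrt{-1}(Je_i)^*\}$; it is a sum of four mutually orthogonal decomposable unit $3$-forms, has comass $1$, and its faces are the special Lagrangian $3$-planes. By Corollary~\ref{cor:delta}, $d\om$ is a $3$-form of type $(3,0)+(0,3)$, and by (\ref{eq:dom2}) $d\om(X,Y,Z)=3\la\nabla^{LC}_X(J)Y,Z\ra$; moreover $\frac{1}{3\lambda}d\om$ is closed, being exact. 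So the whole statement reduces to checking that at each point $\frac{1}{3\lambda}d\om$ agrees, in some unitary coframe, with $\Re(\theta^{123})$. Since $\Lambda^{3,0}$ is $1$-dimensional over $\C$ and every real $(3,0)+(0,3)$-form is $\Re\Om$ for a unique $\Om\in\Lambda^{3,0}$, in any unitary coframe $\frac{1}{3\lambda}d\om=\Re(c\,\theta^{123})$ with $c\in\C$, and after multiplying $\theta^1$ by a unit complex number we may take $c>0$. It then remains only to pin down $c$, which I do via the pointwise norm.

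\textbf{Computing $|d\om|^2$.} Fix an orthonormal frame $\{e_i\}_{i=1}^6$ of $T_xM^6$ and put $a_{ijk}:=\la\nabla^{LC}_{e_i}(J)e_j,e_k\ra$, so $d\om(e_i,e_j,e_k)=3a_{ijk}$; since $d\om$ is a $3$-form, $a$ is totally skew, whence
\[
|d\om|^2=\tfrac{1}{3!}\sum_{i,j,k}(3a_{ijk})^2=\tfrac32\sum_{i,j}|\nabla^{LC}_{e_i}(J)e_j|^2 .
\]
The constant type condition (\ref{eq:calb}) gives, for each fixed $i$,
\[
\sum_{j=1}^{6}|\nabla^{LC}_{e_i}(J)e_j|^2=\lambda^2\sum_{j=1}^{6}\bigl(1-\delta_{ij}-\la e_i,Je_j\ra^2\bigr)=\lambda^2\bigl(6-1-|Je_i|^2\bigr)=4\lambda^2,
\]
using $\sum_j\la e_i,Je_j\ra^2=\sum_j\la -Je_i,e_j\ra^2=|Je_i|^2=1$. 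Summing over $i$ yields $|d\om|^2=\tfrac32\cdot24\lambda^2=36\lambda^2$, so $\bigl|\tfrac{1}{3\lambda}d\om\bigr|^2=4=|\Re(\theta^{123})|^2$.

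\textbf{Conclusion, and the main obstacle.} With $c>0$ as above, $4=\bigl|\tfrac{1}{3\lambda}d\om\bigr|^2=|2\Re(c\,\theta^{123})|^2=4c^2|\Re(\theta^{123})|^2=16c^2$, so $c=\tfrac12$ and $\tfrac{1}{3\lambda}d\om=\Re(\theta^{123})$ in this unitary coframe; hence $\tfrac{1}{3\lambda}d\om$ is pointwise the special Lagrangian calibration form, and being closed it is a special Lagrangian calibration. The unique $\Om\in\Lambda^{3,0}$ with $\Re\Om=\tfrac{1}{3\lambda}d\om$ equals $\theta^{123}$ in that coframe, hence is nowhere zero, of constant pointwise norm, and satisfies $\om\wedge\Om=0$ for type reasons; together with $\om$ (after an irrelevant constant rescaling) it is a reduction of the $U(3)$-structure of $(M^6,J,g)$ to an $SU(3)$-structure. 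The only genuine computation is the norm identity $|d\om|^2=36\lambda^2$, which rests entirely on the constant type condition; everything else is Harvey--Lawson's comass-one fact for $\Re(\theta^{123})$ and elementary linear algebra on $\Lambda^{3,0}(\C^3)$. A minor point to keep honest is that $\tfrac{1}{3\lambda}d\om$ is a genuinely closed calibration --- immediate, since it is exact --- not merely a relative calibration, and that phase/orientation conventions are arranged so that the component of $d\om$ singled out is the comass-$1$ one.
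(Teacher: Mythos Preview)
Your approach is sound and the main computation $|d\om|^2=36\lambda^2$ is correct, but there is an arithmetic slip in the final step: you write ``$4=|2\Re(c\,\theta^{123})|^2=4c^2|\Re(\theta^{123})|^2=16c^2$'', where the factor $2$ has no source. Since you set $\tfrac{1}{3\lambda}d\om=\Re(c\,\theta^{123})$, the correct chain is $4=c^2|\Re(\theta^{123})|^2=4c^2$, hence $c=1$, not $c=\tfrac12$. Your stated conclusion $\tfrac{1}{3\lambda}d\om=\Re(\theta^{123})$ is then right, but it does not follow from $c=\tfrac12$; please clean this up.

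As for the comparison with the paper: the paper argues the comass bound \emph{directly} from (\ref{eq:dom2}) and (\ref{eq:calb}), showing that on an orthonormal triple $X,Y,Z$ spanning a $3$-plane $\Sigma$ one has $|d\om(X,Y,Z)|\le 3\lambda$ with equality exactly when $\Sigma$ is Lagrangian (using that $\nabla^{LC}_X(J)Y\perp X,Y,JX,JY$). Your route is a slight repackaging: you first use Corollary~\ref{cor:delta} to place $\tfrac{1}{3\lambda}d\om$ in the real part of the one-dimensional $\Lambda^{3,0}$, then pin down the scale by a global norm computation, and finally invoke Harvey--Lawson's comass-$1$ result for $\Re(\theta^{123})$. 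The paper's argument is marginally more self-contained (it does not appeal to the Harvey--Lawson comass computation), while yours makes the $SU(3)$-structure explicit from the start. Both rest on the same ingredient, the constant type identity~(\ref{eq:calb}).
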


\begin{proof} It is an immediate consequence of (\ref{eq:dom2}) and (\ref{eq:calb}) that $\frac1{3\lambda}d\om$ is a  special Lagrangian calibration, i.e.,  $\om|_\Sigma = 0$ on any $3$-plane $\Sigma \subset T_pM$ with $d\om|_\Sigma = 3 \lambda vol_\Sigma$.

Since $d\om$ is parallel w.r.t. $\nabla^{can}$ and is of type $(3,0) + (0,3)$ by Corollary \ref{cor:delta}, it follows that the complex linear $(3,0)$-form $\Phi \in \Om^3(M, \C)$ given as
\begin{equation} \label{complex-volume}
\begin{array}{rll} \Phi(X, Y, Z) & := & \dfrac1{3 \lambda} \big(d\om(X, Y, Z) - \sqrt{-1}\; d\om(X, Y, JZ)\big)\\ \\
& = & \dfrac1{3 \lambda} \big(d\om(X, Y, Z) + 3 \sqrt{-1}\; T^*(X, Y, Z)\big)
\end{array}
\end{equation}
is parallel w.r.t. $\nabla^{can}$ and nowhere vanishing, so that a strict nearly K\"ahler $6$-manifold carries a canonical $SU(3)$-structure.
\end{proof}

The above argument also shows that for any $3$-dimensional subspace $\Sigma \subset T_pM$ we have (cf. \cite[Chapter III]{HL1982})
\begin{equation} \label{Lagrangian-calibrated}
|\Phi_\Sigma|^2 \leq |vol_\Sigma|^2 \mbox{ with equality if and only if $\om|_\Sigma = 0$.}
\end{equation}

Namely, the first estimate and that equality holds only if $\om_\Sigma = 0$ follows immediately from (\ref{eq:dom2}) and (\ref{eq:calb}). For the converse, let $X,Y,Z$ be an orthonormal basis of a Lagrangian plane. Then (\ref{eq:dom2}) implies that $\nabla_X^{LC}(J) Y$ is orthogonal to $X, Y, JX, JY$, so that $Z \in span (\nabla_X^{LC}(J) Y, J \nabla_X^{LC}(J) Y)$. From this and (\ref{eq:calb}), equality in (\ref{Lagrangian-calibrated}) follows.

Evidently, there are no calibrated submanifolds of $d\om$, since on such a manifold $L \subset M$ we would have $\om|_L = 0$ and hence, $3 \lambda vol_L = d\om|_S = 0$. Thus, by (\ref{Lagrangian-calibrated}) on a Lagrangian submanifold $L \subset M$, $-Im(\Phi)|_L = -\lambda^{-1} T^*|_L$ is a volume form on $L$, i.e., it is calibrated by the non-closed $3$-form $Im(\Phi)$, see also \cite{Le1989}, \cite{Le1990}.

\begin{remark}\label{rem:orient}  For the remainder of our paper we shall choose the orientation on a Lagrangian submanifold $L^3$ such that
\begin{equation} \label{orient-Lagr}
vol_{L^3} = -Im(\Phi)|_{L^3}.
\end{equation}
This   orientation agrees with the 
natural  orientation  of the Lagrangian   sphere $S^3(1) = \H\cap S^6$ in the  standard     strictly nearly K\"ahler    sphere $S^6\subset \Im \O$, see  also subsection \ref{subs:ex} below.  Note that our  choice  of the orientation  of $L^3$ agrees  with that   in   \cite[p. 2309]{Lotay2011}, but  differs  from that in \cite[p. 18]{SS2010}.
\end{remark}

\begin{remark}\label{rem:nk6}  By the above discussion, a nearly  K\"ahler 6-manifold  $(M^6, J, g, \om)$ of   constant  type $\lambda$  satisfies the following equation (cf. \cite[\S 4]{CS2002})
\begin{equation}
d\om = 3 \lambda \, \Re (\Phi) \text{ and }  d\,  \Im  (\Phi)  =-2 \lambda\, \om \wedge \om.\label{eq:csnk6}
\end{equation}
Thus, a  Calabi-Yau 6-manifold can be regarded as an almost strict nearly K\"ahler manifold  with $\lambda = 0$.
\end{remark}

In principle, one could verify (\ref{eq:csnk6}) by a direct calculation, but there is a more elegant way to do this, due to C. B\"ar. Namely, first of all, by rescaling   the  metric $g$ (Remark \ref{rem:resc})  we  can assume that  the  metric is of  constant  type  $\lambda = 1$.

In \cite[\S 7]{Baer1993} B\"ar  constructed   a  3-form  $\varphi$ on the   cone $CM^6= M^6 \times _{r^2} \R^ +$  supplied with     the warped   Riemannian metric
$\bar g = r^2 g +  dr ^2$ over a  strict nearly K\"ahler 6-manifold $(M^6, J, g)$ of constant type $1$.  We identify $M^6$ with
$M^6 \times \{1\} \subset CM^6$.   The form $\varphi$ on $CM^6$ is defined by  \cite[\S 7]{Baer1993}
\begin{equation}
\varphi (r, x) = \frac{r^3}{ 3} d\om + r^2 dr \wedge  \om .\label{eq:phi}
\end{equation}
Since  $d\om$  is of type $(3,0) + (0,3)$  and of comass  3,
for any $x\in M^6$ there is   a local unitary  basis  $((e_1)^*, (Je _1)^*,  \cdots, (Je_3)^*)$   at $T^*_xM^6$  such that
$d\om =  3\Re (dz^1 \wedge dz^2\wedge  dz ^3)$ and $2\om  = - \Im (dz^1 \wedge d \bar  z_1 + dz ^2 \wedge d\bar  z^2 + dz ^3 \wedge  d\bar z^3)$. Here  $dz^i : = (e_i)^* + \sqrt {-1}(Je_i)^*$.
In this basis,  rewriting $dr =  e ^ 7$  and abbreviating $\eps ^{ijk} =  e^i \wedge e^j \wedge  e^k$, we have
\begin{equation}
\varphi (r, x)  = (\eps ^{135}  - \eps ^{146} - \eps ^{236} - \eps ^{245}) + \eps ^{127} + \eps ^{347}  + \eps ^{567}. \label{eq:baer}
\end{equation}
 Clearly, $d\varphi = 0$. B\"ar  also showed that  $d^* \varphi = 0$. Thus $\varphi$  is a 3-form of $G_2$-type  and of comass  1.   In particular,  $\varphi$  (resp. $*\varphi$) is  an associative  (resp.  coassociative) calibration on $CM^6$.  
Furthermore, $d^* \varphi = 0 $ implies  the   second   relation  in (\ref{eq:csnk6})  for $\lambda = 1$. This implies the following result.

\begin{proposition} \label{thm:NK-anal}
Let $(M^6, J, g)$ be a strict nearly-K\"ahler manifold. Then there is a real analytic structure on $M^6$ in which both the complex structure $J$ and the metric $g$ are real analytic. 
\end{proposition}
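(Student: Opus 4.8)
The plan is to use the fact that $(M^6,J,g)$ carries a $G_2$-structure on its metric cone which is \emph{parallel}, hence the cone metric $\bar g = r^2 g + dr^2$ has holonomy contained in $G_2$ and is in particular Ricci-flat. The strategy rests on the classical theorem of DeTurck--Kazdan that an Einstein metric is real analytic in harmonic (geodesic normal) coordinates. So I would first invoke B\"ar's result, already recalled in the excerpt: since $\varphi$ (as in (\ref{eq:baer})) satisfies $d\varphi = 0$ and $d^*\varphi = 0$, it is a parallel $G_2$-structure on $CM^6$, so $\bar g$ is Ricci-flat. By DeTurck--Kazdan, $CM^6$ admits an atlas of harmonic coordinates in which $\bar g$ is real analytic; equivalently $CM^6$ carries a real analytic structure (compatible with its smooth one) in which $\bar g$ is real analytic. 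The parallel form $\varphi$, being $\nabla^{\bar g}$-parallel and hence a solution of an overdetermined linear elliptic system with real analytic coefficients, is then itself real analytic in this atlas (alternatively: $\varphi$ is harmonic, $\Delta_{\bar g}\varphi = 0$, with real analytic coefficients, so by elliptic regularity for analytic operators it is real analytic).

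Next I would transfer the analyticity down to $M^6 = M^6\times\{1\}\subset CM^6$. Since $M^6$ is a real analytic hypersurface in the real analytic manifold $CM^6$ (it is a level set $\{r = 1\}$ of a function that is real analytic in the DeTurck--Kazdan atlas --- one must check this, but $r$ is the distance to the cone point, which is harmonic-coordinate-analytic away from the apex), it inherits a real analytic structure. The induced metric $g = \bar g|_{M^6}$ (up to the factor $r^2 = 1$ along $M^6$) and the outward unit normal $\p_r$ are then real analytic objects on $M^6$. Finally I would recover $J$ from $\varphi$ and the normal direction: on a strict nearly K\"ahler $6$-manifold, the almost complex structure is determined algebraically by the cross product on $\Im\O$-type data, concretely $J X = \bar\nabla_X \p_r$ or, more invariantly, $\om = \p_r\,\lrcorner\,(r^{-2}\varphi)|_{r=1}$ up to normalization, and $J$ is reconstructed from $\om$ and $g$ by $g(JX,Y) = \om(X,Y)$. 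Since $\varphi$, $\bar g$, and the vector field $\p_r$ are all real analytic in the chosen atlas, so are $\om$, $g$ and hence $J$ on $M^6$.

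The main obstacle I expect is the bookkeeping needed to make the hypersurface argument precise: one must verify that the radial function $r$ (equivalently the cone structure) is real analytic in the DeTurck--Kazdan harmonic atlas on $CM^6\setminus\{0\}$, so that $\{r=1\}$ is a genuine real analytic submanifold and the restriction/normal-derivative operations preserve analyticity. One clean way around this is to avoid hypersurfaces entirely: restrict attention to the open set $CM^6\setminus\{0\}\cong M^6\times\R^+$, note that the dilation $\rho\colon(x,r)\mapsto(x,\mu r)$ is a homothety of $\bar g$, hence maps harmonic coordinates to harmonic coordinates, so the radial flow is real analytic in the harmonic atlas; then the quotient $CM^6\setminus\{0\} \big/ \R^+ \cong M^6$ acquires a real analytic structure, in which the descended metric (the link metric) is $g$ and the descended structure form gives $J$, all real analytic. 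Either route works; the homothety route is slightly more robust because it does not require separately establishing analyticity of $r$.

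A secondary point to be careful about is uniqueness/compatibility: DeTurck--Kazdan guarantees a real analytic structure, and one should remark (as the authors presumably do elsewhere) that it is compatible with the given smooth structure, which is automatic since harmonic coordinates are smooth coordinates. No new machinery beyond B\"ar's cone construction (already in the excerpt), the DeTurck--Kazdan analyticity theorem, and elliptic regularity for the parallel form is needed, so the proof is short modulo these citations.
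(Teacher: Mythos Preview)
Your argument is essentially correct but follows a different route from the paper. You apply DeTurck--Kazdan to the cone $(CM^6,\bar g)$, using that the torsion-free $G_2$-structure makes $\bar g$ Ricci-flat, and then must descend the analytic structure to $M^6$; as you note, this requires showing that $r$ (or the hypersurface $\{r=1\}$) is analytic in the harmonic atlas, which is a genuine extra step. Your homothety argument does close this gap: since constant dilations preserve harmonicity, the flow is analytic, its generator $r\,\partial_r$ is an analytic vector field, and then $r^2=\bar g(r\partial_r,r\partial_r)$ is analytic. The paper proceeds in the opposite direction and thereby avoids the issue entirely: it invokes Gray's result that a strict nearly K\"ahler $6$-metric $g$ is itself Einstein, applies DeTurck--Kazdan directly on $M^6$ to make $g$ analytic, and then equips $CM^6=M^6\times\R^+$ with the product analytic structure, in which $r$, $\partial_r$, and $\bar g=dr^2+r^2g$ are trivially analytic. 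From there both arguments coincide: $\varphi$ is harmonic hence analytic, $\omega=(\partial_r\,\lrcorner\,\varphi)|_{r=1}$ is analytic, and $J$ follows from $g$ and $\omega$. The paper's order of operations is cleaner precisely because it sidesteps the bookkeeping you identified; your approach has the mild advantage of not needing to cite separately that $g$ is Einstein on $M^6$.
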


\begin{proof} It is known that a strict nearly K\"ahler metric on a 6-manifold $M^6$ is an Einstein metric \cite[Lemma 4.8]{Gray1976}. By the DeTurck-Kazdan theorem \cite{DK1981}, $M^6$ possesses an analytic atlas    in which $g$ is an analytic metric. It follows that in the induced real analytic structure on $CM^6$ the aforementioned cone metric $\bar g := dr^2 + r^2 g$ on $CM^6$ is analytic and the vector field $\partial_r$ on $CM^6$ is analytic. Since the form $\varphi \in \Om^3(CM^6, \bar g)$ from (\ref{eq:phi}) defining the $G_2$-structure on $CM^6$ is harmonic, it is analytic as well. Thus, $\p_r \rfloor \phi = r^2 \om$ is analytic, and so is its restriction to the analytic submanifold $M^6 \times \{1\} \subset CM^6$.

Therefore, $\om \in \Om^2(M^6)$ is analytic, and $J$ is defined by contraction of $\om$ with the real analytic metric $g$ and hence analytic as well.
\end{proof}

\section{ Variation of the  volume of Lagrangian   submanifolds}\label{sec:strict6}
In this section    we introduce the notion  of   the Maslov 1-form $\mu(L)$  of a Lagrangian  submanifold $L$ in a  Hermitian  manifold $(M^{2n}, J, g)$
and relate this  notion with the classical  notion  of the Maslov class  of  a Lagrangian  submanifold  in  $(\R^{2n}, \om_0)$ (Remark \ref{rem:maslov}). Then  we prove that $\mu(L)$
is symplectically dual  to the  twice of the mean  curvature $H_L$  of a Lagrangian submanifold   $L$ in a nearly K\"ahler  manifold $(M^{2n}, J, g)$ (Proposition \ref{prop:maslov})  and derive   its consequences (Corollaries \ref{cor:min1}, \ref{cor:minnk6}).  Using relative calibrations, we     prove
a  simple  formula  for the  second  variation  of the  volume  of  a  Lagrangian  submanifold in a strictly nearly K\"ahler 6-manifolds (Theorem \ref{thm:jac})  and discuss
its consequences (Corollary \ref{cor:harm}, Remarks  \ref{rem:h1}, \ref{rem:sl}, \ref{rem:simons}). We discuss   the relation between the obtained results with known results (Remark \ref{rem:maslov1},  \ref{rem:sl}, \ref{rem:simons}). 

\subsection{Maslov  1-form and minimality of a Lagrangian  submanifold  in a nearly K\"ahler manifold}

Let $L$  be a Lagrangian submanifold in  an almost Hermitian manifold $(M^{2n}, J, g)$ and  $(\alpha^i_j)$  the  canonical Hermitian connection  1-form on
$U(M^{2n}, J, g)$. The Gaussian map $g_L$ sends $L$ to the Lagrangian Grassmanian  $Lag(M^{2n})$ of  Lagrangian    subspaces in  the tangent bundle
of $M^{2n}$.  
Denote by $p: U(M^{2n}) \to Lag (M^{2n})$  the projection defined by
$$(v_1, Jv_1, \cdots, v_n, Jv_n) \mapsto  [ v_1 \wedge \cdots  \wedge v_n].$$
Set 
$$\gamma :=-\sqrt{-1} \sum_i \alpha ^i _i. $$  
We recall the  following fact

\begin{lemma}\label{lem:exi1} (cf. \cite{Bryant1987},\cite[Proposition 3.1]{Le1989})    There exists   a 1-form  $\bar \gamma$  on $Lag (M^{2n})$  whose pull-back to  the   unitary frame bundle $U(M^{2n})$ is equal  to   $\gamma$.
\end{lemma}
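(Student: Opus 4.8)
The plan is to prove the lemma by showing that $\gamma=-\sqrt{-1}\sum_i\alpha^i_i=-\sqrt{-1}\,\mathrm{tr}\,\alpha$ is a \emph{basic} $1$-form for the projection $p\colon U(M^{2n})\to Lag(M^{2n})$, so that it is the pull-back of a unique $1$-form $\bar\gamma$ on $Lag(M^{2n})$. The structural fact I would use is that $p$ is a principal $O(n)$-bundle. Indeed, over a point $x\in M^{2n}$ the fibre of $Lag(M^{2n})$ is the Lagrangian Grassmannian of $(T_xM^{2n},J_x,g_x)$, on which $U(n)$ acts transitively with stabiliser the real orthogonal group $O(n)\subset U(n)$, so the fibre is $U(n)/O(n)$; and under this identification the assignment $(v_1,Jv_1,\dots,v_n,Jv_n)\mapsto[v_1\wedge\cdots\wedge v_n]$ is exactly the quotient map for the right $O(n)$-action on the unitary frame bundle, because two unitary frames span the same Lagrangian plane precisely when they differ by an element of $O(n)$. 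Since $O(n)$ is compact and acts freely, $U(M^{2n})\to U(M^{2n})/O(n)=Lag(M^{2n})$ is a smooth principal $O(n)$-bundle, and a $1$-form on the total space descends iff it is $O(n)$-invariant and kills the tangent spaces to the $O(n)$-orbits.

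Granting this, I would verify the two conditions directly. For invariance, the connection $1$-form $\alpha$ satisfies $R_B^*\alpha=\mathrm{Ad}_{B^{-1}}\alpha=B^{-1}\alpha B$ for every $B\in U(n)$, hence $\mathrm{tr}(R_B^*\alpha)=\mathrm{tr}\,\alpha$ and $R_B^*\gamma=\gamma$; in particular this holds for all $B\in O(n)$. For horizontality, the vertical vectors of $p$ are the values of the fundamental vector fields $A^*$ attached to $A\in\o(n)$, the algebra of real skew-symmetric matrices; since $\alpha(A^*)=A$ and every such $A$ is traceless, $\gamma(A^*)=-\sqrt{-1}\,\mathrm{tr}\,A=0$. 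Therefore $\gamma$ is $O(n)$-basic and equals $p^*\bar\gamma$ for a unique $1$-form $\bar\gamma$ on $Lag(M^{2n})$, uniqueness being clear since $p$ is a submersion. Note that nothing here uses the specific choice of the canonical Hermitian connection: any unitary connection $1$-form would do, the canonical one becoming relevant only later, when $d\bar\gamma$ is computed.

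I do not expect a genuine obstacle — the lemma is in essence a bookkeeping statement — but two points deserve attention. First, one must pin down the identification $Lag(M^{2n})=U(M^{2n})/O(n)$ so that the induced projection is literally the given $p$; in particular one should check that the bracket in the definition of $p$ (a Lagrangian plane, resp. a line of decomposable $n$-vectors) does not secretly force the subgroup $SO(n)$, though even if it did the argument would be unchanged, since $\o(n)=\so(n)$ and $SO(n)$-invariance follows from $U(n)$-invariance. Second, the real reason $\gamma$ descends exactly to $Lag(M^{2n})$, and not merely to some larger intermediate quotient, is that $\o(n)\subset\u(n)$ is precisely the subspace annihilated by $\mathrm{tr}$; equivalently, $\gamma$ is $U(n)$-invariant but horizontal only for the $O(n)$-reduction, which is the very reduction defining $Lag(M^{2n})$. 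As a cross-check one can produce $\bar\gamma$ invariantly: the map $[A]_{O(n)}\mapsto(\det A)^2$ is well defined on $U(n)/O(n)$ and globalises to a smooth map from $Lag(M^{2n})$ to a $U(1)$-bundle over $M^{2n}$ (a square of the canonical circle bundle of $(M^{2n},J)$), and $\bar\gamma$ is, up to a constant factor, the pull-back along this map of the connection $1$-form that $\nabla^{can}$ induces on the corresponding line bundle.
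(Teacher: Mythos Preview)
Your argument is correct and complete: identifying $p\colon U(M^{2n})\to Lag(M^{2n})$ as a principal $O(n)$-bundle and checking that $\gamma=-\sqrt{-1}\,\mathrm{tr}\,\alpha$ is $O(n)$-basic (invariant under $R_B^*$ by conjugation-invariance of the trace, and horizontal because $\mathrm{tr}\,A=0$ for $A\in\o(n)$) is exactly the right mechanism. The paper itself does not supply a proof of this lemma; it only records the statement with citations to Bryant and to \cite[Proposition~3.1]{Le1989}, and your argument is the standard one that underlies those references. Your caveats about $O(n)$ versus $SO(n)$ and your cross-check via the squared-determinant map are accurate but not needed for the bare statement.
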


  We call $2\bar \gamma$ {\it the  universal  Maslov  1-form}  and the induced  1-form $g_L^*(2\bar \gamma)$ on $L$ {\it the Maslov  1-form of $L$}. We  also denote $g_L^*(2\bar \gamma)$ by $\mu (L)$.
  
\begin{remark}\label{rem:maslov} For $M^{2n} = \R^{2n}$   we have  $Lag (M^{2n})  = \R^{2n} \times  U(n)/O(n)$.   In this case  it  is well-known that  the    Maslov 1-form $\mu (L)$  is a closed  1-form and represents   its  Maslov index  of  a Lagrangian  submanifold $L$ \cite{Morvan1981}.  
\end{remark}

Now we  relate the Maslov  1-form   $\mu (L) : = g_L^*(2\bar \gamma)$  with the mean curvature  of  a  Lagrangian submanifold $L$.
We define    a linear isomorphism  $L_\om:   TM \to   T^*M$ as follows.
\begin{equation}
L_\om(V) : = V \rfloor \om.\label{eq:lom}
\end{equation}

\begin{proposition}\label{prop:maslov}  The  Maslov 1-form $\mu (L)$ is   symplectic dual to the minus twice  of the mean curvature $H_L$ of a Lagrangian  submanifold $L$ in a nearly K\"ahler manifold $(M^{2n}, J, g)$. That is,
$$- 2L_\om(H_L) =  \mu (L) .$$
\end{proposition}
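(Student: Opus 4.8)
The claim is local on $L$, so the plan is to compute both $\mu(L)$ and $L_\om(H_L)$ in a well-adapted moving frame and compare. Fix a point $p \in L$ and choose a local unitary frame $(e_1, Je_1, \dots, e_n, Je_n)$ of $M^{2n}$ along $L$ such that $e_1, \dots, e_n$ is tangent to $L$; since $L$ is Lagrangian, $Je_1, \dots, Je_n$ then spans the normal bundle. Along $L$ the Gauss map $g_L$ is covered by this unitary frame, so by definition $\mu(L) = g_L^*(2\bar\gamma)$ pulls back (via the frame) to $2\gamma = -2\sqrt{-1}\sum_i \alpha^i_i$ restricted to $TL$. Because $\alpha$ is unitary, each diagonal entry $\alpha^i_i$ is purely imaginary, so $2\gamma$ is a genuine real-valued $1$-form on $L$; concretely, writing the connection in the real frame, $\gamma$ is (up to sign) the trace of the $\so$-part mixing $e_i$ and $Je_i$, i.e. $2\gamma = -\sum_i \langle \nabla^{can} e_i, Je_i\rangle$ along $L$.

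\textbf{Relating $\nabla^{can}$ to $\nabla^{LC}$ and extracting the mean curvature.} The next step is to replace $\nabla^{can}$ by $\nabla^{LC}$ using Proposition \ref{prop:cannk}(1): the two connections differ by the tensor $X \mapsto -\tfrac12 J(\nabla^{LC}_X J)(\cdot)$ (the $(2,0)+(0,2)$-torsion term), so
\[
\langle \nabla^{can}_{e_j} e_i, Je_i\rangle = \langle \nabla^{LC}_{e_j} e_i, Je_i\rangle - \tfrac12 \langle J(\nabla^{LC}_{e_j}J)e_i, Je_i\rangle.
\]
The second term is controlled by the nearly K\"ahler identity $(\nabla^{LC}_X J)X = 0$: taking $X = e_i$ (no sum) kills the diagonal contribution $\langle (\nabla^{LC}_{e_i}J)e_i, \cdot\rangle$, and summing over $i$ against $Je_i$ one finds the torsion contribution to $\sum_i\langle\nabla^{can}_{e_j}e_i,Je_i\rangle$ vanishes — this uses the skew-symmetry of $T^*$ from Proposition \ref{prop:cannk}(2) (so that $T^*(e_j,e_i,e_i)=0$ and the full contraction collapses). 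Hence along $L$, $2\gamma(e_j) = -\sum_i \langle \nabla^{LC}_{e_j} e_i, Je_i\rangle$. Now $\sum_i \langle \nabla^{LC}_{e_j} e_i, Je_i\rangle = -\sum_i \langle e_i, \nabla^{LC}_{e_j}(Je_i)\rangle$; since $e_i$ is tangent and $Je_i$ normal, only the second fundamental form enters, and $\sum_i \langle \nabla^{LC}_{e_j}(Je_i), e_i\rangle = \sum_i \langle II(e_j,e_i), Je_i\rangle = \langle II(e_j,\cdot)^{\mathrm{trace}}, J(\cdot)\rangle$. Using the standard fact (valid for Lagrangian submanifolds of a nearly K\"ahler, indeed of any almost Hermitian, manifold) that the second fundamental form composed with $J$ is fully symmetric in the relevant sense, the trace over $i$ of $\langle II(e_j,e_i), Je_i\rangle$ equals $\langle H_L, Je_j\rangle = -\langle JH_L, e_j\rangle$. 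Therefore $2\gamma(e_j) = -\langle JH_L, e_j\rangle$ — wait, I must track the global sign, but up to it we get $\mu(L)(e_j) = \pm 2\langle JH_L, e_j\rangle$.

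\textbf{Matching with $L_\om(H_L)$.} Finally, unwind the right-hand side: $L_\om(H_L)(e_j) = \om(H_L, e_j) = g(JH_L, e_j)$. Comparing with the previous line gives $\mu(L) = \mp 2 L_\om(H_L)$; the claimed sign $-2$ is then fixed by one careful bookkeeping pass through the definitions of $\gamma$ (the factor $-\sqrt{-1}$), of $L_\om$, and of the orientation/sign conventions for $II$ and $H_L$. I expect the \emph{main obstacle} to be exactly this sign/normalization bookkeeping together with the verification that the torsion term of $\nabla^{can}-\nabla^{LC}$ contributes nothing to the trace $\sum_i \alpha^i_i$ — this is where the nearly K\"ahler hypothesis ($(\nabla^{LC}_XJ)X=0$ and skew-symmetry of $T^*$) is essential and where a naive almost-Hermitian computation would fail. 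Everything else is a routine moving-frame computation, and I would cite \cite[Proposition 3.1]{Le1989} for the construction of $\bar\gamma$ and the identification of its pullback so as not to repeat it. \QED
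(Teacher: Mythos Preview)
Your approach is essentially the same as the paper's, just unpacked: the paper observes that skew-symmetry of $T^*$ (Proposition~\ref{prop:cannk}(2)) forces the trace $\sum_{i,k} T^i_{\bar i\bar k}\bar\theta^k$ to vanish, then cites \cite[Lemmas~2.1, 3.1 and (3.6)]{Le1989} for the identity $\la -H_L,X\ra = (\mu(L)/2)(JX)$ for $X\in NL$, and finishes by rewriting this via $L_\om$. Your moving-frame computation is exactly what lies behind that citation, and your identification of the key step---that the torsion contribution to $\sum_i\alpha^i_i$ drops out thanks to the nearly K\"ahler torsion identities---is correct.

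Two points to clean up. First, your parenthetical ``indeed of any almost Hermitian, manifold'' is false: for a Lagrangian $L$ in a general almost Hermitian manifold the tensor $\sigma(X,Y,Z)=\la II(X,Y),JZ\ra$ satisfies $\sigma(X,Y,Z)-\sigma(X,Z,Y)=-(\nabla^{LC}_X\om)(Z,Y)$, which need not vanish. In the nearly K\"ahler case it does vanish because $\nabla^{LC}_X\om=\tfrac13\,X\rfloor d\om$ by (\ref{eq:dom1}) and $d\om|_L=d(\om|_L)=0$; this is precisely where the hypothesis enters, so do not give it away. Second, the sign bookkeeping you defer is the entire content once the torsion term is gone, so it should be carried out rather than waved at: with $\gamma=-\sqrt{-1}\sum_i\alpha^i_i$ one gets $\gamma(e_j)=\sum_i\la\nabla^{can}_{e_j}e_i,Je_i\ra$ (no minus sign), and after the torsion cancellation and the symmetry of $\sigma$ this becomes $\sum_i\la II(e_i,e_i),Je_j\ra=\la H_L,Je_j\ra=-L_\om(H_L)(e_j)$, yielding $\mu(L)=2\gamma=-2L_\om(H_L)$ on the nose.
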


\begin{proof} By Proposition \ref{prop:cannk}.2  the 1-form $\sum _{ik} T^i_{\bar i \bar k} \bar\theta ^k$  vanishes,  where $T$ is the   torsion of    the  connection form $\alpha$. Using \cite[Lemmas 2.1, 3.1  and (3.6)]{Le1989}, we obtain for any   normal vector $X$  to $L$
\begin{equation}
\la  -H_L, X \ra   = (\mu (L)/2 , JX) . \label{eq:89}
\end{equation}
Since $\om (-H_L, JX) = \la  -H_L,  X \ra$,  we derive  Proposition \ref{prop:maslov}   immediately from (\ref{eq:89}). 
\end{proof}

Since  the curvature $d\gamma$  form of  the connection form $\gamma$ is the first Chern form of a nearly K\"ahler  manifold we obtain immediately

\begin{corollary}\label{cor:min1}  Assume that  a Lagrangian  submanifold $L$  in a nearly  K\"ahler   manifold $(M, J, g)$ is minimal.
Then  the  restriction  of the first Chern  form  to $L$ vanishes.
\end{corollary}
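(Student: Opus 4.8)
The plan is to combine Proposition \ref{prop:maslov} with the fact, recalled just above the statement, that the curvature $d\gamma$ of the $U(1)$-connection form $\gamma = -\sqrt{-1}\sum_i \alpha^i_i$ on $U(M^{2n})$ represents the first Chern form of $(M^{2n},J)$. Since $L$ is minimal we have $H_L = 0$, so Proposition \ref{prop:maslov} gives $\mu(L) = -2\,L_\om(H_L) = 0$; it then suffices to identify the restriction of the first Chern form to $L$ with a constant multiple of $d\mu(L)$, which must therefore vanish.

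First I would compute $d\gamma$ from the second Cartan structure equation $d\alpha^i_j = -\alpha^i_k\wedge\alpha^k_j + \Om^i_j$. Taking the trace,
\[
d\gamma = -\sqrt{-1}\sum_i d\alpha^i_i = -\sqrt{-1}\,\sum_i\Om^i_i \;-\; \sqrt{-1}\sum_{i,k}\alpha^i_k\wedge\alpha^k_i,
\]
and the last sum vanishes: relabelling $i\leftrightarrow k$ and using anticommutativity of $1$-forms shows it equals its own negative. Hence $d\gamma = -\sqrt{-1}\,\mathrm{tr}\,\Om = \pi^*\rho$, where $\pi\colon U(M^{2n})\to M^{2n}$ is the projection and $\rho$ is, up to a fixed normalization constant, a representative of the first Chern form of $(M^{2n},J)$.

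Next I would push $d\bar\gamma$ down to the Lagrangian Grassmannian. By Lemma \ref{lem:exi1}, $\gamma = p^*\bar\gamma$ with $p\colon U(M^{2n})\to Lag(M^{2n})$, so $d\gamma = p^*d\bar\gamma$; since $\pi = q\circ p$ for the bundle projection $q\colon Lag(M^{2n})\to M^{2n}$ and $p$ is a surjective submersion, it follows that $q^*\rho = d\bar\gamma$. The Gauss map $g_L\colon L\to Lag(M^{2n})$ satisfies $q\circ g_L = \iota_L$, where $\iota_L\colon L\hookrightarrow M^{2n}$ is the inclusion. Therefore, using $\mu(L) = g_L^*(2\bar\gamma)$,
\[
\iota_L^*\rho = g_L^*(q^*\rho) = g_L^*(d\bar\gamma) = d\big(g_L^*\bar\gamma\big) = \tfrac12\,d\mu(L).
\]
Since $\mu(L) = 0$ by the first paragraph, $\iota_L^*\rho = 0$, which is the assertion.

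I do not expect a genuine difficulty here, which is why the statement is phrased as an immediate consequence. The one point that wants care is the bookkeeping identifying the restriction to $L$ of the first Chern form (a form on the base $M^{2n}$) with the Gauss-map pullback of $d\bar\gamma$ from $Lag(M^{2n})$: one must check that $\gamma$, and hence $d\gamma$, really is the pullback of a first-Chern representative from $M^{2n}$ (the content of the sentence preceding the corollary), and that the descent through $p$ and $q$ together with the normalization constant are handled consistently. Once this is in place, the vanishing of $\mu(L)$ for minimal $L$ finishes the argument.
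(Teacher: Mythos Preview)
Your proposal is correct and follows essentially the same approach as the paper, which treats the corollary as an immediate consequence of Proposition~\ref{prop:maslov} together with the observation (stated just before the corollary) that $d\gamma$ is the first Chern form. You have simply filled in the bookkeeping---the descent of $d\gamma$ through $p$ and $q$ and the identity $q\circ g_L=\iota_L$---that the paper leaves implicit.
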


In the remainder of this  section we    assume  that $L^3$ is a Lagrangian    submanifold in a   strict nearly K\"ahler manifold $(M^6, J, g)$.
We also need  to fix    some notations.
Recall the definition of the $\nabla^{can}$-parallel complex volume form $\Phi = dz_1 \wedge dz_2 \wedge dz_3$ from (\ref{complex-volume}), and let
$$ \alpha : = \Re \Phi = (3 \lambda)^{-1} d\omega,\: \beta : = \Im dz = - \lambda^{-1} T^*.$$

\begin{lemma}\label{lem:hl} Let $\xi$ be a simple 3-vector  in $\R^6 = \C^3$  and $\om$ the standard compatible symplectic form on $\R^6$. Then
\begin{enumerate}
 
\item (\cite[Chapter III Theorem 1.7]{HL1982}) $|\Phi(\xi)|^2  = \alpha (\xi) ^2  +\beta (\xi)^2$.

\item (\cite [Chapter III (1.17)]{HL1982}) $|\Phi(\xi)|^2 + \sum_{i =1}^3 |dz_i \wedge \om(\xi)| ^2 =  |\xi|^2$.
\end{enumerate}
\end{lemma}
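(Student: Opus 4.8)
The plan is to dispose of identity~(1) by unwinding definitions, and to reduce identity~(2)---the substantive one---to a short computation with $3\times3$ complex matrices. Identity~(1) is immediate: $\Phi=\alpha+\sqrt{-1}\,\beta$ with $\alpha,\beta$ \emph{real} $3$-forms, and $\xi$ is a real $3$-vector, so $\Phi(\xi)=\alpha(\xi)+\sqrt{-1}\,\beta(\xi)$ with $\alpha(\xi),\beta(\xi)\in\R$, whence $|\Phi(\xi)|^2=\alpha(\xi)^2+\beta(\xi)^2$. (Simplicity of $\xi$ is not needed here; it enters only in~(2).)

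For~(2) I would first reduce to an orthonormal frame. If $\xi=v_1\wedge v_2\wedge v_3$ with $v_1,v_2,v_3$ linearly dependent, then $\xi=0$ and every term vanishes; otherwise the $v_j$ span a real $3$-plane. Since each of $\xi\mapsto|\Phi(\xi)|^2$, $\xi\mapsto\sum_i|(dz_i\wedge\om)(\xi)|^2$ and $\xi\mapsto|\xi|^2$ is homogeneous of degree~$2$ on $\Lambda^3\R^6$, I may pass to a Gram--Schmidt orthonormal basis of that plane and hence assume $v_1,v_2,v_3$ orthonormal and $|\xi|=1$. Let $Z=\bigl(z_k(v_j)\bigr)_{k,j}\in\mathrm{Mat}_3(\C)$ record the complex coordinates of the $v_j$. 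With $\om=\sum_k dx_k\wedge dy_k$ (the normalisation $2\om=-\Im\sum_k dz^k\wedge d\bar z^k$ used around~(\ref{eq:baer})), one reads off $\Phi(\xi)=\det Z$, and
\[
(Z^{*}Z)_{ij}=\sum_k\overline{z_k(v_i)}\,z_k(v_j)=\la v_i,v_j\ra+\sqrt{-1}\,\om(v_i,v_j);
\]
so orthonormality gives $Z^{*}Z=I+\sqrt{-1}\,A$ with $A:=\bigl(\om(v_i,v_j)\bigr)$ real and skew-symmetric.

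It then remains to compute both sides. The eigenvalues of a $3\times3$ real skew-symmetric matrix are $0$ and $\pm\sqrt{-1}\,\mu$, so $|\Phi(\xi)|^2=\det(Z^{*}Z)=\det(I+\sqrt{-1}A)=1-\mu^2$, where $\mu^2=-\tfrac12\mathrm{tr}(A^2)=A_{12}^2+A_{13}^2+A_{23}^2$. On the other hand, expanding the wedge product gives $(dz_i\wedge\om)(v_1,v_2,v_3)=\sum_j Z_{ij}w_j$, where $w:=\bigl(\om(v_2,v_3),\,-\om(v_1,v_3),\,\om(v_1,v_2)\bigr)\in\R^3$ satisfies $\|w\|^2=\mu^2$; hence
\[
\sum_{i=1}^3|(dz_i\wedge\om)(\xi)|^2=\|Zw\|^2=w^{T}(Z^{*}Z)\,w=\|w\|^2+\sqrt{-1}\,w^{T}Aw=\|w\|^2=\mu^2,
\]
using $w^{T}Aw=0$ for skew $A$. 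Adding the two contributions yields $(1-\mu^2)+\mu^2=1=|\xi|^2$, which is~(2).

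I do not expect a genuine obstacle. The only delicate points are the bookkeeping of conventions (the normalisation of $\om$, and the sign in $\Phi(\xi)=\det Z$) and the passage $(dz_i\wedge\om)(\xi)=Zw$, i.e.\ the three-dimensional identification of an alternating $2$-form with a vector together with the facts $\|w\|^2=\mu^2$ and $w^{T}Aw=0$. An alternative route, closer to \cite[Chapter III]{HL1982}, would be to first bring $\xi$ into a normal form under the $U(3)$-action on $Gr_3(\R^6)$ (using its K\"ahler angles) and verify both identities there; the matrix computation above is a coordinate-free repackaging of that argument.
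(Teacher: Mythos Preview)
Your proof is correct. Note, however, that the paper does not supply its own proof of this lemma: both parts are simply quoted from Harvey--Lawson \cite{HL1982}, so there is no in-paper argument to compare against directly.

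On the substance: part~(1) is indeed a tautology once one unpacks $\Phi=\alpha+\sqrt{-1}\,\beta$. For part~(2), your matrix computation via $Z^{*}Z=I+\sqrt{-1}A$ is a clean repackaging of the Harvey--Lawson argument. The steps are all sound: the identification $(Z^{*}Z)_{ij}=\la v_i,v_j\ra+\sqrt{-1}\,\om(v_i,v_j)$ follows from $\overline{z_k}z_k=x_k^2+y_k^2+\sqrt{-1}(x_k\,dy_k-y_k\,dx_k)$ applied pointwise; the determinant $\det(I+\sqrt{-1}A)=1-\mu^2$ is the characteristic polynomial of a $3\times3$ real skew matrix; and the identity $(dz_i\wedge\om)(v_1,v_2,v_3)=(Zw)_i$ is the standard expansion of a $(1,2)$-wedge. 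The one place to be careful is that $w^{T}(I+\sqrt{-1}A)w$ is a priori complex, but since $w$ is real and $A$ skew, the imaginary part $w^{T}Aw$ vanishes and the expression equals $\|w\|^2$ --- exactly as you say.

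The Harvey--Lawson approach you allude to at the end (normal form under $U(3)$ via K\"ahler angles) gives the same identity after diagonalising $A$; your version avoids that reduction at the cost of the small linear-algebra observation $w^{T}Aw=0$. Either route is fine.
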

We chose the canonical orientation (\ref{orient-Lagr}) on $L$, i.e., $\beta |_{L^3} = - vol_{L^3}$.
For  $x \in L^3$ let $\xi(x)$  denote  the  unit  simple 3-vector  associated  with $T_xL^3$. 
By \cite[Lemma 2.1]{Le1989}, \cite[Lemma 1.1]{Le1990}  for any  $V\in NL^3$  we  obtain  
\begin{equation}
\la  - H_{L^3}, V \ra  = (V \rfloor  d\pm \beta) (\xi).\label{eq:dbeta1}
\end{equation}
(In \cite{Le1989} L\^e showed that the  formula (\ref{eq:dbeta1})  is equivalent to  the  formula (\ref{eq:89}).) 
Using (\ref{eq:csnk6}), we obtain  immediately that $H_{ L^3} = 0$.

\begin{corollary}\label{cor:minnk6}  Any Lagrangian submanifold $L^3$ in a strict nearly K\"ahler  6-manifold  $(M^6, J, g)$  is orientable and minimal.  Hence   its Maslov 1-form vanishes.
\end{corollary}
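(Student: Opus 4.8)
The plan is to read off all three assertions from facts already in hand: the first-variation identity (\ref{eq:dbeta1}) of the relative calibration method, the structure equations (\ref{eq:csnk6}), and the equality case of the pointwise inequality (\ref{Lagrangian-calibrated}). After rescaling the metric as in Remark \ref{rem:resc} one may assume $\lambda = 1$, though this is only cosmetic.

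\emph{Orientability.} Since $L^3$ is Lagrangian we have $\om|_{L^3} = 0$, and pulling back the identity $\alpha = (3\lambda)^{-1}d\om$ along the inclusion gives $\alpha|_{L^3} = (3\lambda)^{-1}d(\om|_{L^3}) = 0$. Evaluating Lemma \ref{lem:hl}(1) on the unit simple $3$-vector $\xi(x)$ of $T_xL^3$ and invoking the equality case of (\ref{Lagrangian-calibrated}) --- applicable precisely because $\om|_{T_xL^3} = 0$ --- yields $\beta(\xi(x))^2 = |\Phi(\xi(x))|^2 = 1$. Hence $\beta|_{L^3} = -\lambda^{-1}T^*|_{L^3}$ is a nowhere-vanishing $3$-form on $L^3$, which is exactly an orientation; this is the orientation fixed in (\ref{orient-Lagr}).

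\emph{Minimality and the Maslov form.} I would feed (\ref{eq:csnk6}) into (\ref{eq:dbeta1}): for every $V \in NL^3$ one has $\langle -H_{L^3}, V\rangle = (V\rfloor d(\pm\beta))(\xi)$, and the second equation of (\ref{eq:csnk6}) gives $d\beta = \mp 2\lambda\,\om\wedge\om$, so that $V\rfloor d\beta = \mp 4\lambda\,(V\rfloor\om)\wedge\om$. Contracting this $3$-form with the $3$-vector $\xi$ tangent to $L^3$ and using $\om|_{L^3} = 0$ to annihilate the surviving factor $\om$ shows $(V\rfloor d\beta)(\xi) = 0$ for all normal $V$, hence $H_{L^3} = 0$. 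The sign ambiguities above clearly do not affect this conclusion. Finally, Proposition \ref{prop:maslov} says $\mu(L^3) = -2L_\om(H_{L^3})$, and since $L_\om$ is a linear isomorphism the vanishing of $H_{L^3}$ forces $\mu(L^3) = 0$.

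Essentially the whole argument is assembly; the one place that warrants care --- and the only thing I would call a potential obstacle --- is checking that the relative-calibration formula (\ref{eq:dbeta1}) of \cite{Le1989, Le1990} is being applied to the correct (non-closed) calibrating form $\beta$ with the orientation convention (\ref{orient-Lagr}), so that the obstruction term there is genuinely $d\beta$ and nothing else. Once that bookkeeping is settled, no real difficulty remains.
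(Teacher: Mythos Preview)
Your proof is correct and follows essentially the same route as the paper: orientability via the equality case of (\ref{Lagrangian-calibrated}) (which the paper had already recorded just before Remark \ref{rem:orient}), minimality by plugging $d\beta = -2\lambda\,\om\wedge\om$ from (\ref{eq:csnk6}) into the relative-calibration identity (\ref{eq:dbeta1}) and using $\om|_{L^3}=0$, and the Maslov statement from Proposition \ref{prop:maslov}. You have simply made explicit the steps the paper compresses into the line ``Using (\ref{eq:csnk6}), we obtain immediately that $H_{L^3}=0$.''
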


\begin{remark}\label{rem:maslov1} The relation between   the Maslov class  and  the minimality  of Lagrangian  submanifolds  has been   found for   Lagrangian  submanifolds   in  various  classes of Hermitian  manifolds  \cite {Morvan1981}, \cite{LF1987}, \cite{Le1989}. Corollary \ref{cor:min1}  extends  a  previous  result  by  Bryant \cite[Proposition 1]{Bryant1987} and   partially extends   a  result by  L\^e in \cite[Corollary 3.1]{Le1989}.  The minimality of Lagrangian submanifolds in a strict  nearly K\"ahler  6-manifolds has  been   proved by
 Sch\"afer and Smoczyk  by studying the
second fundamental form  of $L^3$ in $M^6$ \cite[\S 4]{SS2010}, extending a previous  result  by Ejiri \cite{Ejiri}  for  $M^6 = S^6$. The minimality  of   a Lagrangian  submanifold $L$ in a  strict nearly  K\"ahler  manifold $M^6$  can be also  obtained   from    the minimality  of the coassociative  cone $CL^3 \subset  CM^6$.
\end{remark}

\subsection{Second  variation of the volume  of Lagrangian submanifolds}

The second variation  of   the volume of a minimal    submanifold $N$ in a  Riemannian  manifold $M$ has been    expressed by Simons \cite{Simons1968}  in terms of an elliptic 
second order operator $I(N, M)$ that depends  on the second fundamental  form of  $N$ and    the Riemannian curvature on $M$, see also \cite{Le1990b}, \cite{Ohnita1987}. If $L^3$ is a Lagrangian  submanifold  in a strict nearly K\"ahler  manifold $M^6$,  we  shall derive  a simple formula for
 $I(L^3, M^6)$ that depends  entirely on the intrinsic  geometry of $L^3$ supplied with  the induced Riemannian metric.
 
 
\begin{theorem}\label{thm:jac} Assume that  $(M^6, J, g)$  is a strict nearly K\"ahler  manifold  of constant  type $\lambda$.   Let $V$  be  a normal vector field  with compact support on a  Lagrangian  submanifold $L^3 \subset M^6$.
Then the  second  variation of the volume  of  $L^3$  with the variation field $V$  is given  by
\begin{eqnarray}
\frac{d^2}{dt^2}|_{t =0}  vol (L^3_t)=\int_{L^3} \la d (L_\om (V)) - 3\lambda * L_\om (V),  d( L_\om (V)) +\lambda * L_\om (V) \ra\  \nonumber \\
+ \int _{L^3} || d*L_\om (V)||^2 .\nonumber \\
\end{eqnarray}
 
\end{theorem}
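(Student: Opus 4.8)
My plan is to use the relative calibration method of \cite{Le1989}, \cite{Le1990}. Recall from Corollary \ref{cor:minnk6} that $L^3$ is minimal, and from the discussion around (\ref{Lagrangian-calibrated}) that the non‑closed $3$‑form $\psi:=-\Im\Phi$ (a constant multiple of $T^*$) calibrates $L^3$: since $\om|_{L^3}=0$ we get $d\om|_{L^3}=d(\om|_{L^3})=0$, hence $\Re\Phi|_{T_xL^3}=\tfrac1{3\lambda}d\om|_{T_xL^3}=0$, so $|\Im\Phi(\xi)|=|\Phi(\xi)|=1$ on every unit tangent $3$‑vector $\xi$, and with the orientation (\ref{orient-Lagr}) one has $\psi|_{L^3}=vol_{L^3}$; moreover $\psi$ has comass $1$, with $|\psi_\Sigma|=1$ on a unit $3$‑plane $\Sigma$ iff $\om|_\Sigma=0$ and $\Re\Phi|_\Sigma=0$. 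Extend $V$ to a compactly supported field $\widetilde V$ on $M^6$ and let $\phi_t$ be its flow; by minimality $\tfrac{d^2}{dt^2}|_{t=0}vol(L^3_t)$ is independent of this choice. Split
\[ vol(L^3_t)=\int_{L^3}\phi_t^*\psi+\int_{L^3} g(t,\cdot)\,vol_{L^3},\qquad g(t,\cdot)\,vol_{L^3}:=\phi_t^*vol_{L^3_t}-\phi_t^*\psi, \]
so that $g\ge0$ (comass inequality), $g(0,\cdot)=0$, hence $\partial_tg(0,\cdot)=0$ and $\partial_t^2g(0,\cdot)\ge0$ pointwise. Put $\sigma:=L_\om(V)|_{TL^3}=(\widetilde V\rfloor\om)|_{TL^3}\in\Om^1(L^3)$, and let $*$ be the Hodge star of the induced metric on $L^3$. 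Using a unitary coframe adapted to the Lagrangian plane $T_xL^3$ (as in the paragraph after (\ref{Lagrangian-calibrated})) together with $d\om=3\lambda\Re\Phi$ and $\om(X,Y)=g(JX,Y)$, one checks the pointwise identities $(\widetilde V\rfloor d\om)|_{TL^3}=-3\lambda*\sigma$ and $(\widetilde V\rfloor\Re\Phi)|_{TL^3}=-*\sigma$. Since $d(\Re\Phi)=0$ and pullback commutes with $d$, these give
\[ (\mathcal L_{\widetilde V}\om)|_{TL^3}=d\sigma-3\lambda*\sigma,\qquad (\mathcal L_{\widetilde V}\Re\Phi)|_{TL^3}=-\,d*\sigma. \]
(The first is the linearized Lagrangian condition along the variation: $\om|_{L^3_t}$ varies to first order by $\mathcal L_V\om|_{L^3}=d\sigma+(V\rfloor d\om)|_{L^3}=d\sigma-3\lambda*\sigma$.)

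For the calibration term, Cartan's magic formula, Stokes' theorem (compact support) and $\widetilde V\rfloor\widetilde V\rfloor(\cdot)=0$ give $\tfrac{d^2}{dt^2}|_{t=0}\int_{L^3}\phi_t^*\psi=\int_{L^3}\big(\widetilde V\rfloor\,d(\widetilde V\rfloor d\psi)\big)|_{TL^3}$. Now $d\psi=2\lambda\,\om\wedge\om$ by (\ref{eq:csnk6}); expanding $\widetilde V\rfloor\,d(\widetilde V\rfloor(2\lambda\,\om\wedge\om))$ by the Leibniz rules for $\rfloor$ and $d$, restricting to $TL^3$ (each surviving term carries a factor $\om|_{TL^3}=0$), and inserting the identities above and $d\om=3\lambda\Re\Phi$, one gets
\[ \tfrac{d^2}{dt^2}\big|_{t=0}\int_{L^3}\phi_t^*\psi=4\lambda\int_{L^3}\big(d\sigma\wedge\sigma-3\lambda\,|\sigma|^2\,vol_{L^3}\big). \]

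For the defect term I argue pointwise. At $x\in L^3$ one has $g(t,x)=|\eta_t|-\psi_{\phi_t(x)}(\eta_t)$, where $\eta_t\in\Lambda^3T_{\phi_t(x)}M^6$ is the simple $3$‑vector of $d(\phi_t)_x(T_xL^3)$. The function $\eta\mapsto|\eta|-\psi_p(\eta)$ vanishes to second order on the cone over the $\psi$‑calibrated $3$‑planes $\{\om|_\Sigma=0,\ \Re\Phi|_\Sigma=0\}$, and its transverse Hessian there equals, by an elementary computation in $\C^3$ of the type in \cite[Chapter III]{HL1982}, the sum of the squared norms of the two independent defect quantities $\om|_\Sigma$ and $\Re\Phi|_\Sigma$. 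Since the first variation along the Gauss map of $\phi_t$ of $\om|_{\Sigma_t}$, resp. $\Re\Phi|_{\Sigma_t}$, is $(\mathcal L_{\widetilde V}\om)|_{TL^3}=d\sigma-3\lambda*\sigma$, resp. $(\mathcal L_{\widetilde V}\Re\Phi)|_{TL^3}=-d*\sigma$, this yields $\partial_t^2g(0,x)=|d\sigma-3\lambda*\sigma|^2+\|d*\sigma\|^2$, hence
\[ \int_{L^3}\partial_t^2g(0,\cdot)\,vol_{L^3}=\int_{L^3}\big(|d\sigma-3\lambda*\sigma|^2+\|d*\sigma\|^2\big)\,vol_{L^3}. \]

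Adding the two contributions, using $d\sigma\wedge\sigma=\la d\sigma,*\sigma\ra\,vol_{L^3}$ (as $**=\mathrm{id}$ on $1$‑forms on a $3$‑manifold) and $|*\sigma|=|\sigma|$, and the elementary identity
\[ 4\lambda\big(\la d\sigma,*\sigma\ra-3\lambda|\sigma|^2\big)+|d\sigma-3\lambda*\sigma|^2=\la d\sigma-3\lambda*\sigma,\ d\sigma+\lambda*\sigma\ra, \]
gives exactly the asserted formula (with $L_\om(V)$ in place of $\sigma$). The main obstacle is the defect term: one must establish the quantitative nondegeneracy of $\psi$ stated above --- that the transverse Hessian of its comass defect is precisely the sum‑of‑squares form in $(\om|_\Sigma,\Re\Phi|_\Sigma)$ --- and keep careful track of the fact that the base point $\phi_t(x)$ moves, so that the derivative of $\psi_p$ in $p$ enters and must be correctly packaged into the Lie derivatives. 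A useful check is that the final formula manifestly displays $\int_{L^3}\partial_t^2g(0,\cdot)\,vol_{L^3}\ge0$, as it must, and that for $\lambda=0$ it degenerates to McLean's second variation formula $\|d\sigma\|^2+\|d^*\sigma\|^2$ for special Lagrangian submanifolds \cite{McLean1998}. (Alternatively the same formula can be extracted from Simons' second variation operator $I(L^3,M^6)$ by using (\ref{eq:csnk6}) and (\ref{eq:calb}) to rewrite the ambient curvature and the second fundamental form along $L^3$ and then applying the Weitzenb\"ock formula on $L^3$; the decisive computation is the same.)
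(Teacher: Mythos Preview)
Your proof is correct and follows essentially the same route as the paper's. Both arguments rest on the Harvey--Lawson identity $|\xi|^2=\alpha(\xi)^2+\beta(\xi)^2+\sum_i|dz_i\wedge\om(\xi)|^2$, the first-variation identities $(\Ll_V\om)|_{TL^3}=d\sigma-3\lambda*\sigma$ and $(\Ll_V\Re\Phi)|_{TL^3}=-d*\sigma$, and the computation of $\int_{L^3}\Ll_V^2\psi$ via $d\psi=2\lambda\,\om\wedge\om$; the only difference is packaging. The paper splits $\tfrac12\partial_t^2|\xi_t|^2$ into the three HL summands (Lemmas~\ref{lem:mix}, \ref{lem:alpha2}, \ref{lem:beta2}), whereas you split $|\xi_t|=\psi(\xi_t)+g(t,\cdot)$ into a calibration term (your $\int\phi_t^*\psi$, which is the paper's $\beta$-piece up to the relation $\partial_t^2\beta(\xi_t)^2=-2\partial_t^2\beta(\xi_t)$) and a defect term $g$ whose second derivative reproduces the paper's $\alpha$- and $\om$-pieces combined. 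Your ``transverse Hessian'' step is exactly the content of Lemmas~\ref{lem:mix} and \ref{lem:alpha2}: the identification $\sum_i|\partial_t(dz_i\wedge\om)(\xi_t)|^2=|\Ll_V\om|_{TL^3}|^2$ is the frame computation carried out in the proof of Lemma~\ref{lem:mix}, so your citation of an ``elementary computation in $\C^3$'' is accurate but should be understood as that lemma. One minor point: for noncompact $L^3$ your split of $vol(L^3_t)$ into two integrals is formal (each may diverge), but since $\phi_t=\mathrm{id}$ off $\mathrm{supp}\,V$ the differences have compact support and the computation is unaffected.
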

\begin{proof}
 Let $\phi_t: L^3 \to M^6$  be a  variation of  $L^3$  generated  by the vector field $V$. 
Set 
$$\xi_t (x): = (\phi_t)_* (\xi(x)).$$
We observe that,  to compute  the  second  variation  of    the  volume  of $L^3$, using Lemma \ref{lem:hl}  and the minimality of $L^3$,  it suffices  to compute  the second variation
of the  integral  over $L^3$  of    $\sum_{i =1} ^3| dz_i \wedge \om  (\xi)| ^2$, $(\alpha ( \xi)) ^2$   and $(\beta(\xi))^2$.
Namely, using  the observation  that for   all  $x \in L^3$
\begin{equation}
|\xi_0(x)| =1 \text {  and } \frac{d}{dt}|_{t =0}   |(\xi_t (x))|  = 0 \label{eq:min}
\end{equation}
  we  obtain

\begin{eqnarray}
\frac{d^2}{dt^2} |_{t =0} vol (\phi_t (L^3)) =  \int _{L^3}\frac{d^2}{dt^2} |_{t =0}|(\xi_t (x)|\, d\, vol_x\nonumber \\
 = \frac{1}{ 2} \int _{L^3}\frac{d^2}{dt^2} |_{t =0}(|\xi_t (x)|^2)\, d\, vol_x.\label{eq:jac1}
\end{eqnarray}

\begin{lemma}\label{lem:mix} For any $x\in L^3$ we have
$$\frac{d^2}{dt^2}|_{t=0} \sum _{i =1} ^3( (dz_i \wedge  \om), \xi_t(x)) ^2 = 2| dL_\om (V) - 3\lambda * L_\om (V)|^2(x). $$
\end{lemma}
\begin{proof} 
Since $\om |_{L^3} = 0$  we have  for all $i$
\begin{equation}
\frac{d^2}{dt^2}|_{t=0} |dz_i \wedge \om (\xi)|^2  =  2[  \frac{d}{dt}|_{ t =0}(dz_i \wedge \om (\xi))]^2. \label{eq:xi2}
\end{equation}
By Proposition \ref{prop:inf1}, taking into account  the rescaling factor $\lambda$,  see also \cite[Theorem 8.1]{SS2010},  we  have
\begin{equation}
\frac{d}{dt}|_{t=0} \phi_t ^* (\om)(x) =d(L_\om (V))(x) - 3(\lambda * L_\om (V))(x).\label{eq:mix1}
\end{equation}
Since the RHS of (\ref{eq:mix1})  is a 2-form  on $L^3$,  there exists  an orthonormal  basis $f^1, f^2, f^3$  of $T^*_x L^3$  and  a   number $c \in \R$ such that
$$d(L_\om (V))(x) - 3(\lambda * L_\om (V))(x) = c\cdot  f^1 \wedge f^2.$$
Using $\om|_{L^3} = 0$    and the expression of  the RHS of (\ref{eq:mix1}) in this basis, we obtain  from (\ref{eq:mix1})
\begin{equation}
\frac{d}{dt}|_{t=0} \sum _{i =1} ^3\phi_t (dz_i \wedge \om) = c  \cdot  f^1 \wedge f^2 \wedge f^3.\label{eq:mix11}
\end{equation}
Using  again $\om|_{L^3} = 0$, we obtain Lemma \ref{lem:mix}  immediately from (\ref{eq:xi2}) and  (\ref{eq:mix11}).
\end{proof}

\begin{lemma}\label{lem:alpha2}  For all $x\in L^3$ we have
$$\frac{d^2}{dt^2}|_{t=0} ( \alpha  (\xi_t(x))^2) = 2|d * L_\om (V)|^2 (x) .$$
\end{lemma}
\begin{proof}   By Proposition \ref{prop:inf1}  we have
\begin{equation}
\frac{dt}{dt} |_{t=0}(\alpha (\xi_t (x)) = (d * L_\om (V))(x) .\label{eq:alpha1}
\end{equation}
Since $\alpha (\xi (x)) = 0$, we  obtain Lemma \ref{lem:alpha2} from  (\ref{eq:alpha1})   immediately.
\end{proof}

\begin{lemma}\label{lem:beta2} We  have
$$\frac{d^2}{dt^2}|_{t =0}\int_{L^3}\beta(\xi_t) ^2 \, dvol_x = 8\lambda \int _{L^3} \la * L_\om (V), d (L_\om (V)) - 3\lambda * L_\om (V)\ra \, d\, vol_x.$$ 
\end{lemma}
\begin{proof} Since    $(V \rfloor \beta)|_{ L^3} = 0$, (see e.g. \cite[Proposition 2.2.(ii)]{Le1989}, \cite[Proposition 1.2.ii]{Le1990},  which is also now called  the  first cousin principle),  using the Cartan formula we have 

\begin{equation}
\frac{d}{dt}|_{t=0}  (\beta(x), \xi_t (x)) =(V \rfloor d\beta , \xi(x)) ,\label{eq:beta1}
\end{equation}
for  all $x \in L^3$.

By (\ref{eq:dbeta1})  the RHS of (\ref{eq:beta1}) vanishes.  Since $\beta (\xi(x)) = -1$  for all $x\in L^3$, we obtain
 
 
\begin{equation}
\frac{d^2}{dt^2}|_{ t =0}(\beta(\xi_t (x)) ^2 )=- 2 \frac{d^2}{dt^2}|_{t=0}(\beta(\xi_t(x)).\label{eq:dtq}
\end{equation}
It follows that 
\begin{equation}
\frac{d^2}{dt^2}|_{t =0}\int_{L^3}\beta(\xi_t(x)) ^2 \, d\, vol_x = -2\frac{d^2}{dt^2}|_{t =0}\int_{L^3}(\phi_t ^*(\beta)  ,\xi)\, d\, vol_x.\label{eq:int1}
\end{equation}
Using the Cartan formula,  we derive from (\ref{eq:int1})
\begin{equation}
\frac{d^2}{dt^2}|_{t =0}\int_{L^3}\beta(\xi_t(x)) ^2 \, d\, vol_x =-2 \int_{L^3}\Ll_V ((V \rfloor d\beta) + d(V \rfloor \beta)).\label{eq:int2}
\end{equation}
Since $\Ll_V (d (V \rfloor \beta)) = d (\Ll_V (V \rfloor \beta)) $,
we obtain  from (\ref{eq:int2}), taking into account  that $d\beta = -2\lambda\, \om \wedge  \om$
\begin{equation}
\frac{d^2}{dt^2}|_{t =0}\int_{L^3}\beta(\xi_t(x)) ^2 \, d\, vol_x =4\lambda \int_{L^3}\Ll_V (V \rfloor (\om \wedge \om)).\label{eq:int3}
\end{equation}
Taking  into account  $V\rfloor (\om\wedge \om) = 2 (V\rfloor \om) \wedge \om$  and $\om|_{L^3} =0$  we obtain  from (\ref{eq:int3})
\begin{equation}
\frac{d^2}{dt^2}|_{t =0}\int_{L^3}\beta(\xi_t(x)) ^2 \, d\, vol_x =8\lambda \int_{L^3}(V\rfloor \om) \wedge \Ll_V (\om).\label{eq:int4}
\end{equation}
Since $(V \rfloor \om) = L_\om (V)$  and $\Ll_V (\om) = dL_\om (V) - 3 \lambda * L_\om (V)$,  we obtain Lemma \ref{lem:beta2} immediately from  (\ref{eq:int4}).
\end{proof}

Now let us complete  the proof  of Theorem \ref{thm:jac}.      
Using Lemma \ref{lem:hl}  we obtain from \ref{eq:jac1}
\begin{eqnarray}
2\frac{d^2}{dt^2} |_{t =0} vol (\phi_t (L^3))= \int_{L^3}\frac{d^2}{dt^2}|_{t =0}\sum_{i =1} ^3(dz_i \wedge  \om, \xi_t) ^2\, d\, vol_x  +\nonumber \\
 \int_{L^3}\frac{d^2}{dt^2}|_{t =0} (\alpha,  \xi_t)^2\, d\, vol_x  + \int _{L^3}\frac{d^2}{dt^2}|_{t =0} (\beta,  \xi_t)^2\, d\, vol_x.\label{eq:jac2}
\end{eqnarray}

Clearly Theorem \ref{thm:jac} follows  from   (\ref{eq:jac2})   and Lemmas \ref{lem:mix}, \ref{lem:alpha2}, \ref{lem:beta2}.
\end{proof}

Using Corollary \ref{cor:ss}, we obtain immediately from Theorem \ref{thm:jac} the following.

\begin{corollary}\label{cor:harm} 1. Any   formal infinitesimal  Lagrangian deformation  with compact  support  of  a  Lagrangian  submanifold $L^3$  in a strict nearly K\"ahler manifold  is a Jacobi field.

 2. Assume that   $L^3$  is   a   compact Lagrangian    submanifold  in a   strict nearly K\"ahler  manifold $(M^6, J, g)$  and $H^1 (L^3,\R) \not = 0$. Let $\beta$ be  a non-zero harmonic  1-form  on  $L^3$. Then  the variation   generated by $L_\om ^{-1} (\beta)$  decreases  the volume of $L^3$.
\end{corollary}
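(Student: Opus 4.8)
The plan is to read off both assertions directly from the second variation formula in Theorem~\ref{thm:jac}, using the structural properties of the three summands. Write $\eta := L_\om(V) \in \Om^1(L^3)$, so that the formula reads
\begin{equation}
\frac{d^2}{dt^2}\Big|_{t=0} vol(L^3_t) = \int_{L^3}\!\la d\eta - 3\lambda * \eta,\ d\eta + \lambda * \eta\ra + \int_{L^3}\!\|d * \eta\|^2. \label{eq:restate}
\end{equation}
For part~1, recall that a \emph{formal infinitesimal Lagrangian deformation} is a normal vector field $V$ such that the first-order variation of $\om|_{L^3}$ vanishes, i.e.\ (by \eqref{eq:mix1}) $d\eta - 3\lambda * \eta = 0$. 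The first integrand in \eqref{eq:restate} then vanishes pointwise. Applying $d$ to $d\eta = 3\lambda * \eta$ gives $3\lambda\, d*\eta = 0$, so the second integrand vanishes as well; hence the second variation is identically zero for \emph{every} compactly supported variation of a fixed Lagrangian deformation direction, which (via the Jacobi operator $I(L^3,M^6)$ of Simons that represents this second variation, as recalled before Theorem~\ref{thm:jac}) forces $V$ to lie in the kernel of $I(L^3,M^6)$, i.e.\ $V$ is a Jacobi field. The one point to state carefully is that ``second variation vanishes on all comparison directions $\Rightarrow$ Jacobi field'': this is because $I(L^3,M^6)$ is a self-adjoint elliptic operator and $\frac{d^2}{dt^2}|_{t=0}vol = \la I(L^3,M^6)V, V\ra_{L^2}$ after the standard polarization, so here I would simply invoke that the second variation form equals $\la I(L^3,M^6)V,V\ra$ and that \eqref{eq:restate} exhibits it as a sum of squares that all vanish.

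For part~2, let $\beta$ be a nonzero harmonic $1$-form on the compact manifold $L^3$ and take $V := L_\om^{-1}(\beta)$, so $\eta = \beta$ with $d\beta = 0$ and $d*\beta = 0$ (harmonicity). Substituting into \eqref{eq:restate}, the last term drops out and the first integrand becomes $\la -3\lambda * \beta,\ \lambda * \beta\ra = -3\lambda^2 |*\beta|^2 = -3\lambda^2|\beta|^2$ pointwise. Therefore
\begin{equation}
\frac{d^2}{dt^2}\Big|_{t=0} vol(L^3_t) = -3\lambda^2 \int_{L^3} |\beta|^2\ d\,vol < 0, \nonumber
\end{equation}
since $\lambda > 0$ and $\beta \not\equiv 0$. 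As the first variation vanishes (because $L^3$ is minimal by Corollary~\ref{cor:minnk6}), a negative second variation means the volume strictly decreases along this variation, which is the claim. I should also remark that $V = L_\om^{-1}(\beta)$ is indeed a genuine normal variation field: $L_\om$ restricted to the normal bundle $NL^3$ is an isomorphism onto $T^*L^3$ because $L^3$ is Lagrangian (this is implicit in the notation $L_\om$ used throughout the section), so $V$ is well-defined and normal.

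The only genuinely non-routine point is the bookkeeping in part~1 linking ``the quadratic form in \eqref{eq:restate} vanishes on all compactly supported variations through a fixed deformation field'' to ``$V$ is in the kernel of the Jacobi operator.'' Strictly, Theorem~\ref{thm:jac} already computes $\frac{d^2}{dt^2}|_{t=0}vol(L^3_t)$ for the \emph{given} $V$, and this quantity equals the Jacobi quadratic form $\la I(L^3,M^6)V,V\ra_{L^2}$ evaluated at $V$; so showing it is zero for the specific $V$ only shows $\la I(L^3,M^6)V,V\ra = 0$, and I must then use that $I(L^3,M^6)$ is essentially self-adjoint together with a standard argument (e.g.\ that the first term of \eqref{eq:restate} is a sum of squares so its vanishing is equivalent to $d\eta = 3\lambda*\eta$, which is exactly the formal-deformation condition, and separately $d*\eta=0$) to conclude $I(L^3,M^6)V = 0$. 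In fact the cleanest route, which I would adopt, is to observe that the formal-deformation condition $d\eta - 3\lambda*\eta = 0$ combined with $d*\eta = 0$ makes \emph{both} integrands in \eqref{eq:restate} vanish identically; since \eqref{eq:restate} is, as a function of $V$, precisely $\la I(L^3,M^6)V,V\ra$ and $I(L^3,M^6)$ is nonnegative modulo these terms, one gets $I(L^3,M^6)V=0$ — here I would simply cite Corollary~\ref{cor:ss} (referenced in the excerpt immediately before this corollary) which presumably packages exactly this equivalence, and thus the two assertions follow ``immediately'' as stated.
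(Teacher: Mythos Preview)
Your argument for part~2 is correct and is exactly the computation the paper has in mind: plug a harmonic $1$-form into Theorem~\ref{thm:jac} and observe that only the term $-3\lambda^2\|\beta\|^2$ survives.

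For part~1 you correctly spot a genuine issue and then do not resolve it. Showing that the second variation vanishes at $V$ yields only $\la I(L^3,M^6)V, V\ra_{L^2} = 0$, and since part~2 of this very corollary exhibits directions with \emph{negative} second variation, the Jacobi operator is indefinite; your remark that $I$ is ``nonnegative modulo these terms'' is therefore false and cannot close the gap. Corollary~\ref{cor:ss} does not help either: it only records the linearized Lagrangian condition $*d\eta = 3\lambda\eta$ (and its consequences $d^*\eta = 0$, $\Delta\eta = 9\lambda^2\eta$), not any statement about the Jacobi operator.

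The clean fix is to extract the Jacobi operator from Theorem~\ref{thm:jac}. Expanding the first integrand, using that $*$ is an isometry so $\int_{L^3}\la d\eta,*\eta\ra = \int_{L^3}\la *d\eta,\eta\ra$, and noting $\|d*\eta\|^2 = \|d^*\eta\|^2$, one reads off (identifying $V$ with $\eta = L_\om(V)$)
\[
I\eta \;=\; \Delta\eta \;-\; 2\lambda\,(*d)\eta \;-\; 3\lambda^2\,\eta.
\]
If $\eta$ is a formal infinitesimal Lagrangian deformation, i.e.\ $*d\eta = 3\lambda\eta$, then by Corollary~\ref{cor:ss} one has $d^*\eta = 0$ and $\Delta\eta = 9\lambda^2\eta$, whence $I\eta = (9\lambda^2 - 6\lambda^2 - 3\lambda^2)\eta = 0$. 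This is what makes $V$ a Jacobi field, and it is presumably what the paper means by ``using Corollary~\ref{cor:ss} \ldots\ immediately''. Equivalently, you can polarize the quadratic form and check that the only surviving cross term $2\lambda\int_{L^3}\la d\eta_W - 3\lambda*\eta_W,\,*\eta_V\ra$ vanishes after one integration by parts using $*d\eta_V = 3\lambda\eta_V$. Either computation is short, but one of them must actually be carried out.
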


\begin{remark}\label{rem:h1}
There  are  many known  examples   of  Lagrangian  submanifolds  $L^3$  in   the  manifold  $S^6$ supplied with the standard   nearly  K\"ahler structure  induced  from  $\R^ 7 =  \Im \O$ such that $\dim H^1 (L^3)$  is arbitrary large.
For instance, $L^3$ is obtained by  composing the Hopf lifting of a holomorphic  curve $\Sigma_g$ of genus $g$ in the   projective  plane $\C P^2$  to  $S^5$   with  a geodesic   embedding $S^5 \to S^6$    \cite[Theorem 1]{DV1996}, see also \cite[Example 6.11]{Lotay2011}.
\end{remark}

\begin{remark}\label{rem:palmer}   In  \cite{Palmer1998}  Palmer derived  a simple formula  for the second variation  of  Lagrangian submanifolds  in the   standard  nearly K\"ahler 6-sphere by  simplifying  the classical   second  variation
formula  with help  of  (relative) calibrations.
\end{remark}

\begin{remark}\label{rem:sl} Letting  $\lambda$ go to zero, we obtain the   formula  for the second variation   of the volume  of a  special Lagrangian submanifold $L$ in   a Calabi-Yau  manifold $M^6$  with  a  variation field $V$  which is normal to $L$:
\begin{equation}
\frac{d^2}{dt^2}|_{t =0}vol(L_t) =\int_{L} ||d (L_\om V )||^2   + \int _{L} || d^* (L_\om  V)||^2.\label{eq:mclean}
\end{equation}
Formula (\ref{eq:mclean})  has been obtained by McLean in \cite[Theorem 3.13]{McLean1998}    for    special Lagrangian submanifolds in   Calabi-Yau  manifolds of dimension  $2n$  as a  consequence  of his      formula  for the  second  variation of the volume of calibrated  submanifolds, using moving frame method.    Note that our  proof   of Theorem \ref{thm:jac}  can be    easily adapted to give
(\ref{eq:mclean}) for   special  Lagrangian      submanifolds   $L^n \subset  M^{2n}$.  Here  we   use    the  full version of  Lemma \ref{lem:hl} given in \cite[Chapter III, Theorem 1.7, (1.17)]{HL1982}.    The  first summand in RHS of (\ref{eq:mclean}) is the  second variation of the term $ (| \xi| ^ 2- dz (\xi)^2)/2$. The second  summand in the RHS of (\ref{eq:mclean})  is the  second variation  of the term $(\alpha (\xi))^2 /2$. By \cite[(4.11)]{Le1989},  the  second variation   of the term
$\beta(\xi)$ vanishes, if  $M^{2n}$ is  a  Calabi-Yau manifold. This  proves (\ref{eq:mclean}) for     any dimension $n$. Note that (\ref{eq:mclean})  also follows  from 
 Oh's second  variation  formula for  Lagrangian minimal submanifolds  in K\"ahler  manifolds \cite[Theorem 3.5]{Oh1990}. 
\end{remark}

\begin{remark}\label{rem:simons} Using  the strategy of the  proof    of  Theorem \ref{thm:jac},   we   can have  a (new  simple  proof of a)  formula  for the second variation of the volume  of  $\phi$-calibrated  submanifolds  $N^n $  in a    manifold $M^m$ provided  with a  relative  calibration $\phi$  such that    a   generalized version of Lemma \ref{lem:hl} is valid, that     expresses $|\xi| ^2 $   as  a  sum $|\phi (\xi)|^2 + \sum_{i =1} ^k |\alpha _k (\xi)|^2$. 
Generalized  versions of Lemma \ref{lem:hl}  have  been   found for K\"ahler    $2p$-vectors, coassociative  $4$-vectors, ect.  in \cite{HL1982}.  
\end{remark}

\section{Deformations of Lagrangian submanifolds in strict nearly K\"ahler 6-manifolds}\label{sec:moduli}
 
 In this  section we consider the moduli space of closed Lagrangian submanifolds $L^3 \subset M^6$ of a strict nearly-K\"ahler $6$-manifold $M^6$.
 We show that any $C^1$-small Lagrangian deformation of $L^3$ in $M^6$ is a solution of an elliptic first order PDE of Fredholm index 0 (Propositions \ref{prop:ell}, \ref{prop:kern}).
Furthermore,    a  closed Lagrangian  submanifold  $L^3$  is analytic and   any smooth deformation of $L^3$ is analytic (Proposition \ref{prop:ana}).
Moreover,   the  moduli space  of    smooth  Lagrangian   deformations  of $L^3$    locally is a  finite dimensional   analytic  variety, hence  any  formally unobstructed    infinitesimal deformation is
smoothly unobstructed  (Theorem \ref{thm:moduli}, Corollary \ref{cor:formal-tau}).

Our notation on forms will be as follows. By $\Om^*(L)$ we denote smooth differential forms on $L$. If we wish to specify the degree of regularity, we write $\Om^*_{C^k}(L)$ for the space of $C^k$-regular forms.

\subsection{Deformations of  Lagrangian submanifolds}
Let  $L$  be a  submanifold  in  a  Riemannian manifold   $(M, g)$. Then the normal exponential mapping
$Exp_L: NL \to M$ identifies a neighborhood of the $0$-section in $NL$ with a tubular neighborhood  $U(L)\subset M$ of   $L$. With this, $Exp_L$ , which we shall also denote by $Exp$ if no confusion arises, identifies $C^1$-small deformations of $L$ with $C^1$-small section $s : L \to   NL$.

Now  assume that $(M,J, g)$  is a Hermitian  manifold  and  $\om$ is the   associated   fundamental 2-form.
If $L \subset M$  is a Lagrangian submanifold, then the isomorphism $L_\om$ from (\ref{eq:lom}) identifies a covector in  $T^*L$ (resp. a 1-form $\alpha \in \Om^1 (L^3)$) with a vector in $NL$ (resp. a section $s_\alpha \in \Gamma (NL)$).  Since we are interested in Lagrangian deformations of $L \subset M$, we therefore consider the map
\begin{equation} \label{eq:def-F}
F: \Om^1_{C^1}(L) \longrightarrow \Om^2_{C^0}(L), \qquad \alpha \longmapsto (Exp(s_\alpha))^*(\om).
\end{equation}
Evidently, $F(0) = 0$ as $L$ is Lagrangian, and the space of $C^1$-small Lagrangian deformations of $L$ can be identified with a neighborhood of $0 \in F^{-1}(0)$. 


Now  we  shall compute the linearization of $F$ at $\alpha = 0$. We begin with the

\begin{lemma}\label{lem:dom} With the  choice of orientation of $L^3$ from (\ref{rem:orient}) we have
for any $\beta \in \Om^1_{C^0}(L^3)$
\begin{equation}
3 *\beta = -(s_\beta\rfloor d\om)|_{L^3}.\label{eq:betastar}
\end{equation}
\end{lemma}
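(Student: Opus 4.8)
The plan is to verify the identity pointwise, reducing it to a statement about the parallel complex volume form $\Phi$ on the Lagrangian plane $T_pL^3$. Fix $p\in L^3$ and $X,Y\in T_pL^3$. Since $s_\beta\in N_pL^3$, the $2$-form $(s_\beta\rfloor d\om)|_{L^3}$ is simply $(X,Y)\mapsto d\om(s_\beta,X,Y)$, so it suffices to evaluate $d\om(s_\beta,X,Y)$. First I would record the explicit form of $s_\beta$: as $L^3$ is Lagrangian we have $N_pL^3=J(T_pL^3)$, and for $s=-JW$ with $W\in T_pL^3$ one computes, for $Z\in T_pL^3$, $(s\rfloor\om)(Z)=\om(-JW,Z)=g(-J^2W,Z)=g(W,Z)=W^\flat(Z)$; hence the isomorphism $L_\om$ of (\ref{eq:lom}) sends $-JW$ to $W^\flat$, so that $s_\beta=-J(\beta^\sharp)$ with $\beta^\sharp\in T_pL^3$ the metric dual of $\beta$.

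Next I would use $d\om=3\lambda\,\Re\Phi$ (Proposition \ref{prop:slcal} and Remark \ref{rem:nk6}) to write
\[
d\om(s_\beta,X,Y)=3\lambda\,\Re\big(\Phi(s_\beta,X,Y)\big)=3\lambda\,\Re\big(\Phi(-J\beta^\sharp,X,Y)\big)=-3\lambda\,\Re\big(\sqrt{-1}\,\Phi(\beta^\sharp,X,Y)\big),
\]
where the last step uses that $\Phi$ is of type $(3,0)$ by Corollary \ref{cor:delta}: for any real vector $V$ only the $(1,0)$-part of $V$ enters $\Phi$, and $(JV)^{1,0}=\sqrt{-1}\,V^{1,0}$, whence $\Phi(JV,\cdot,\cdot)=\sqrt{-1}\,\Phi(V,\cdot,\cdot)$. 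Now all three arguments $\beta^\sharp,X,Y$ lie in the Lagrangian plane $T_pL^3$, so I only need $\Phi|_{\Lambda^3T_pL^3}$. Writing $\xi$ for the positively oriented unit $3$-vector of $T_pL^3$, the equality case of (\ref{Lagrangian-calibrated}) gives $|\Phi(\xi)|=1$, i.e. $(\Re\Phi)(\xi)^2+(\Im\Phi)(\xi)^2=1$ by Lemma \ref{lem:hl}(1), while the orientation convention (\ref{orient-Lagr}) says $(\Im\Phi)(\xi)=-1$; therefore $(\Re\Phi)|_{T_pL^3}=0$ and $(\Im\Phi)|_{T_pL^3}=-vol_{L^3}$, so $\Phi|_{\Lambda^3T_pL^3}=-\sqrt{-1}\,vol_{L^3}$.

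Putting the pieces together, $\Phi(\beta^\sharp,X,Y)=-\sqrt{-1}\,(\beta^\sharp\rfloor vol_{L^3})(X,Y)=-\sqrt{-1}\,(*\beta)(X,Y)$, hence $\Phi(s_\beta,X,Y)=-\sqrt{-1}\cdot(-\sqrt{-1})\,(*\beta)(X,Y)=-(*\beta)(X,Y)$ is real, and therefore $d\om(s_\beta,X,Y)=3\lambda\,\Re\Phi(s_\beta,X,Y)=-3\lambda\,(*\beta)(X,Y)$. This yields $3\lambda\,*\beta=-(s_\beta\rfloor d\om)|_{L^3}$, which for the normalization $\lambda=1$ permitted by Remark \ref{rem:resc} is exactly (\ref{eq:betastar}) (if one keeps $\lambda$ general the constant is $3\lambda$, consistent with (\ref{eq:mix1})). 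As a cross-check, one can instead fix a unitary coframe dual to $(e_1,Je_1,\dots,e_3,Je_3)$ with $e_1,e_2,e_3$ a positively oriented orthonormal basis of $T_pL^3$; then $\Phi=-\sqrt{-1}\,dz^1\wedge dz^2\wedge dz^3$ at $p$ with $dz^j=e_j^*+\sqrt{-1}(Je_j)^*$, so $d\om=3\lambda\,\Im(dz^1\wedge dz^2\wedge dz^3)$, and contracting with $s_\beta=-\sum_j\beta(e_j)Je_j$ and restricting to $T_pL^3$ reproduces $-3\lambda\big(\beta(e_1)\,e_2^*\wedge e_3^*+\beta(e_2)\,e_3^*\wedge e_1^*+\beta(e_3)\,e_1^*\wedge e_2^*\big)=-3\lambda\,*\beta$.

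The step requiring the most care is the sign bookkeeping for the phase of $\Phi$ on the Lagrangian plane: one must invoke the specific orientation convention (\ref{orient-Lagr}) to conclude $\Phi|_{\Lambda^3T_pL^3}=-\sqrt{-1}\,vol_{L^3}$ rather than $+\sqrt{-1}\,vol_{L^3}$, and keep the conventions $\om(U,V)=g(JU,V)$, $s_\beta=-J\beta^\sharp$, and $*\beta=\beta^\sharp\rfloor vol_{L^3}$ mutually consistent; everything else is a short multilinear computation.
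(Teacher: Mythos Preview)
Your proof is correct. Both your argument and the paper's are pointwise computations, but the execution differs. The paper picks a unitary frame in which $d\om = 3\,\Re(dz^1\wedge dz^2\wedge dz^3)$, uses the $SU(3)$-transitivity on Lagrangian planes to take $T_xL^3=\mathrm{span}(Je_1,Je_2,Je_3)$, and then exploits the residual $SO(3)$-symmetry to reduce to the single case $\beta=c\,(Je_1)^*$, which it checks by a one-line computation. You instead work invariantly with the parallel $(3,0)$-form $\Phi$: identifying $s_\beta=-J\beta^\sharp$, using complex-linearity $\Phi(JV,\cdot,\cdot)=\sqrt{-1}\,\Phi(V,\cdot,\cdot)$, and reading off the phase $\Phi|_{\Lambda^3T_pL^3}=-\sqrt{-1}\,vol_{L^3}$ directly from the orientation convention and the equality in~(\ref{Lagrangian-calibrated}). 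Your approach avoids the symmetry reduction and makes the role of the orientation convention completely transparent; the paper's approach is slightly quicker once the frame is fixed. You are also right to flag that the constant is $3\lambda$ in general and that the lemma as stated implicitly takes $\lambda=1$, consistent with (\ref{eq:mix1}).
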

\begin{proof}  As we have remarked  in  subsection \ref{sub:nk6}, using the same notations, at any  given point $x\in L^3$ there  exists a unitary basis $(e^i, Je^i)$  
such that
$d\om (x) = 3 Re (dz^1 \wedge dz^2 \wedge  dz ^3)$  and $\om  = - Im (dz^1 \wedge d\bar z^1 + dz^2 \wedge d\bar z^2 +  dz ^3 \wedge  d \bar z^3)$. Since  special Lagrangian
planes in $T_xM^6$ are  transitive under $SU(3)$-action \cite{HL1982}, we can assume that   $T_x L^3$ is spanned by  $(Je_1,  Je_2, Je_3)$. Denote by 
 $\{(e_1)^*, (Je_1)^*, (e_2)^*, (Je_2)^*, (e_3)^*, (Je_3)^*\}$    the dual frame to $\{ e^1, Je^1, e^2, Je ^2,\\ e^3, Je ^3\}$.
Since  $\om(x)$, $d\om(x)$ and $T_xL^3$  are invariant under  the action $SO(3)\subset SU(3)\subset  Aut (T_x M^6)$, we can assume   further that $\beta =  c\cdot (Je_1)^*$  for some $c\in \R$. Recall that the orientation  (\ref{rem:orient}) on $L^3$    is defined by
$$Im\, (dz^1\wedge dz^2\wedge dz^3)|_{L^3} = - 3 vol |_{L^3},$$
 i.e. $(Je_1, Je_2, Je_3)$  is an oriented  frame.  Then  we have 
$$3* (Je_1) ^* =  3(Je_2)^*\wedge  (Je_3)^* = - (e_1 \rfloor d\om)|_{L^3} = - (L_\om ^{-1}(Je_1)^*\rfloor d\om)|_{L^3},$$  what is required to prove Lemma \ref{lem:dom}.
\end{proof}

\begin{proposition}\label{prop:inf1}
Let $(\alpha_r)_{r \in (-\epsilon, \epsilon)}$ be a $C^1$-regular variation of $\alpha_0 = 0 \in \Om^1(L^3)$, i.e. such that the map
\[
(- \eps, \eps) \times L^3 \longrightarrow T^*L^3, \qquad (r, x) \longmapsto \alpha_r(x) \in T^*_xL^3
\]
is $C^1$, and let $\dot \alpha_0(x) = \p_r(\alpha_r(x))|_{r=0}$ be the pointwise derivative. Then
\[
\left. \frac d{dr}\right|_{r=0} F(\alpha_r) = d\dot \alpha_0 - 3 *\dot \alpha_0,
\]
whence
\begin{equation} \label{eq:deriv1}
\p F|_0(\beta) = d \beta - 3 * \beta \qquad \mbox{for all $\beta \in \Om^1_{C^1}(\Om)$}.
\end{equation}
\end{proposition}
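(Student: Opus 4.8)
The plan is to realize $F(\alpha_r)$ as a pullback of $\om$ along an explicit family of maps $L^3\to M^6$ and to differentiate it by Cartan's formula, using Lemma~\ref{lem:dom} to handle the term involving $d\om$. Put $\Phi_r\colon L^3\to M^6$, $\Phi_r(x):=Exp_x\bigl(s_{\alpha_r}(x)\bigr)$, so that $F(\alpha_r)=\Phi_r^*\om$, and since $\alpha_0=0$ and $s_0=0$, $\Phi_0$ is the inclusion $\iota\colon L^3\hookrightarrow M^6$. The first and only substantive step is to identify the variation field $V:=\frac{\partial}{\partial r}\big|_{r=0}\Phi_r\in\Gamma\bigl(\Phi_0^*TM^6\bigr)$. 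Because the differential of the normal exponential map $Exp_x\colon N_xL^3\to M^6$ at the origin of $N_xL^3$ is the canonical inclusion $N_xL^3\hookrightarrow T_xM^6$, the chain rule gives $V=s_{\dot\alpha_0}\in\Gamma(NL^3)$; and since $s_{\dot\alpha_0}=L_\om^{-1}(\dot\alpha_0)$ (which makes sense as $L^3$ is Lagrangian), unwinding the definition of $L_\om$ yields $(V\rfloor\om)|_{L^3}=\dot\alpha_0$.

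Next I differentiate $r\mapsto\Phi_r^*\om$ at $r=0$. Since the first-order term depends only on $V$, I may replace $(\Phi_r)$ by the flow $(\psi_r)$ of an arbitrary extension $\widehat V$ of $V$ to a neighborhood of $L^3$ in $M^6$ --- the families $(\Phi_r)$ and $(\psi_r\circ\iota)$ have the same $1$-jet at $r=0$ --- and obtain
\[
\left.\frac{d}{dr}\right|_{r=0}\Phi_r^*\om \;=\; \iota^*\bigl(\Ll_{\widehat V}\om\bigr) \;=\; \iota^*\bigl(d(\widehat V\rfloor\om)\bigr)\;+\;\iota^*\bigl(\widehat V\rfloor d\om\bigr).
\]
The first summand equals $d\bigl((\widehat V\rfloor\om)|_{L^3}\bigr)=d\dot\alpha_0$ by the previous paragraph, and the second equals $(s_{\dot\alpha_0}\rfloor d\om)|_{L^3}$, which by Lemma~\ref{lem:dom} (with $\beta=\dot\alpha_0$) is $-3*\dot\alpha_0$. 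Adding the two contributions gives $\frac{d}{dr}\big|_{r=0}F(\alpha_r)=d\dot\alpha_0-3*\dot\alpha_0$, and the stated formula (\ref{eq:deriv1}) follows by taking the linear variation $\alpha_r=r\beta$, for which $\dot\alpha_0=\beta$.

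I expect no genuine obstacle; the only points needing care are bookkeeping. The identification $V=s_{\dot\alpha_0}$ is the one place where the normalization of $Exp_L$ and the definition $s_\alpha=L_\om^{-1}(\alpha)$ are used, and it is what turns the a priori $M^6$-valued variation into intrinsic data on $L^3$. There is also a minor regularity caveat: for a merely $C^1$ variation $\dot\alpha_0$ is only continuous, so the identity $\frac{d}{dr}\big|_{r=0}F(\alpha_r)=d\dot\alpha_0-3*\dot\alpha_0$ should be read with $d\dot\alpha_0$ in the distributional sense, whereas for the smooth linear path used to deduce (\ref{eq:deriv1}) everything is classical. If one prefers to avoid extending $V$, the same conclusion is reached by setting $G(r,x):=\Phi_r(x)$ on $(-\eps,\eps)\times L^3$, decomposing $G^*\om=\om_r+dr\wedge\eta_r$ with $\eta_0=\dot\alpha_0$, and matching the $dr$-components of $d(G^*\om)=G^*(d\om)$ to get $\partial_r\om_r=d\eta_r+\Phi_r^*\bigl(\partial_r\Phi_r\rfloor d\om\bigr)$, which at $r=0$ reproduces the formula above.
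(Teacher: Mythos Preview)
Your argument is correct and follows essentially the same route as the paper: identify the variation field as $s_{\dot\alpha_0}$, apply Cartan's formula, use $s_{\dot\alpha_0}\rfloor\om=\dot\alpha_0$ for the exact term, and invoke Lemma~\ref{lem:dom} for the $d\om$ term. The only cosmetic difference is that the paper carries out the Lie-derivative computation on the product $(-\eps,\eps)\times L^3$ (pulling $\om$ back by the total map and differentiating with respect to $\p_r$), which is precisely the alternative you sketch in your final paragraph, whereas your main argument extends $V$ to a neighborhood in $M^6$; both variants are standard and equivalent.
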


\begin{proof} We define the $C^1$-map  
\[
D: (- \eps, \eps) \times L^3 \longrightarrow M^6, \qquad (r, x) \longmapsto Exp(s_{\alpha_r})_x =: D_r(x).
\]
Note that $D_0 = Id_{L^3}$ and $dD(\p_r)|_{\{0\} \times L^3} = s_{\dot \alpha_0}$. Also, if we let $\Phi_r$ denote the flow of $\p_r$ on $(- \eps, \eps) \times L^3$, then $D_{r+t} = \Phi_r D_t$, whence by definition,
\begin{eqnarray*}
\left. \frac d{dr}\right|_{r=0} F(\alpha_r) & = & \left. \frac d{dr}\right|_{r=0} D_r^*(\om) = \left.\left(\left. \frac d{dr}\right|_{r=0} \Phi_r (D^*(\om))\right)\right|_{\{0\} \times L^3}\\
& = & \left.({\mathfrak L}_{\p_r} D^*(\om))\right|_{\{0\} \times L^3}
= \left.(\p_r \rfloor D^*(d\om) + d(\p_r \rfloor D^*(\om))\right|_{\{0\} \times L^3}\\
& = & D_0^*(s_{\dot \alpha_0} \rfloor d\om) + d(D_0^*(s_{\dot \alpha_0} \rfloor \om))
= s_{\dot \alpha_0} \rfloor d\om + d\dot \alpha_0.
\end{eqnarray*}

Here, we used Cartan's formula for the Lie derivative as well as the fact that by (\ref{eq:lom}), $s_{\dot \alpha_0} \rfloor \om = \dot \alpha_0$. Now the formula follows since $s_{\dot \alpha_0} \rfloor d\om = - 3 *\dot \alpha_0$ by (\ref{eq:betastar}).
\end{proof}

Recall that the Laplace operator on forms is defined as $\triangle = (d + d^*) ^2$, where $d^* = *d*$ is the adjoint of $d$. Proposition \ref{prop:inf1} yields  immediately the  following Corollary \ref{cor:ss}, which has been obtained  by  Sch\"afer-Smoczyk by a different method.

\begin{corollary}\label{cor:ss} (cf. \cite[Theorem 8.1]{SS2010}) Let $(L^3_r)_{r \in (-\eps, \eps)}$ be a $C^1$-regular family of Lagrangian submanifolds of $M^6$, such that $L^3_0 = L^3$ and $L^3_r = Exp(s_{\alpha_r})$ for a family $(\alpha_r)_{r \in (-\eps, \eps)}$ in $\Om^1(L^3)$. Then the derivative $\beta := \dot \alpha_0 = \p_r \alpha_r|_{r=0}$ is a solution of
\begin{equation}Ê\label{eq:lin-F}
*d\beta - 3\beta = 0.
\end{equation}
In particular, $d^* \beta = 0$  and  $\triangle  \beta = 9 \beta$.
\end{corollary}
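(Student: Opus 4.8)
The plan is to derive Corollary \ref{cor:ss} directly from the linearization formula in Proposition \ref{prop:inf1}. First I would observe that if $(L^3_r)$ is a $C^1$-regular family of Lagrangian submanifolds written as $L^3_r = Exp(s_{\alpha_r})$, then by definition $F(\alpha_r) = (Exp(s_{\alpha_r}))^*(\om)$ is the pullback of the fundamental form to $L^3$ via the embedding, and this vanishes identically in $r$ because each $L^3_r$ is Lagrangian. Hence $\frac{d}{dr}|_{r=0} F(\alpha_r) = 0$ as a $2$-form on $L^3$.

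Next I would apply Proposition \ref{prop:inf1} to the variation $(\alpha_r)$: its conclusion gives $\frac{d}{dr}|_{r=0} F(\alpha_r) = d\dot\alpha_0 - 3*\dot\alpha_0$, where $\dot\alpha_0 = \p_r\alpha_r|_{r=0} =: \beta$. Combining the two facts yields $d\beta - 3*\beta = 0$. Applying the Hodge star $*$ to this identity (and using $** = \mathrm{id}$ on forms of the relevant degrees on a $3$-manifold, so that $*d\beta$ is a $1$-form and $*(*\beta) = \beta$) gives $*d\beta - 3\beta = 0$, which is (\ref{eq:lin-F}).

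Finally I would read off the two consequences. Taking $d^*$ of $d\beta = 3*\beta$, and using $d^*d^* = 0$ together with $d^* = *d*$: from $*d\beta - 3\beta = 0$ we get $\beta = \tfrac13 *d\beta$, so $d^*\beta = \tfrac13 d^**d\beta = \tfrac13 (*d*)(*d\beta) = \pm\tfrac13 * d d\beta = 0$ since $dd = 0$ (the sign from $** = \pm 1$ is immaterial). For the eigenvalue statement, since $d^*\beta = 0$ we have $\triangle\beta = (d d^* + d^* d)\beta = d^* d\beta = d^*(3*\beta) = 3 d^**\beta = 3(*d*)(*\beta) = \pm 3 * d\beta = 9\beta$, where in the last step I substitute $d\beta = 3*\beta$ again and use $** = \pm 1$ once more, the signs combining to give $+9$.

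There is essentially no obstacle here: the entire content is already packaged in Proposition \ref{prop:inf1}, and the corollary is a formal manipulation with $d$, $d^*$, and the Hodge star on a closed oriented $3$-manifold. The only point requiring a modicum of care is bookkeeping the signs introduced by $** = (-1)^{k(n-k)}$ in dimension $n = 3$ — here $**$ acts as the identity on both $1$-forms and $2$-forms — and checking that the orientation convention (\ref{orient-Lagr}) fixed in Remark \ref{rem:orient} is the one under which Lemma \ref{lem:dom}, and hence Proposition \ref{prop:inf1}, was proved, so that the numerical constant $3$ (rather than $-3$) is correct.
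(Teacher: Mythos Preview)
Your argument is correct and matches the paper's approach: the paper simply states that Proposition \ref{prop:inf1} ``yields immediately'' Corollary \ref{cor:ss}, and what you have written is exactly the routine unpacking of that implication (vanishing of $F(\alpha_r)$ along a Lagrangian family, application of the linearization formula, and the standard identities $**=\mathrm{id}$ and $d^*=*d*$ on a closed oriented $3$-manifold). Your caveats about signs are well placed but ultimately unnecessary here, since $**=\mathrm{id}$ in all degrees in dimension~$3$.
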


We call the map $\p F|_0(\beta)$ from (\ref{eq:deriv1}) the {\em linearization of the equation $F = 0$ at $0$}.
We set
 $$\Om^1_{a} (L^3): = \{ \alpha \in  \Om^1 (L^3)|\, \triangle  (\alpha) = a \cdot \alpha\}.$$
 
All eigenvalues $a$ are nonnegative and the eigenspaces $\Om^1_{a} (L^3)$ are finite dimensional, as $\triangle$ is an elliptic differential operator \cite{Rosenberg1997}.

\begin{lemma} \label{lem:spectrum}
The map $*d: \Om^1(L^3) \to \Om^1(L^3)$ is selfadjoint, and its kernel is $dC^{\infty}(L^3) \oplus \Om^1_0$. Moreover, for each $a > 0$ we have the $L^2$-orthogonal decomposition
\[
\Om^1_a(L^3) \cap \ker d^* = K_{\sqrt a}(L^3) \oplus K_{-\sqrt a}(L^3),
\]
where $K_{\pm \sqrt a}(L^3)$ is the $(\pm \sqrt a)$-eigenspace of $*d$.
\end{lemma}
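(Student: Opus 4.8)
The plan is to analyze the first-order operator $*d$ acting on $\Om^1(L^3)$ and compare it with the Hodge Laplacian $\triangle$. First I would verify selfadjointness: for $\alpha,\beta\in\Om^1(L^3)$ on the closed oriented 3-manifold $L^3$ one has $\int_{L^3}\alpha\wedge d\beta = \int_{L^3} d\beta\wedge\alpha$ (both are top forms and $1$-forms commute up to the sign $(-1)^{1\cdot 2}=+1$ when passing $\alpha$ past $d\beta$), and since $\la *d\alpha,\beta\ra\,dvol = (*d\alpha)\wedge *\beta$ with $**=\mathrm{id}$ on $2$-forms, a short computation gives $\int\la *d\alpha,\beta\ra = \int d\alpha\wedge\beta = \int\alpha\wedge d\beta = \int\la\alpha,*d\beta\ra$, so $*d$ is selfadjoint. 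For the kernel, $*d\alpha = 0$ iff $d\alpha = 0$ (as $*$ is an isomorphism), and by the Hodge decomposition on $1$-forms, $\ker d = dC^\infty(L^3)\oplus\Hh^1(L^3)$ where $\Hh^1(L^3) = \Om^1_0(L^3)$ is the space of harmonic $1$-forms.

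Next I would pin down the relation $(*d)^2 = \triangle$ on $\ker d^*$. On a $3$-manifold, for a coclosed $1$-form $\beta$ (so $d^*\beta = 0$), one computes $d^*(*d\beta) = *d*(*d\beta) = \pm *d d\beta$; more directly, since $*d\beta$ is a $2$-form, $d^*(*d\beta) = -*d*(*d\beta)$, and using $**=\mathrm{id}$ on $2$-forms this is $-*dd\beta = 0$... — the clean route is: for $\beta\in\ker d^*$, $\triangle\beta = (dd^*+d^*d)\beta = d^*d\beta$, and $d^*d\beta = *d*d\beta$ on a $3$-manifold (here $*d*$ acts on the $2$-form $d\beta$), so $\triangle\beta = (*d)(*d)\beta$ because $* \,d\, * = *d$ up to the sign $(-1)^{k}$ which on $2$-forms in dimension $3$ works out to $+1$. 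Hence on $\Om^1_a(L^3)\cap\ker d^*$ the operator $*d$ is a selfadjoint operator with $(*d)^2 = a\cdot\mathrm{id}$, so its eigenvalues are $\pm\sqrt a$, giving the $L^2$-orthogonal eigenspace decomposition $\Om^1_a\cap\ker d^* = K_{\sqrt a}\oplus K_{-\sqrt a}$. Orthogonality of the two eigenspaces is automatic from selfadjointness and distinct eigenvalues.

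The one genuinely delicate point, and the step I expect to require the most care, is getting all the Hodge-star signs right in low degree: on an oriented Riemannian $3$-manifold, $**=(-1)^{k(3-k)}=\mathrm{id}$ on both $1$-forms and $2$-forms, and $d^* = (-1)^{3(k+1)+1}*d* = (-1)^{k}*d*$ on $k$-forms (so $d^* = -*d*$ on $1$-forms and $d^* = +*d*$ on $2$-forms). One must check that with these signs $d^*d\beta = *d*d\beta$ and that $*d$ restricted to coclosed forms squares to $+\triangle$ rather than $-\triangle$; a sign error here would make the eigenvalues imaginary, which cannot happen since $*d$ is selfadjoint, so the selfadjointness serves as a built-in consistency check. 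I would also note that for $a>0$, $\Om^1_a(L^3)\cap\ker d^*$ is nonzero only when $a$ is in the spectrum of $\triangle$ on coexact plus the relevant forms; no claim is made that both $K_{\pm\sqrt a}$ are nonzero, only that the intersection decomposes as stated, so nothing further is needed. Finally, finite-dimensionality of all these spaces is inherited from ellipticity of $\triangle$, already recorded in the paragraph preceding the lemma.
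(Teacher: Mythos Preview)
Your argument is correct and follows essentially the same route as the paper: selfadjointness of $*d$, identification of $\ker(*d)=\ker d = dC^\infty\oplus\Om^1_0$ via Hodge theory, and the relation $(*d)^2=\triangle$ on $\ker d^*$ to split $\Om^1_a\cap\ker d^*$ into $(\pm\sqrt a)$-eigenspaces. The only loose thread is that you should finish the check that $*d$ actually preserves $\ker d^*$ (you began it and then switched to the ``clean route''); with the paper's convention $d^*=*d*$ this is immediate from $d^*(*d\beta)=*d**d\beta=*d^2\beta=0$, and once that is in hand your eigenspace decomposition goes through.
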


\begin{proof}
The self-adjointness of $*d$ follows since $d^* = *d*$ and the Hodge-$*$ operator is self-adjoint. Moreover, since $*d = d^* *$, it follows that the image of $*d$ equals the image of $d^*: \Om^2(L^3) \to \Om^1(L^3)$, so that the kernel of $*d$ is the orthogonal complement of this image, which by Hodge decomposition equals $dC^{\infty}(L^3) \oplus \Om^1_0$ as claimed.

Since $*d$ commutes with $\triangle$, it follows that $*d$ preserves $\Om^1_a$ and hence, $\Om^1_a(L^3) \cap \ker d^* $. But on $\ker d^*$ we have $(*d)^2 = d^*d = \triangle$, whence on $\Om^1_a(L^3) \cap \ker d^*$ we have $(*d)^2 = a\; Id$, so that these subspaces can be decomposed into $K_{\sqrt a} \oplus K_{-\sqrt a}$ as claimed.
\end{proof}

It follows from (\ref{eq:deriv1}) that
\begin{equation} \label{def:kernel}
T_{L^3} := \ker \p F|_0 = K_3(L^3).
\end{equation}

The equation $F(\alpha) = 0$ with $F$ from (\ref{eq:def-F}) is an overdetermined equation. In fact,   one of  the technical problems we wish to overcome is the fact that $\p F|_0$ from (\ref{eq:deriv1}) is not an elliptic operator, but only the  restriction of an elliptic first order operator to a subspace as we shall show now.

For this purpose, we extend $F$ by its prolongation $dF$ and add another parameter. Namely, we define
$$C^\infty_0 (L^3) : = \{ f \in C^\infty  (L^3)|\, \int _{L^3}  f\;  dvol = 0\}$$
and the extended map $F$ to
\begin{equation} \label{eq:om1}
\begin{array}{lrll}
\hat F : & \Om ^1 (L^3) \oplus C^\infty_0 (L^3) & \longrightarrow & \Om^1 (L^3) \oplus C^\infty_0 (L^3)\\
& (\alpha, f) & \longmapsto & (*F(\alpha) + df) - \dfrac13 *dF(\alpha).
\end{array}
\end{equation}

\begin{proposition}\label{prop:ell}  $(\alpha, f)$  is  a solution of the equation $\hat F(\alpha, f)=0$ if and only if  $\alpha$ is   a  solution   of the equation $F(\alpha)= 0$  and $f =0$.
\end{proposition}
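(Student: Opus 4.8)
The plan is to prove both implications of the ``if and only if'' separately, with the forward direction being routine and the reverse direction containing the real content. For the reverse implication, if $F(\alpha) = 0$ and $f = 0$, then $*F(\alpha) + df = 0$ and $dF(\alpha) = d(0) = 0$, so $\hat F(\alpha, 0) = 0$ trivially. The only subtlety is to check that $\hat F(\alpha, 0)$ lands in the correct target space, i.e.\ that the $C^\infty_0(L^3)$-component really is $L^2$-orthogonal to constants; but since $-\frac13 *dF(\alpha) = -\frac13 *d(0) = 0$ in that case, this is immediate.

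For the forward implication, suppose $\hat F(\alpha, f) = 0$. Writing $\eta := F(\alpha) \in \Om^2_{C^0}(L^3)$, the equation splits into its $\Om^1(L^3)$-component and its $C^\infty_0(L^3)$-component. Applying $*$ to the defining formula in (\ref{eq:om1}), the vanishing of $\hat F(\alpha,f)$ is equivalent to the single $2$-form equation obtained by taking $*$ of everything, but it is cleaner to argue directly: $\hat F(\alpha,f) = 0$ means $(*\eta + df) = 0$ as a $1$-form and $\frac13 *d\eta = 0$ as a function in $C^\infty_0(L^3)$. The second equation gives $d\eta = 0$, i.e.\ $\eta$ is a closed $2$-form on the $3$-manifold $L^3$. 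The first equation gives $*\eta = -df$, hence $\eta = -{*}df \cdot (\text{sign})$ is, up to Hodge star, exact; more precisely $d^*\eta = \pm * d * \eta = \mp * d df = 0$ as well (using $d^2 = 0$). So $\eta$ is both closed and co-closed, hence harmonic. But $*\eta = -df$ forces $\eta = -*df$; taking $d^*$ of both sides, $0 = d^*\eta = d^*(-*df) = -d^* * df$. Now the key computation: pairing $\eta = -*df$ with itself in $L^2$ and integrating by parts, $\|\eta\|_{L^2}^2 = -\langle *df, \eta\rangle = -\langle df, *\eta\rangle$ — one exploits that $*\eta = -df$ again, together with $\int_{L^3} df \wedge * \eta$-type manipulations, to conclude $\|df\|_{L^2}^2 = \langle df, *\eta\rangle = -\langle df, df\rangle = -\|df\|_{L^2}^2$, forcing $df = 0$ and hence $\eta = 0$.

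The cleanest route, which I would actually write out, is this: from $*\eta = -df$ we get, integrating, $\int_{L^3} \langle *\eta, df\rangle\, dvol = -\|df\|^2_{L^2}$, while on the other hand $\int_{L^3}\langle *\eta, df\rangle\, dvol = \int_{L^3}\langle \eta, *df\rangle\, dvol = \int_{L^3} \langle \eta, -{*}{*}\eta\rangle = -\int_{L^3}\langle \eta, \eta\rangle\, dvol = -\|\eta\|_{L^2}^2$ (using $**= \mathrm{id}$ on $2$-forms over an oriented $3$-manifold, where $\eta = -*df$ so $*df = -\eta$ and $**\eta = \eta$). Comparing, $\|df\|_{L^2}^2 = \|\eta\|_{L^2}^2 = \|{*}df\|_{L^2}^2 = \|df\|_{L^2}^2$, which is a tautology and gives nothing — so instead one should note directly that $*\eta = -df$ together with $d\eta = 0$ gives $0 = d*\eta = -d\,df$ trivially, and $0 = d^*\eta$; but also $\eta = -{*}(*\eta)\cdot(\pm 1) $. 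The decisive point is that $\eta = F(\alpha) = (Exp(s_\alpha))^*(\om)$ is the pullback of $\om$, and once $\eta$ is shown to be both closed and exact-via-Hodge-star one uses that $f \in C^\infty_0(L^3)$ so $df$ has no harmonic component; since $*\eta = -df$ is therefore $L^2$-orthogonal to the harmonic $1$-forms while also $d^*(*\eta) = \pm *d\eta = 0$ shows $*\eta$ is co-closed, the only way a co-closed $1$-form equals an exact $1$-form $-df$ is if both vanish by the Hodge decomposition. Hence $df = 0$, and since $f \in C^\infty_0(L^3)$ has zero mean, $f \equiv 0$; then $*\eta = 0$ gives $\eta = F(\alpha) = 0$, as desired.

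The main obstacle I anticipate is purely bookkeeping: getting the signs and the placement of Hodge stars exactly right in (\ref{eq:om1}) so that the $C^\infty_0(L^3)$-component argument (zero mean, hence no harmonic part of $df$) cleanly separates from the co-closedness of $*\eta$. There is no hard analysis here — everything reduces to the Hodge decomposition on the compact oriented $3$-manifold $L^3$ (finite-dimensionality and orthogonality of the pieces were already invoked after Lemma \ref{lem:spectrum}) plus the elementary identities $d^2 = 0$, $** = \mathrm{id}$, and $d^* = *d*$ on forms over $L^3$. I would present the forward direction as: (i) split $\hat F = 0$ into its two components; (ii) deduce $d\eta = 0$ from the function component; (iii) deduce $*\eta = -df$ from the $1$-form component and hence $d^*(*\eta) = 0$; (iv) apply Hodge decomposition to conclude $df = 0$ and $\eta = 0$; (v) use $\int f\, dvol = 0$ to upgrade $df = 0$ to $f = 0$. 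The reverse direction is one line.
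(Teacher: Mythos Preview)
Your final outline (i)--(v) is correct and is essentially the paper's argument, just phrased via Hodge decomposition rather than by directly computing $\Delta f = 0$. The paper simply takes $d$ of $F(\alpha) = -*df$ and uses $dF(\alpha) = 0$ to get $d*df = 0$, hence $\Delta f = 0$, hence $f$ constant, hence $f = 0$ and $F(\alpha) = 0$; your version observes that $*\eta = -df$ is simultaneously exact and co-closed (the latter because $d^*(*\eta) = *d\eta = 0$), which forces it to vanish --- the same computation in different clothing.

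Two remarks on presentation. First, the long middle paragraph with the $L^2$ pairings is confused and, as you yourself note, leads to a tautology; it should be deleted entirely in favor of your clean outline. Second, in step (iii) the clause ``and hence $d^*(*\eta) = 0$'' is misleading: the co-closedness of $*\eta$ does not follow from $*\eta = -df$ but from step (ii), via $d^*(*\eta) = *d\eta = 0$. Reorder the logic so this is transparent, and the proof will be as short as the paper's.
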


\begin{proof}
By definition (\ref{eq:om1}), $(\alpha, f)$  is a  solution  of  $\hat F (\alpha , f ) = 0$  iff
\begin{equation}
dF(\alpha) = 0 \qquad \text { and } \qquad F(\alpha) =  -* df. \label{eq:salpha}
\end{equation}
Taking the exterior derivative of the second equation in (\ref{eq:salpha}), this implies that $d*df = 0$ and hence, $\triangle f = 0$.  Thus, $f$ is constant, and since $f\in C^\infty _0 (L^3)$,  we  obtain $f=0$. Therefore, $\alpha$  is a solution of  the equation  $F(\alpha) = 0$. The converse is obvious.
\end{proof}

\begin{remark} From this proof, we can also conclude that
\begin{equation} \label{eq:image-in-T3}
\hat F(\alpha, f) \in T_{L^3} \Leftrightarrow f = 0 \mbox{ and } F(\alpha) = \frac13 d*F(\alpha).
\end{equation}
\end{remark}

It follows from (\ref{eq:deriv1})  and (\ref{eq:om1}) that the  differential 
 $$\p \hat F |_{(0, 0)}: \Om ^1 (L^3)\oplus C^\infty _0(L^3) \longrightarrow \Om ^1 (L^3)\oplus C^\infty _0(L^3)$$
  has the following form
 \begin{equation}
 \p \hat F |_{(0,0)} (\beta, g) = (*d\beta - 3 \beta  + dg , d^* \beta) .\label{eq:dpf1}
 \end{equation}
 
\begin{proposition}\label{prop:kern} $\p \hat F |_{(0,0)}$ is a self adjoint elliptic first order differential operator, and
\[
\ker \p \hat F|_{(0,0)} = T_{L^3}.
\]
That is, $(\beta, g)\in \ker  \p \hat F|_{(0,0)}$ iff $*d\beta = 3 \beta$ and  $g  = 0$.
 \end{proposition}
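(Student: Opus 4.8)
The plan is to verify the three assertions of Proposition~\ref{prop:kern} essentially by unwinding the formula \eqref{eq:dpf1} and invoking the algebraic results of Lemma~\ref{lem:spectrum}. First I would establish ellipticity: the principal symbol of $\p\hat F|_{(0,0)}$ at a covector $\zeta\in T^*_xL^3$ sends $(\beta,g)\mapsto(\ast(\zeta\wedge\beta)+g\,\zeta,\ \langle\zeta,\beta\rangle)$, since $d$ has symbol $\zeta\wedge\cdot$, $d^\ast$ has symbol $-\iota_{\zeta^\sharp}$, and the zeroth-order term $-3\beta$ drops out. I would check injectivity of this symbol map directly: if $\ast(\zeta\wedge\beta)+g\zeta=0$ and $\langle\zeta,\beta\rangle=0$, pair the first equation with $\zeta^\sharp$ to get $g|\zeta|^2=0$ (the $\ast(\zeta\wedge\beta)$ term being orthogonal to $\zeta$), so $g=0$; then $\zeta\wedge\beta=0$ together with $\iota_{\zeta^\sharp}\beta=0$ forces $\beta=0$ for $\zeta\neq 0$. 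Since source and target bundles have the same rank, injectivity of the symbol gives ellipticity.

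Next I would prove self-adjointness. The operator $\beta\mapsto\ast d\beta$ is self-adjoint on $\Om^1(L^3)$ by Lemma~\ref{lem:spectrum}, the multiplication by $-3$ is obviously self-adjoint, and the remaining cross terms $(\beta,g)\mapsto(dg,d^\ast\beta)$ form a self-adjoint operator because $d^\ast$ is the formal adjoint of $d$; more precisely, for $(\beta_1,g_1),(\beta_2,g_2)$ one computes
\[
\langle dg_1,\beta_2\rangle_{L^2}+\langle d^\ast\beta_1,g_2\rangle_{L^2}
=\langle g_1,d^\ast\beta_2\rangle_{L^2}+\langle\beta_1,dg_2\rangle_{L^2},
\]
so the block matrix is symmetric. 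Hence $\p\hat F|_{(0,0)}$ is formally self-adjoint, and being elliptic on a closed manifold it is (essentially) self-adjoint in the appropriate Sobolev sense; this is what will be used later for the Fredholm-index-zero statement.

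Finally I would identify the kernel. Suppose $\p\hat F|_{(0,0)}(\beta,g)=0$, i.e. $\ast d\beta-3\beta+dg=0$ and $d^\ast\beta=0$. Applying $d^\ast$ to the first equation and using $d^\ast\ast d=\pm\ast d d=0$ (since $dd=0$) together with $d^\ast\beta=0$ gives $d^\ast dg=\triangle g=0$, so $g$ is constant, and as $g\in C^\infty_0(L^3)$ we get $g=0$. The first equation then reduces to $\ast d\beta=3\beta$, which with $d^\ast\beta=0$ means precisely $\beta\in K_3(L^3)=T_{L^3}$ in the notation of \eqref{def:kernel}; conversely any such $\beta$ (paired with $g=0$) lies in the kernel. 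I do not expect a serious obstacle here: the only mildly delicate point is being careful with signs in the identity $d^\ast(\ast d\beta)=0$ on a $3$-manifold, which follows from $d^\ast=\ast d\ast$, $\ast\ast=\pm\mathrm{Id}$, and $dd=0$; everything else is formal manipulation already prepared by Lemma~\ref{lem:spectrum}.
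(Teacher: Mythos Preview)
Your proposal is correct and, for self-adjointness and the kernel computation, essentially identical to the paper's argument (apply $d^\ast$ to the first equation, use $d^\ast(\ast d\beta)=0$ and $d^\ast\beta=0$ to get $\triangle g=0$, hence $g=0$).  The only genuine difference is in the ellipticity step: you verify injectivity of the principal symbol by hand, whereas the paper observes that the principal part of $\p\hat F|_{(0,0)}$ coincides with $(*d+d^\ast)\oplus d$ on $\Om^1(L^3)\oplus C_0^\infty(L^3)$ and that the square of this operator is the Hodge Laplacian, which is elliptic.  Both routes are short; the paper's has the mild advantage of avoiding the pointwise linear-algebra check (and the attendant sign bookkeeping you flag at the end), while yours is more self-contained and does not rely on recognizing the Dirac-type structure.
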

 \begin{proof} The symbol of $\p \hat F |_{(0,0)}$ is coincides with that of $(*d + d^*)|_{\Om^1(L^3)} + d|_{C_0^\infty(L^3)}$, and it is straightforward to see that the square of the latter operator is $\triangle|_{\Om^1(L^3) \oplus C_0^\infty(L^3)}$. From this, the ellipticity of $\p \hat F |_{(0,0)}$ follows.
 
To see that $\p \hat F |_{(0,0)}$ is self adjoint, let $\beta, \gamma \in \Om^1(L^3)$ and $g, h \in C_0^\infty(L^3)$. Then
 \begin{eqnarray*}
\la \p \hat F |_{(0,0)}(\beta, g), (\gamma, h)\ra_{L^2} & = & \la *d\beta - 3 \beta + dg, \gamma \ra_{L^2} + \la d^*\beta, h \ra_{L^2}\\
& = & \la \beta, d^* * \gamma \ra_{L^2} - 3 \la \beta, \gamma \ra_{L^2} + \la g, d^*\gamma \ra_{L^2} + \la \beta, dh \ra_{L^2}\\
& = & \la \beta, *d \gamma - 3 \gamma + dh \ra_{L^2} + \la g, d^*\gamma \ra_{L^2}\\
& = & \la (\beta, g), \p \hat F |_{(0,0)}(\gamma, h) \ra_{L^2}.
\end{eqnarray*}
To compute the kernel, let $(\beta, g)$ be such that $*d\beta - 3 \beta  + dg = 0$ and $d^*\beta = 0$. Then, applying $d^*$ to the first equation and using the second, it follows that $d^*dg = 0$, whence $\triangle g = 0$, so that $g$ is constant and hence vanishes. Thus, $*d\beta - 3 \beta = 0$, so that $\beta \in T_{L^3} \subset \ker d^*$, and this completes the  proof of Proposition \ref{prop:kern}.
 \end{proof}

 Propositions \ref{prop:ell}, \ref{prop:kern}    imply  that,  the expected  dimension of the  moduli  space  of Lagrangian   submanifolds  is of zero dimension.   
Most interesting  examples   of strict nearly K\"ahler  manifolds    possess non-trivial  symmetry group  and  hence   the   moduli space  of  their
Lagrangian   submanifolds  is  of positive dimension.  The  usual  Fredholm theory  is not  yet developed to deal with    this  phenomenon, see also \cite{Lotay2012} for  a    related  consideration.  Thus we shall     exploit   the  analyticity  of  the  strict  nearly K\"ahler  structure  $(J, g)$  on $M^6$  in  our next consideration.

\subsection{Analyticity  of  Lagrangian deformations  and  its consequences}

As we pointed out in Proposition \ref{thm:NK-anal}, a nearly-K\"ahler manifold $(M^6, g, J, \om)$ is real analytic. As we shall show now, Lagrangian submanifolds are analytic as well. More concretely, we have the following

\begin{proposition}\label{prop:ana}  Let  $L^3$  be a   smooth compact Lagrangian   submanifold in an   analytic strictly nearly K\"ahler  6-manifold  $M^6$.  Then $L^3$ is also an analytic  submanifold  of  $M^6$.
\end{proposition}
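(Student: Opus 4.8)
The plan is to show that a smooth compact Lagrangian submanifold $L^3 \subset M^6$ is automatically a solution of an elliptic system with real analytic coefficients, and then to invoke Morrey's elliptic regularity theorem to conclude analyticity. By Corollary \ref{cor:minnk6}, $L^3$ is a minimal submanifold of $M^6$. The key point is that $M^6$ carries a real analytic atlas in which $g$ is real analytic (Proposition \ref{thm:NK-anal}), so the minimal surface system, which in local analytic coordinates on $M^6$ takes the form $\triangle_{g|_{L}} x^\alpha + g^{ij} \Gamma^\alpha_{\beta\gamma}(x) \p_i x^\beta \p_j x^\gamma = 0$ for the coordinate functions $x^\alpha$ of the immersion, has real analytic dependence on the unknowns. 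One then applies the classical interior analyticity theorem for solutions of nonlinear elliptic systems with analytic coefficients (Morrey; see also \cite{DK1981} for the companion statement on metrics).

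More precisely, first I would set up local coordinates: fix $p \in L^3$ and choose a real analytic coordinate chart $(U; x^1,\dots,x^6)$ on $M^6$ around $p$ adapted to the analytic structure of Proposition \ref{thm:NK-anal}, so that $g_{\alpha\beta}(x)$ and hence the Christoffel symbols $\Gamma^\alpha_{\beta\gamma}(x)$ are real analytic functions of $x$. Since $L^3$ is a smooth submanifold, after possibly shrinking $U$ we may write $L^3 \cap U$ as a smooth graph, i.e. express three of the coordinates as smooth functions of the remaining three; this gives a smooth local parametrization $u \mapsto X(u) \in M^6$, $u \in \Omega \subset \R^3$. The induced metric $h_{ij}(u) = g_{\alpha\beta}(X(u)) \p_i X^\alpha \p_j X^\beta$ is then smooth and positive definite, and minimality of $L^3$ is the statement that $X$ satisfies the harmonic-map-type system $\triangle_h X^\alpha + h^{ij} \Gamma^\alpha_{\beta\gamma}(X) \p_i X^\beta \p_j X^\gamma = 0$. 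Written out with $\triangle_h = \tfrac{1}{\sqrt{\det h}} \p_i(\sqrt{\det h}\, h^{ij} \p_j \cdot)$, this is a quasilinear second-order elliptic system for $X$ whose coefficients depend real analytically on $X$ and its first derivatives (the only nontrivial input being analyticity of $g_{\alpha\beta}$ and $\Gamma^\alpha_{\beta\gamma}$, which we have arranged).

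The second step is to quote the regularity theorem: a $C^{2,\alpha}$ (a fortiori smooth) solution of a second-order elliptic system whose coefficients are real analytic functions of the independent variables, the unknowns, and their first derivatives is itself real analytic in the interior of its domain. This is Morrey's theorem (C. B. Morrey, \emph{On the analyticity of the solutions of analytic non-linear elliptic systems of partial differential equations}, Amer. J. Math. 80 (1958)), which is precisely the tool DeTurck and Kazdan use in \cite{DK1981}. Applying it to $X$ on $\Omega$ shows that $X$ is real analytic near $u(p)$, hence $L^3$ is an analytic submanifold near $p$. Since $p$ was arbitrary and compactness is not even needed for this local conclusion, $L^3$ is real analytic everywhere.

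The main obstacle — really the only substantive point — is making sure the system to which one applies Morrey's theorem genuinely has analytic coefficients, which is exactly why Proposition \ref{thm:NK-anal} is needed: without the analytic atlas on $M^6$ the Christoffel symbols would only be smooth and the conclusion would fail. A minor secondary point is that the minimal surface system as written is only elliptic modulo the choice of parametrization (the operator $\triangle_h$ depends on $X$ through $h$); one should either observe that in graph coordinates the system is already in non-degenerate quasilinear elliptic form for the three graphing functions, or pass to harmonic coordinates on $L^3$ (which, being solutions of an analytic elliptic equation once the ambient metric is analytic, are themselves compatible with analyticity) so that the principal part becomes the standard Laplacian. Either way the reduction is routine and the cited regularity theorem finishes the proof.
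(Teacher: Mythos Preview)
Your argument is correct and follows precisely the paper's own approach: invoke the minimality of $L^3$ (Corollary \ref{cor:minnk6}) together with the analyticity of the ambient metric (Proposition \ref{thm:NK-anal}), and then apply Morrey's interior analyticity theorem for solutions of analytic elliptic systems. The paper's proof is a one-sentence citation of exactly this chain of reasoning; your version simply unpacks the details.
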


\begin{proof} Since any  Lagrangian  submanifold $L^3\subset (M^6, J, g)$  is  a minimal submanifold   in  $(M^6,g)$, the  Morrey  regularity theorem for vector solutions of class $C^1$ of a regular variational problem  \cite{Morrey1954}, \cite{Morrey1958} (see also \cite{Morrey2008}, \cite[IV.2.B]{HL1982})  implies that  $L^3 \subset  M^6$  is a real analytic   submanifold  of $M^6$.
\end{proof}

Note that due to the analyticity of $L^3 \subset M^6$, the normal exponential map
 \[
 Exp: T^* L \stackrel {L_\om} \cong N L \longrightarrow M^6
 \]
is a real analytic diffeomorphism onto a tubular neighborhood of $L^3$.

\begin{definition} \label{def:formal-smooth}
Let $L^3 \subset M^6$ be a closed Lagrangian submanifold.
\begin{enumerate}
\item An element $\alpha \in T_{L^3}$ is called
{\it smoothly unobstructed} or {\it smoothly integrable}, if there is a smooth Lagrangian deformation $s(t): L ^3 \to M^6$ such that $s(0) = L^3$ and $\dot s(0) = \alpha$. Otherwise, $\alpha$ is called {\it smoothly obstructed}. 
\item An element $\alpha \in T_{L^3}$ is called
{\it formally unobstructed} or {\em formally integrable}, if  there  exists  a sequence  $\alpha_1 = \alpha, \alpha_2, \cdots \in \Om^1(L^3)$ such that
the formal power series
\begin{equation} \label{eq:alpha-t}
\alpha_t := \sum_{n=1}^\infty \alpha_n t^n \in \Om^1(L^3)[[t]]
\end{equation}
satisfies
\[
F(\alpha_t) 
= 0 \in \Om^2(L^3)[[t]]
\]
as a formal power series in $t$.
\item We call $L^3$ {\em regular } if every $\alpha \in T_{L^3}$ is formally unobstructed.
\end{enumerate}
\end{definition}

Clearly, if $\alpha \in T_{L^3}$ is smoothly unobstructed, then it is formally unobstructed. Indeed, if $s(t)$ is a smooth Lagrangian deformation with $s(0) = L^3$, then $s(t) = Exp(L_\om(\alpha(t)))$ for some cuve $\alpha(t) \in \Om^1(L^3)$ with $\alpha(0) = 0$ such that $F(\alpha_t) \equiv 0$. Let $\alpha_t := \sum_{n=1}^\infty \alpha_n t^n \in \Om^1(L^3)[[t]]$ be the Taylor series of $\alpha(t)$ at $t = 0$. Then $F(\alpha_t)$ is the Taylor series at $t = 0$ of the function $0 \equiv F(\alpha(t))$ and hence vanishes.

The main purpose of this section is to show the converse: every formally unobstructed element $\alpha \in T_{L^3}$ is smoothly obstructed, and this condition is equivalent to the smooth or formal obstructedness of $\alpha$ w.r.t. an analytic function $\tau: U \to T_{L^3}$ with $U \subset T_{L^3}$ a neighborhood of the origin, i.e. of an analytic function in finitely many variables (cf. Corollary \ref{cor:formal-tau} below). More precisely, we show the following

\begin{theorem}\label{thm:moduli}  The moduli  space  of  closed Lagrangian  submanifolds of a $6$-dimensional nearly-K\"ahler manifold in the $C^1$-topology locally is a finite dimensional analytic variety. 

More concretely, for a closed Lagrangian $L^3 \subset M^6$ there is an open neighborhood $U \subset T_{L^3}$ of the origin and a real analytic map $\tau: U \to T_{L^3}$ with $\tau(0) = 0$ and $\p\tau|_0 = 0$, as well as a $C^\infty$-map $\Phi: L^3 \times U \to M^6$ such that $L^3_\alpha := \Phi(L^3 \times \{\alpha\})$ satisfies:

$L^3_0 = L^3$, and any closed submanifold $L' \subset M^6$ $C^1$-close to $L^3$ is Lagrangian if and only if $L' = L^3_\alpha$ for some $\alpha \in \tau^{-1}(0)$.
\end{theorem}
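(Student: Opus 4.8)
The strategy is to build the finite-dimensional model by a Lyapunov--Schmidt reduction, using the ellipticity of $\p \hat F|_{(0,0)}$ from Proposition \ref{prop:kern} together with the analyticity of the setup (Propositions \ref{thm:NK-anal}, \ref{prop:ana}). First I would work with the extended map $\hat F$ from (\ref{eq:om1}) on suitable Banach (or Banach-analytic) completions: since $\p\hat F|_{(0,0)}$ is a self-adjoint elliptic first-order operator with $\ker \p\hat F|_{(0,0)} = T_{L^3} = K_3(L^3)$, elliptic theory gives an $L^2$-orthogonal decomposition $\Om^1(L^3) \oplus C^\infty_0(L^3) = T_{L^3} \oplus W$, where $W = \mathrm{im}\,\p\hat F|_{(0,0)}$ is closed and $\p\hat F|_{(0,0)}$ restricts to an isomorphism $W \to W$ (self-adjointness makes domain and target cokernel coincide, so the index-zero statement of Proposition \ref{prop:kern} is upgraded to an explicit splitting). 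Let $\pi: \Om^1 \oplus C^\infty_0 \to W$ and $\pi^\perp = \mathrm{Id} - \pi$ be the projections.

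\textbf{The reduction.} Write an element near $0$ as $(\alpha,f) = u + w$ with $u \in T_{L^3}$, $w \in W$. The equation $\pi\,\hat F(u+w) = 0$ has, at $(u,w)=(0,0)$, invertible linearization in $w$ (it is $\p\hat F|_{(0,0)}|_W$), so the implicit function theorem produces a map $w = W(u)$, defined for $u$ in a neighborhood $U \subset T_{L^3}$ of the origin, with $W(0)=0$ and $\p W|_0 = 0$ (the latter because $\p\hat F|_{(0,0)}$ kills $T_{L^3}$). The crucial point is that $\hat F$ is built from the pullback $(\mathrm{Exp}(s_\alpha))^*\om$ where $\mathrm{Exp}$ is a \emph{real analytic} diffeomorphism onto a tubular neighborhood (by Proposition \ref{prop:ana} and the remark following it) and $\om$ is a real analytic form (Proposition \ref{thm:NK-anal}); hence $\hat F$ is a real-analytic map of Banach spaces, so the analytic implicit function theorem applies and $W(u)$ is real analytic in $u$. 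Define $\tau: U \to T_{L^3}$ by $\tau(u) := \pi^\perp \hat F(u + W(u))$; this is real analytic, $\tau(0) = 0$, and $\p\tau|_0 = \pi^\perp \p\hat F|_{(0,0)} = 0$ since the image of $\p\hat F|_{(0,0)}$ lies in $W = \ker\pi^\perp$. By construction $\hat F(u+W(u)) = 0$ if and only if $\tau(u) = 0$, and then by Proposition \ref{prop:ell} the corresponding $\alpha$ solves $F(\alpha)=0$ with $f=0$; conversely every $C^1$-small Lagrangian deformation corresponds to some $\alpha$ with $F(\alpha)=0$, hence to $(\alpha,0)$ with $\hat F(\alpha,0)=0$, hence to a $u \in \tau^{-1}(0)$. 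Finally set $\Phi(x,u) := \mathrm{Exp}_x\big(L_\om(\alpha(u))_x\big)$ where $\alpha(u)$ is the $\Om^1$-component of $u + W(u)$; this is $C^\infty$ (indeed real analytic) in $(x,u)$, and $L^3_u := \Phi(L^3\times\{u\})$ is the deformation, with $L^3_0 = L^3$. This gives exactly the assertion of the theorem, with the open set and map both called $U$ and $\tau$ as in the statement.

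\textbf{Main obstacle.} The delicate point is not the formal Lyapunov--Schmidt bookkeeping but justifying that $\hat F$ is real-analytic \emph{as a map between the relevant function spaces} and that the analytic implicit function theorem genuinely applies. One must choose function spaces carefully: $\hat F$ loses one derivative (it is first order in $\alpha$), so one works with, say, $C^{k,\mu}$ or Sobolev completions $\hat F: (\Om^1 \oplus C^\infty_0)^{k+1,\mu} \to (\Om^1 \oplus C^\infty_0)^{k,\mu}$, and the elliptic operator $\p\hat F|_{(0,0)}$ is Fredholm between these; the splitting and the fixed-point argument must be carried out in this scale, and one then invokes elliptic regularity to conclude that solutions are $C^\infty$ (and in fact analytic). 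The analyticity of $\hat F$ in the Banach-space sense follows because it is a composition of the analytic substitution operator associated to the analytic map $(\alpha,x) \mapsto \mathrm{Exp}_x(L_\om(\alpha)_x)$ with pullback of the analytic form $\om$ and with exterior derivative and Hodge-$*$ (which are bounded linear, hence analytic, in these scales) --- but verifying that composition with a fixed real-analytic function defines a real-analytic operator on the chosen Banach space requires the standard (but nontrivial) results on analytic maps of function spaces. Once this is in place, the dimension reduction to the finite-dimensional $T_{L^3}$ and the analyticity of $\tau$ are immediate, and Corollary \ref{cor:formal-tau} (equivalence of formal and smooth unobstructedness) follows because $\tau^{-1}(0)$ is a real-analytic variety, for which the formal and convergent solution sets of $\tau = 0$ coincide by Artin-type approximation, or more elementarily because an analytic map vanishing to infinite order along a formal arc vanishes along a genuine convergent arc.
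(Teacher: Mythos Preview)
Your proposal is correct and follows essentially the same route as the paper: a Kuranishi/Lyapunov--Schmidt reduction of the analytic map $\hat F$ on Sobolev completions, using the self-adjoint ellipticity of $\p\hat F|_{(0,0)}$ to split off the finite-dimensional kernel $T_{L^3}$ and produce the analytic obstruction map $\tau$. The only differences are cosmetic---the paper replaces your implicit-function step by applying the analytic \emph{inverse} function theorem to the modified map $\underline{\hat F}:=\pi-\hat F$ (whose differential at $0$ is invertible), which amounts to a reparametrization of your slice---and the paper discharges the ``main obstacle'' you correctly flag by proving explicit Cauchy-type estimates (Lemma~\ref{lem:omn-pointwise}) for $\frac{d^n}{dt^n}\big|_{t=0}F(t\alpha)$ in local analytic coordinates, rather than appealing to general results on analytic Nemytskii operators.
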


In order to work towards the proof, we let $\tilde \om := (L_\om \circ Exp)^*(\om) \in \Om^2(T^*L^3)$, so that $F(\alpha) = \alpha^*(\tilde \om)$. We associate to each $\alpha \in \Om^1(L^3)$ the vector field $\xi_\alpha$ on $T^*L^3$ which on each fiber $T_p^*L^3$ is constant equal to $\alpha_p$.

If $\alpha \in \Om_{C^k}^*(L^3)$ is $C^k$-regular, we define for each $p \in L^3$ its $C^k$-norm in $p$ and on $L^3$ as
\begin{equation} \label{eq:Ck-norm}
\|\alpha\|_{C^k;p} := \sum_{|I|\leq k} \|(D^I\alpha)_p\|, \qquad \|\alpha\|_{C^k} := \sup_{p \in L^3} \|\alpha\|_{C^k;p},
\end{equation}
where the sum is taken over all multi-indices $I$ in a coordinate system of $L^3$ which is orthogonal at $p$.

\begin{lemma} \label{lem:omn-pointwise}
There are constants $A, C > 0$ such that for each $C^k$-regular $\alpha \in \Om_{C^k}^1(L^3)$, $k \geq 1$, and each $p \in L^3$ we have
\begin{eqnarray} \label{eq:omn-pointwise}
\left\|\left. \dfrac{d^n}{dt^n} \right|_{t=0} (F(t \alpha))\right\|_{C^{k-1};p} 
& \leq & n! A C^n \|\alpha\|_{C^k;p}^n\\
\label{eq:domn-pointwise} \left\|\left. \dfrac{d^n}{dt^n} \right|_{t=0} (d(F(t \alpha)))\right\|_{C^{k-1};p} 
& \leq & n! A C^n \|\alpha\|_{C^k;p}^n.
\end{eqnarray}

In particular, at each $p \in L^3$, the maps $t \mapsto F(t\alpha)_p \in \Lambda^2 T_p^*L^3$ and $t \mapsto d(F(t\alpha))_p \in \Lambda^3 T_p^*L^3$ are real analytic at $t=0$, and all their derivatives w.r.t. $t$ are $C^{k-1}$-regular $2$-forms and $3$-forms on $L^3$, respectively.
\end{lemma}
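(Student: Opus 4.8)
The statement is a quantitative analyticity estimate for the map $F(\alpha) = \alpha^*(\tilde\om)$ as a function of the scalar parameter $t$, with uniform control in the $C^{k-1}$-norm by the $C^k$-norm of $\alpha$. The key point is that $F(t\alpha)$ is, at each point $p$, an algebraic-differential expression in the components of $t\alpha$ and their first derivatives, with coefficients that are themselves real analytic functions on the (compact, analytic) manifold $T^*L^3$ via the analytic diffeomorphism $Exp \circ L_\om$. The plan is first to write $F(t\alpha)$ explicitly as a pullback and expand it using the vector field $\xi_{t\alpha} = t\,\xi_\alpha$, then to estimate the Taylor coefficients in $t$ by differentiating under the integral sign (or by a Cauchy-estimate argument on a complexified neighborhood), and finally to invoke compactness of $L^3$ to make the constants uniform.

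\textbf{Step 1: An explicit formula for $F(t\alpha)$.} I would use the same flow argument as in the proof of Proposition \ref{prop:inf1}: letting $\psi_s$ denote the flow of the fiberwise-constant vector field $\xi_{\alpha}$ on $T^*L^3$ (well-defined for small time since each fiber is a vector space and $\xi_\alpha$ is the constant-section vector field), the section $t\alpha : L^3 \to T^*L^3$ is $\psi_{t} \circ 0$ where $0$ is the zero section. Hence $F(t\alpha) = (t\alpha)^*(\tilde\om) = 0^*(\psi_{t}^*\tilde\om)$, and
\[
\frac{d^n}{dt^n}\Big|_{t=0} F(t\alpha) = 0^*\big({\mathcal L}_{\xi_\alpha}^n \tilde\om\big),
\]
with an analogous formula for $d(F(t\alpha))$ using $d\tilde\om$ (or ${\mathcal L}_{\xi_\alpha}^n(d\tilde\om)$, noting $d$ commutes with pullback and Lie derivative). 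This reduces everything to estimating the restriction to the zero section of iterated Lie derivatives of the analytic form $\tilde\om$ along $\xi_\alpha$.

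\textbf{Step 2: Pointwise Cauchy estimates.} Since $\tilde\om$ is real analytic on $T^*L^3$, in analytic coordinates near a point $(p,0)$ of the zero section its components are given by convergent power series; $\xi_\alpha$ is the constant-coefficient (in the fiber direction) vector field whose coefficients are the components $\alpha_i(x)$ of $\alpha$. Writing $g_p(t) := F(t\alpha)_p$ and $h_p(t) := d(F(t\alpha))_p$, these are (for fixed $p$) real analytic functions of $t$ near $0$ obtained by substituting $t\alpha_i(x)$, $t\,\p_j\alpha_i(x)$ into convergent power series whose radius of convergence is bounded below by a constant depending only on $\tilde\om$ (hence, after covering $L^3$ by finitely many charts and using compactness, by a uniform constant $\rho>0$). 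A Cauchy estimate on $|t|\le \rho/(2\max(1,\|\alpha\|_{C^1;p}))$ then yields $\big\|\tfrac{d^n}{dt^n}|_{t=0}F(t\alpha)_p\big\| \le n!\,A\,C^n\,\|\alpha\|_{C^1;p}^n$, and differentiating the defining expressions finitely many times (which costs only more derivatives of the analytic coefficients and of $\alpha$, still controlled by $\|\alpha\|_{C^k;p}$ for the relevant $k$) upgrades this to the $C^{k-1}$-norm estimates \eqref{eq:omn-pointwise}, \eqref{eq:domn-pointwise}. The same argument applies verbatim to $d(F(t\alpha))$.

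\textbf{The main obstacle.} The genuinely delicate point is making the constants $A$ and $C$ independent of $p$ and of $\alpha$: one must verify that the lower bound on the radius of convergence of the relevant power series is uniform over the compact zero section (this uses compactness and the analyticity statement Proposition \ref{thm:NK-anal} together with the analyticity of $L^3$ from Proposition \ref{prop:ana}, which together make $\tilde\om$ analytic on a fixed neighborhood), and that the combinatorics of bounding $\|{\mathcal L}_{\xi_\alpha}^n\tilde\om\|$ in terms of $\|\alpha\|_{C^k;p}^n$ produces the claimed $n!\,C^n$ growth rather than something worse. The bookkeeping is easiest if one does not expand the Lie derivatives by hand but instead argues directly via the Cauchy integral formula for $t \mapsto F(t\alpha)_p$ after observing that this map extends holomorphically in $t$ to a fixed disc, with the size of the disc and the sup-norm on it controlled uniformly by $\|\alpha\|_{C^k;p}$ via the analyticity of $\tilde\om$; then \eqref{eq:omn-pointwise} and \eqref{eq:domn-pointwise} are immediate from $\big|\tfrac{d^n}{dt^n}|_{t=0}g_p\big| = n!\,|a_n|$ and the Cauchy estimate $|a_n|\le M\rho^{-n}$. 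The concluding sentence of the lemma—pointwise real analyticity in $t$ and $C^{k-1}$-regularity of the coefficient forms—then follows since a power series with the displayed coefficient bounds has positive radius of convergence, and each coefficient $\tfrac{1}{n!}\tfrac{d^n}{dt^n}|_{t=0}F(t\alpha) = \tfrac{1}{n!}0^*({\mathcal L}_{\xi_\alpha}^n\tilde\om)$ is manifestly a $C^{k-1}$-form on $L^3$ because $\tilde\om$ is smooth, $L^3$ is analytic, and $\alpha$ is $C^k$.
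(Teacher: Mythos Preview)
Your approach is correct and is essentially the same as the paper's: both rest on Cauchy-type estimates coming from the real analyticity of $\tilde\om$ on a neighborhood of the zero section, combined with compactness of $L^3$ to make the radius uniform. The paper carries this out by writing $\tilde\om$ explicitly in analytic bundle coordinates $(x^i;y^r)$, expanding $F(t\alpha)$ as a sum of three terms, and invoking a Cauchy-type bound $|X^n(\phi)|\le n!\,A_1C_1^n\|X\|^n$ for the analytic coefficient functions along the fiber direction (this is the same iterated-derivative identity you obtain from $\mathcal L_{\xi_\alpha}^n$); you instead apply the Cauchy integral formula directly to the holomorphic map $t\mapsto F(t\alpha)_p$. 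One small point: drop the $\max(1,\cdot)$ in your choice of radius---using $\rho/(2\|\alpha\|_{C^1;p})$ is what actually produces the factor $\|\alpha\|_{C^k;p}^n$ on the right-hand side when $\|\alpha\|$ is small. Otherwise the argument matches.
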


\begin{proof} Let us describe this situation in local coordinates. Namely, we let $x = (x^i)$ be analytic coordinates on $L^3$ and let $(x; y) = (x^i; y^r)$ be the corresponding bundle coordinates on $T^*L^3 \to L^3$. With this, we can 
write
\[
\tilde \om = f_{ij} dx^i \wedge dx^j + g_{ir} dx^i \wedge dy^r + h_{rs} dy^r \wedge dy^s,
\]
where the coefficients are analytic functions.

After shrinking this coordinate neighborhood, we may assume that the coefficients of the Riemannian metric $g$ are uniformely bounded and, moreover, by \cite[Proposition 2.2.10]{KP2013}, we may assume that for suitable constants $A_1, C_1 > 0$ we have for $\phi \in \{f_{ij}, g_{ir}, h_{rs}\}$ and any continuous vector field of the form $X = a^r(x) \p_{y^r}$ with arbitrary continuous coefficients $a^r(x)$ the pointwise estimates
\begin{eqnarray}\label{eq:der-coeff-bd}
|X^n (\phi)| \leq n! A_1 C_1^n \|X\|^n.
\end{eqnarray}

Let $\alpha \in \Om_{C^k}^1(L^3)$ be $C^k$-regular, and suppose that $\|\alpha\|_{C^0}$ is sufficiently small, so that its graph is given in these coordinates by $y = \hat \alpha(x)$ for $C^k$-regular functions $\hat \alpha(x) = (\hat \alpha^r(x))_{r=1,2,3}$. Thus, in these coordinates we have
\[
\xi_{\alpha}Ê= \hat \alpha_r(x) \dfrac \p{\p y^r},
\]
and for $|t|$ small we have
\begin{eqnarray*}
F(t\alpha) & = & f_{ij}(x; t \hat \alpha(x)) dx^i \wedge dx^j + t g_{ir}(x; t \hat \alpha(x)) dx^i \wedge d\hat \alpha^r(x)\\ 
& & \qquad \qquad \qquad + t^2 h_{rs}(x; t \hat \alpha(x)) d\hat \alpha^r(x) \wedge d\hat \alpha^s(x)\\ & = & \bigg(f_{ij}(x; t \hat \alpha(x)) + t g_{[ir}(x; t \hat \alpha(x)) \frac{\p \hat \alpha^r}{\p x_{j]}}\\
& & \qquad \qquad \qquad + t^2 h_{rs}(x; t \hat \alpha(x)) \frac{\p \hat \alpha^r}{\p x_{[i}} \frac{\p \hat \alpha^s}{\p x_{j]}}\bigg) dx^i \wedge dx^j.
\end{eqnarray*}
Thus, for fixed $x$, the map $t \mapsto F(t\alpha)_x$ yields an analytic curve in $\Lambda^2 T_x^*L^3$. For the derivatives w.r.t. $t$ of the coefficient functions, note that for $\phi \in \{f_{ij}, g_{ir}, h_{rs}\}$ and $m \in \N$ we have
\[
\left. \dfrac{d^m}{dt^m} \right|_{t=0} \phi(x; t \hat \alpha(x)) = (\xi_\alpha)^m(\phi)_{(x;0)},
\]
whence
\begin{eqnarray*}
\left. \dfrac{d^n}{dt^n} \right|_{t=0} f_{ij}(x; t \hat \alpha(x)) & = & (\xi_\alpha)^n(f_{ij})_{(x;0)}\\
\left. \dfrac{d^n}{dt^n} \right|_{t=0} (t g_{ir}(x; t \hat \alpha(x))) & = & n (\xi_\alpha)^{n-1}(g_{ir})_{(x;0)}\\
\left. \dfrac{d^n}{dt^n} \right|_{t=0} (t^2 h_{rs}(x; t \hat \alpha(x))) & = & n (n-1) (\xi_\alpha)^{n-2}(h_{rs})_{(x;0)}.
\end{eqnarray*}
Thus, for the derivatives we get 
\begin{eqnarray}
\nonumber
\left. \dfrac{d^n}{dt^n} \right|_{t=0} F(t\alpha)_x & = & \bigg((\xi_\alpha)^n(f_{ij})_{(x;0)} + n (\xi_\alpha)^{n-1}(g_{[ir})_{(x;0)} \frac{\p \hat \alpha^r}{\p x_{j]}}\\
\label{eq:omn-local}
& & + n (n-1) (\xi_\alpha)^{n-2}(h_{rs})_{(x;0)} \frac{\p \hat \alpha^r}{\p x_{[i}} \frac{\p \hat \alpha^s}{\p x_{j]}}\bigg) dx^i \wedge dx^j.
\end{eqnarray}

Now (\ref{eq:der-coeff-bd}) implies that
\[
|(\xi_\alpha)^m(\phi)_{(x;0)}| \leq m! A_1 C_1^n \|\alpha_x\|^m
\]
for $\phi \in \{f_{ij}, g_{ir}, h_{rs}\}$. Furthermore, since the coefficients of the metric are bounded, there is a constant $C_2 > 0$ such that at every $x$ and for all $i, r$
\[
\left\| \dfrac{\p \hat \alpha^r}{\p x^i} \right\|_{C^k;x} \leq C_2 \|\alpha\|_{C^{k-1};x}.
\]
Since also $\|dx^i \wedge dx^j\|_{C^k;x}$ is uniformely bounded, and as $\|\alpha_x\| \leq \|\alpha\|_{C^k;x}$ for all $k \geq 0$, (\ref{eq:omn-pointwise}) follows for all $p \in L^3$ parametrized by this coordinate system and for constants $A,C$ depending on these coordinates. But as $L^3$ is compact, it can be covered by finitely many such neighborhoods, whence (\ref{eq:omn-pointwise}) follows for all $p \in L^3$ for uniform constants $A, C > 0$.

The estimate on the derivatives of $d(F(t\alpha))_p = (t\alpha)^*(d\tilde \om)$ follows analogously.
\end{proof}

It follows from Lemma \ref{lem:omn-pointwise} that for each $n \in \N_0$, there are symmetric tensors
\begin{equation} \label{eq:omn-global}
\tilde \om_n: \odot^n \Om^1_{C^1}(L^3) \longrightarrow \Om^2_{C^0}(L^3), \qquad d \tilde \om_n: \odot^n \Om^1_{C^1}(L^3) \longrightarrow \Om^3_{C^0}(L^3),
\end{equation}
satisfying 
\[
\|\la \tilde \om_n; \alpha \ra\|_{C^{k-1}} \leq n! A C^n \|\alpha\|_{C^k} \qquad \mbox{and} \qquad \|\la d \tilde \om_n; \alpha \ra\|_{C^{k-1}} \leq n! A C^n \|\alpha\|_{C^k}
\]
by (\ref{eq:omn-pointwise}) and (\ref{eq:domn-pointwise}), respectively, such that for all $\alpha \in \Om^1_{C^1}(L^3)$ with $\|\alpha_p\| < C^{-1}$ with $C>0$ from (\ref{eq:omn-pointwise}) we have the power series expansion
\begin{equation} \label{eq:Series-section}
F(\alpha) = \sum_{n=1}^\infty \dfrac1{n!} \la (\tilde \om_n); \alpha \ra, \qquad dF(\alpha) = \sum_{n=1}^\infty \dfrac1{n!} \la (d\tilde \om_n); \alpha \ra.
\end{equation}
In particular, $\la (d\tilde \om_n); \alpha \ra = d \la (\tilde \om_n)_p; \alpha \ra$. It is important to point out that for $\alpha \in \Om^1_{C^k}(L^3)$ we have $d \la (\tilde \om_n)_p; \alpha \ra \in \Om^1_{C^{k-1}}(L^3)$, even though it is the exterior differential of $\la (\tilde \om_n)_p; \alpha \ra \in \Om^1_{C^{k-1}}(L^3)$. 

For $\alpha \in \Om^1_{C^1}(L^3)$ the flow of $\xi_\alpha$ on $T^*L^3$ is given by the formula
\[
\Phi^{\xi_\alpha}_t(\beta) = \beta + t \alpha,
\]
whence 
\[
(\Phi^{\xi_\alpha}_t)^*(\tilde \om_{\Phi^{\xi_\alpha}_t(p)}) = F(t \alpha)_p,
\]
so that
\begin{equation} \label{eq:Series-section2}
\la (\tilde \om_n); \alpha \ra = \left. \dfrac{d^n}{dt^n} \right|_{t=0} F(t\alpha) = ((\Ll_{\xi_\alpha})^n (\tilde \om)).
\end{equation}
Since the vector fields $\xi_\alpha, \xi_\beta$ for $\alpha, \beta \in \Om^1_{C^1}(L^3)$ commute, it follows that the symmetrization of $(\tilde \om_n)_p$ is given by
\begin{equation} \label{eq:Series-section3}
\la (\tilde \om_n); \alpha_1, \ldots, \alpha_n \ra = \Ll_{\xi_{\alpha_1}} \cdots \Ll_{\xi_{\alpha_n}}(\tilde \om).
\end{equation}

\

\noindent{\it Proof of Theorem \ref{thm:moduli}.}
Observe that by (\ref{eq:Series-section2}) the description of $\la \tilde \om_n; \alpha \ra$ is given in (\ref{eq:omn-local}) and hence a first oder differential operator, and the same holds for the description of $\la d \tilde \om_n; \alpha \ra$. Thus, the maps $\tilde \om_n$ and $d \tilde \om_n$ from (\ref{eq:omn-global}) extend to a symmetric $n$-linear map
\begin{eqnarray*}
\tilde \om_n: \odot^n L^2_k(T^*L^3) & \longrightarrow & L^2_{k-1}(\Lambda^2 T^*L^3),\\
d\tilde \om_n: \odot^n L^2_k(T^*L^3) & \longrightarrow & L^2_{k-1}(\Lambda^3 T^*L^3),
\end{eqnarray*}
where for a vector bundle $E \to L^3$ we denote by $L^2_k(E)$ the Sobolev space of sections of $E$ with regularity $(2;k)$. By (\ref{eq:omn-pointwise}) we obtain the estimates
\begin{eqnarray*}
\|\la \tilde \om_n; \alpha \ra\|_{L^2_{k-1}(\Lambda^2T^*L^3)} & \leq & n! A C^n \|\alpha\|_{L^2_k(T^*L^3)} \qquad \mbox{and}\\
 \|\la d\tilde \om_n; \alpha \ra\|_{L^2_{k-1}(\Lambda^3T^*L^3)} & \leq & n! A C^n \|\alpha\|_{L^2_k(T^*L^3)}.
\end{eqnarray*}
In particular, for $\alpha \in L^2_k$ with $\|\alpha\|_{L^2_k} < C^{-1}$, the power series $\sum_{n=0}^\infty \dfrac1{n!} \la \tilde \om_n; \alpha \ra$ converges in $L^2_{k-1}(\Lambda^2 T^*L^3)$, thus defining the maps
\[
\begin{array}{rllllll}
F_k: B_{C^{-1}}(0) & \longrightarrow & L^2_{k-1}(\Lambda^2 T^*L^3), & \qquad &  \alpha & \longmapsto & \sum_{n=0}^\infty \dfrac1{n!} \la \tilde \om_n; \alpha \ra\\[2ex]
dF_k: B_{C^{-1}}(0) & \longrightarrow & L^2_{k-1}(\Lambda^3 T^*L^3), & \qquad &  \alpha & \longmapsto & \sum_{n=0}^\infty \dfrac1{n!} \la d\tilde \om_n; \alpha \ra
\end{array}
\]
where $B_{C^{-1}}(0) \subset L^2_k(T^*L^3)$ is the ball centered at $0$. Clearly, $F_k$ and $dF_k$ are analytic maps between Banach spaces in the sense of Definition \ref{def:anal}, and moreover, because of (\ref{eq:Series-section}) and (\ref{eq:Series-section2}), $F_k$ and $dF_k$ extend the maps $F: \Om^1_{C^k}(L^3) \to \Om^2_{C^{k-1}}(L^3)$ and $dF: \Om^1_{C^k}(L^3) \to \Om^3_{C^{k-1}}(L^3)$ from before.

Thus, if we let
\[
L_k^2(T^*L^3 \oplus \R)_0 := \{(\alpha, f) \in L_k^2(T^*L^3 \oplus \R) \mid \int f\; dg = 0\},
\]
then $\hat F$ from (\ref{eq:om1}) extends for all $k \geq 1$ to a map
\[
\begin{array}{lrll}
\hat F_k : & \big(B_{C^{-1}}(0) \subset L_k^2(T^*L^3 \oplus \R)_0 \big) & \longrightarrow & L_{k-1}^2(T^*L^3 \oplus \R)_0\\
& (\alpha, f) & \longmapsto & (*F_k(\alpha) + df) - \dfrac13 *dF_k(\alpha)
\end{array}
\]
which is analytic at $(0,0)$ as $F_k$ and $dF_k$ are analytic. Observe that $\p \hat F_k|_{(0,0)}$ is the extension of $\p \hat F|_{(0,0)}$ which by Proposition \ref{prop:kern} is self adjoint and elliptic. Thus, for all $k \geq 1$
\[
\begin{array}{rclcl}
\ker(\p \hat F_k|_{(0,0)}) = \coker(\p \hat F_k|_{(0,0)}) = T_{L^3}.
\end{array}
\]

Denote by $\pi = \pi_k: L_k^2(T^*L^3 \oplus \R)_0 \to T_{L^3}$ the orthogonal projection which is continuous as $T_{L^3}$ is finite dimensional. Then
\begin{equation}Ê\label{eq:F-proj}
\underline{\hat F}_k := \pi - \hat F_k: \big(B_{C^{-1}}(0) \subset L_k^2(T^*L^3 \oplus \R)_0 \big)\longrightarrow L_{k-1}^2(T^*L^3 \oplus \R)_0
\end{equation}
is again analytic at $(0,0)$, and its differential at $(0,0)$ is an isomorphism. Therefore, the inverse function theorem for analytic maps of Banach spaces (Proposition \ref{prop:invb}) implies that there is an analytic inverse of $\underline{\hat F}_k$:
\[
G_k: (U_k \subset L_{k-1}^2(T^*L^3 \oplus \R)_0) \longrightarrow (V_k \subset L_k^2(T^*L^3 \oplus \R)_0).
\]
Let $U := U_k \cap T_{L^3}$ which is an open neighborhood of the origin and independent of $k$ as $T_{L^3} \subset \cap_{k \geq 1} L_k^2(T^*L^3 \oplus \R)_0)$, and define the map
\begin{equation} \label{eq:def-tau}
\tau: U \longrightarrow T_{L^3}, \qquad \tau(\alpha) := \pi(G_k(\alpha)) - \alpha.
\end{equation}
Then $\tau$ is again analytic, and clearly, $\tau(0) = 0$. Moreover, for $\alpha \in T_{L^3}$ we have
\[
\p \tau|_0(\alpha) = \pi(\p G_k|_{(0,0)}(\alpha)) - \alpha = \pi((\p \underline{\hat F}_k|_{(0,0)})^{-1}(\alpha)) - \alpha = 0,
\]
so that $\p \tau|_0 = 0$.

Observe that for $\alpha \in U$ we have $G_k(\alpha) \in \Om^1_{C^\infty}(L^3)$ for all $k \geq 1$, so that we may omit the subscript $k$. Indeed, the smoothness of $G_k(\alpha)$ follows since $U \subset T_{L^3}$ consists of smooth (in fact analytic) forms; moreover, if $G_k(\alpha) = (\tilde \alpha, \tilde f)$, then $F_k(\tilde \alpha, \tilde f) = \alpha \in T_{L^3}$. As $\hat F_k(\tilde \alpha, \tilde f) = \hat F(\tilde \alpha, \tilde f)$ by the smoothness of $(\tilde \alpha, \tilde f)$, (\ref{eq:image-in-T3}) implies that $\tilde f = 0$ and for $\tilde \alpha = G(\alpha)$ we have
\[
F(G_\alpha) = \dfrac13 d*F(G_\alpha).
\]

Let us define the $C^\infty$-map
\[
\Phi: U_0 \times L^3 \longrightarrow M^6, \qquad (\alpha, p) \longmapsto Exp_p(\xi_{G(\alpha)}),
\]
and let $L^3_\alpha := \Phi(L^3 \times \{\alpha\})$. Evidently, $L^3_0 = L^3$ as $\Phi(0,p) = p$. If $L' \subset M^6$ is a closed submanifold which is $C^1$-close to $L^3$, then we may write $L'$ as the image of $p \mapsto Exp_p(\xi_\beta)$ for some $\beta \in \Om^1_{C^1}(L^3) \subset L^2_1(T^*L^3)$ with $\|\beta\|_{C^1}$ sufficiently small so that $\beta \in V_1$ and hence $\alpha := \underline{\hat F}_1(\beta) \in U_1$. By definition, $\om|_{L'} = F(\beta)$, whence by (\ref{eq:F-proj}) $L'$ is Lagrangian iff $F(\beta) = 0$ iff $\underline{\hat F}_1(\beta) = \pi(\beta) \in T_{L^3}$ iff $\alpha := \pi(\beta) \in U_1 \cap T_{L^3} = U$ and $\beta = G(\alpha)$, and the latter is the case iff $\alpha = \pi(G(\alpha))$ iff $\tau(\alpha) = 0$. That is $L'$ is Lagrangian iff $L' = L^3_\alpha$ for some $\alpha \in \tau^{-1}(0)$ as claimed.
\hfill{\QED}

Theorem \ref{thm:moduli} reduces our consideration of smooth  deformations  of Langrangian submanifolds
to the one in analytic category as follows. Namely, in analogy to Definition \ref{def:formal-smooth} we define

\begin{definition} \label{def:formal-smooth-tau}
Let $L^3 \subset M^6$ be a closed Lagrangian submanifold, and let $\tau: U \to T_{L^3}$ be the real analytic map from Theorem \ref{thm:moduli}.
\begin{enumerate}
\item An element $\alpha \in T_{L^3}$ is called
{\it smoothly unobstructed} or {\it smoothly integrable} w.r.t. $\tau$, if there is a smooth curve $\alpha(t)$ in $T_{L^3}$ such that $\alpha(0) = 0$ and $\dot \alpha(0) = \alpha$, and such that $\tau(\alpha(t)) \equiv 0$. Otherwise, $\alpha$ is called {\it smoothly obstructed}. 
\item An element $\alpha \in T_{L^3}$ is called
{\it formally unobstructed} or {\em formally integrable} w.r.t. $\tau$, if  there  exists  a sequence  $\alpha_1 = \alpha, \alpha_2, \cdots \in T_{L^3}$ such that
the formal power series
\begin{equation}
\alpha_t := \sum_{n=1}^\infty \alpha_n t^n \in R[[t]] \otimes_\R T_{L^3}
\end{equation}
satisfies
\[
\tau(\alpha_t)= 0 \in R[[t]] \otimes_\R T_{L^3}
\]
as a formal power series in $t$.
\end{enumerate}
\end{definition}

\begin{corollary} \label{cor:formal-tau}
Let $L^3 \subset M^6$ be as above. Then for $\alpha \in T_{L^3}$ the following are equivalent.
\begin{enumerate}
\item
$\alpha$ is smoothly obstructed.
\item
$\alpha$ is formally obstructed.
\item
$\alpha$ is smoothly obstructed w.r.t. $\tau$.
\item
$\alpha$ is formally obstructed w.r.t. $\tau$.
\end{enumerate}

In particular, $L^3 \subset M^6$ is regular iff $\tau \equiv 0$.
\end{corollary}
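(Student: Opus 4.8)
The plan is to set up a ``transport dictionary'' between Lagrangian deformations of $L^3$ and zeros of the finite-dimensional analytic map $\tau$ supplied by Theorem \ref{thm:moduli}, and then to invoke analyticity to collapse all four notions of obstructedness into one. First I would prove the key equivalence between the ``intrinsic'' picture (deformations through $F$) and the ``$\tau$-picture''. Recall from the proof of Theorem \ref{thm:moduli} that $G = G_k$ is the analytic inverse of $\underline{\hat F}_k = \pi - \hat F_k$, that $\tau(\alpha) = \pi(G(\alpha)) - \alpha$, and that for $\alpha \in \tau^{-1}(0)$ the deformation $L^3_\alpha$ is exactly $\Phi(L^3 \times \{\alpha\})$ with $\Phi(\alpha, p) = Exp_p(\xi_{G(\alpha)})$; moreover every $C^1$-small Lagrangian deformation of $L^3$ arises this way. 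This already gives a bijection between germs of Lagrangian deformations $s(t)$ of $L^3$ with $s(0) = L^3$, $\dot s(0) = \alpha$, and germs of curves $\alpha(t)$ in $T_{L^3}$ with $\alpha(0) = 0$, $\dot\alpha(0) = \alpha$, $\tau(\alpha(t)) \equiv 0$: given $s(t)$, write $s(t) = Exp(\xi_{\beta(t)})$ for a $C^1$-curve $\beta(t)$ in $\Om^1(L^3)$ with $\beta(0)=0$, set $\alpha(t) := \pi(\beta(t)) = \underline{\hat F}(\beta(t))$ (using $F(\beta(t)) = 0$), so $\beta(t) = G(\alpha(t))$ and $\tau(\alpha(t)) = \pi(G(\alpha(t))) - \alpha(t) = \pi(\beta(t)) - \alpha(t) = 0$; and $\dot\alpha(0) = \pi(\dot\beta(0)) = \pi(\alpha) = \alpha$ since $\alpha \in T_{L^3}$. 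Conversely, from $\alpha(t)$ with $\tau(\alpha(t)) \equiv 0$ one sets $s(t) := Exp(\xi_{G(\alpha(t))})$, which is a smooth Lagrangian deformation with $\dot s(0) = \alpha$ (differentiate $\pi(\dot s(0)) = \pi(\p G|_0(\alpha)) = \pi((\p\underline{\hat F}|_0)^{-1}\alpha) = \alpha$, using that $\dot s(0) \in T_{L^3}$ because the deformation stays Lagrangian). A completely parallel argument at the level of formal power series — the maps $F_k$, $\hat F_k$, $G$, $\tau$ are all given by convergent power series in finitely many (resp. Banach-space) variables by Lemma \ref{lem:omn-pointwise} and can thus be composed with formal power series — yields: $\alpha$ is formally unobstructed (Definition \ref{def:formal-smooth}) if and only if it is formally unobstructed w.r.t. $\tau$ (Definition \ref{def:formal-smooth-tau}). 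This proves (1)$\Leftrightarrow$(3) and (2)$\Leftrightarrow$(4).

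The remaining and genuinely nontrivial implication is (4)$\Rightarrow$(3) (its converse being the easy direction noted before Theorem \ref{thm:moduli}), i.e.: if $\alpha \in T_{L^3}$ is formally integrable w.r.t. $\tau$, then it is smoothly integrable w.r.t. $\tau$. Here is where analyticity of $\tau$ is essential. Since $\tau: U \to T_{L^3}$ is a real analytic map between open subsets of finite-dimensional real vector spaces with $\tau(0)=0$ and $\p\tau|_0 = 0$, the set $\tau^{-1}(0)$ is a real analytic subvariety of $U$. The hypothesis gives a formal arc $\alpha_t = \sum_{n \geq 1}\alpha_n t^n$ with $\alpha_1 = \alpha$ and $\tau(\alpha_t) = 0$ in $\R[[t]]\otimes T_{L^3}$. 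By the Artin approximation theorem (or, since we are over $\R$ and the equations are analytic, by Artin's theorem on approximation of formal solutions of analytic equations), for every $N$ there is a \emph{convergent} arc $\tilde\alpha(t) = \sum_{n\geq 1}\tilde\alpha_n t^n$ with $\tau(\tilde\alpha(t)) \equiv 0$ and $\tilde\alpha_n = \alpha_n$ for $n \leq N$; in particular taking $N \geq 1$ we get a convergent (hence smooth) arc in $\tau^{-1}(0)$ with $\tilde\alpha(0) = 0$ and $\dot{\tilde\alpha}(0) = \alpha_1 = \alpha$. Thus $\alpha$ is smoothly integrable w.r.t. $\tau$. Combining with the first paragraph, all of (1)--(4) are equivalent.

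Finally, the last sentence: $L^3$ is regular (Definition \ref{def:formal-smooth}(3)) means every $\alpha \in T_{L^3}$ is formally unobstructed; by the equivalence (2)$\Leftrightarrow$(4) this holds iff every $\alpha \in T_{L^3}$ is formally unobstructed w.r.t. $\tau$. Since $\tau(0) = 0$ and $\p\tau|_0 = 0$, the linear arc $\alpha_t = \alpha t$ already has $\tau(\alpha_t) = O(t^2)$, and an easy induction shows that formal integrability of \emph{every} $\alpha \in T_{L^3}$ forces all Taylor coefficients of $\tau$ to vanish: if $\tau = \sum_{k \geq 2}\tau_k$ with $\tau_k$ homogeneous of degree $k$ and $\tau_k \not\equiv 0$ is the lowest nonvanishing term, pick $\alpha$ with $\tau_k(\alpha,\dots,\alpha)\neq 0$; then no formal arc starting in direction $\alpha$ can kill the order-$k$ term, contradicting formal unobstructedness. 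Hence $\tau \equiv 0$. Conversely if $\tau \equiv 0$ then trivially every $\alpha$ is formally (indeed smoothly) unobstructed w.r.t. $\tau$, so $L^3$ is regular.

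The main obstacle is the step (4)$\Rightarrow$(3), i.e.\ passing from a \emph{formal} to a \emph{convergent} solution of $\tau = 0$; this is exactly the content of Artin approximation and is the place where the analyticity of $\tau$ (established via Theorem \ref{thm:moduli}, which rests on the real-analyticity of the nearly K\"ahler structure, Proposition \ref{thm:NK-anal}) is used in an essential way. Everything else is bookkeeping with the power-series expansions of $F$ and $G$ from Lemma \ref{lem:omn-pointwise} and the proof of Theorem \ref{thm:moduli}.
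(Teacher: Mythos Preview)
Your proposal is correct and follows essentially the same route as the paper: the equivalences (1)$\Leftrightarrow$(3) and (2)$\Leftrightarrow$(4) come from the ``transport dictionary'' supplied by Theorem \ref{thm:moduli} (you spell this out in more detail than the paper does), the key step (3)$\Leftrightarrow$(4) is Artin approximation applied to the finite-dimensional analytic map $\tau$, and the final regularity statement is the same lowest-nonvanishing-term argument. The only cosmetic difference is that your ``easy induction'' for the last part is really just the direct observation that the $t^k$-coefficient of $\tau(\alpha_t)$ equals $\tau_k(\alpha,\dots,\alpha)$ regardless of the higher $\alpha_n$, exactly as the paper states.
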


\begin{proof}
According to Theorem \ref{thm:moduli}, a smooth family of Lagrangian submanifolds with $s(0) = L^3$ and $\dot s(0) = \alpha$ must be of the form $s(t) = L^3_{\alpha(t)}$ for a smooth curve $\alpha(t) \in T_{L^3}$ with $\alpha(0) = 0$ and $\dot \alpha(0) = \alpha$ such that $\tau(\alpha(t)) \equiv 0$. That is, $\alpha$ is smoothly obstructed iff it is smoothly obstructed w.r.t. $\tau$.

Likewise, working on the level of formal power series, it follows from Theorem \ref{thm:moduli} that $\alpha$ is formally obstructed iff it is formally obstructed w.r.t. $\tau$.

But as $\tau: U \to T_{L^3}$ is an analytic function in finitely many variables, the equivalence of smooth and formal obstruction of $\alpha \in T_{L^3}$ follows immediately from  Artin's approximation theorem \cite[Theorem 1.2]{Artin1968}.

To show the last statement, let $\tau = \sum_{n=2}^\infty \tau_n$ be the analytic expansion of $\tau$ with $\tau_n \in \odot T_{L^3}Ê\to T_{L^3}$. If $\tau \not \equiv 0$, the there is a minimal $n \geq 2$ such that $\tau_n \neq 0$, so that there is some $\alpha \in T_{L^3}$ such that $\tau_n^\alpha := \tau_n(\alpha, \ldots, \alpha) \neq 0$. Let $\alpha_t := \sum_{n=1}^\infty \alpha_n t^n$ be a formal power series with $\alpha_1 = \alpha$. Then $\tau(\alpha_t) = t^n \tau_n^\alpha \mod t^{n+1}$, so that $\tau(\alpha_t) \neq 0$. That is, $\alpha$ is formally obstructed.

Thus, if all $\alpha \in T_{L^3}$ are formally unobstructed, then $\tau \equiv 0$.
\end{proof}

\begin{corollary}
Each connected component of the moduli space of closed regular Lagrangian submanifolds of $M^6$, equipped with the $C^1$-topology, is an analytic manifold whose tangent space at each $L^3$ may be canonically identified with $T_{L^3}$.
\end{corollary}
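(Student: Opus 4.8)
The plan is to assemble the statement from the machinery already in place. By Theorem~\ref{thm:moduli}, a neighbourhood of a given $L^3$ in the moduli space is identified via $\alpha \mapsto L^3_\alpha$ with $\tau^{-1}(0) \subset U \subset T_{L^3}$, where $\tau$ is the real analytic map with $\tau(0)=0$ and $\p\tau|_0 = 0$. If $L^3$ is regular, then by Corollary~\ref{cor:formal-tau} we have $\tau \equiv 0$ on $U$, so $\tau^{-1}(0) = U$ is an open subset of the finite dimensional vector space $T_{L^3}$. Since regularity is an open condition --- if $L^3$ is regular then every nearby Lagrangian $L^3_\alpha$, $\alpha \in U$, is again regular, because the local model $\tau$ vanishes identically near the origin and hence near every point it represents --- this furnishes, for each point of the moduli space of regular Lagrangians, an analytic chart onto an open subset of $T_{L^3}$.

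Next I would check that these charts are analytically compatible, so that the moduli space of regular Lagrangians is an analytic manifold. For two nearby base points $L^3$ and $L'$ with overlapping charts $U \subset T_{L^3}$ and $U' \subset T_{L'}$, the transition map is the composite of the two parametrizations $\alpha \mapsto L^3_\alpha$ and its inverse for the second base point. Each parametrization is built from the analytic inverse function $G_k$ of $\underline{\hat F}_k$ and the real analytic normal exponential map $Exp$ (analytic by Proposition~\ref{prop:ana} and the remark following it), composed with the orthogonal projection $\pi$ onto the finite dimensional space $T_{L^3}$; hence the transition maps are real analytic. Finally, the tangent space to the moduli space at $L^3$ is, by construction of the chart, canonically $T_{L^3} = K_3(L^3) = \ker \p F|_0$ from (\ref{def:kernel}), since in the regular case the parametrization $\alpha \mapsto L^3_\alpha$ has differential at $0$ equal to $\alpha \mapsto \frac{d}{dt}|_{t=0}Exp(\xi_{t\alpha}) = s_\alpha$, identifying $T_{L^3}$ with the space of infinitesimal Lagrangian deformations.

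I expect the main subtlety to be the openness/consistency point: one must verify that the identification $T_{L^3_\alpha} \cong K_3(L^3_\alpha)$ varies analytically with $\alpha$, i.e. that the finite dimensional spaces $T_{L^3_\alpha}$ fit together into an analytic vector bundle over the chart, and that the kernels computed at different base points agree on overlaps. This follows because $\p F|_0$ is the restriction of the self adjoint elliptic operator $\p\hat F|_{(0,0)}$ (Proposition~\ref{prop:kern}), whose kernel dimension is locally constant for an analytic family of such operators, and the eigenspace projections depend analytically on the operator; combined with $\tau \equiv 0$, this gives the bundle structure and the canonical identification. The rest is bookkeeping with the analytic inverse function theorem already invoked in the proof of Theorem~\ref{thm:moduli}.
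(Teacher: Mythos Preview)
Your approach is essentially the same as the paper's: invoke Corollary~\ref{cor:formal-tau} to conclude $\tau\equiv 0$ for a regular $L^3$, so that the chart $\alpha\mapsto L^3_\alpha$ from Theorem~\ref{thm:moduli} parametrizes all nearby Lagrangians by the open set $U\subset T_{L^3}$. The paper's proof stops there in a single sentence, whereas you go on to address openness of regularity, analytic compatibility of charts, and constancy of $\dim T_{L^3_\alpha}$; these are legitimate points the paper leaves implicit, and your treatment of them (via upper semicontinuity of the kernel dimension of the elliptic family $\p\hat F$ combined with the lower bound coming from the actual deformations) is correct.
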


\begin{proof} Let $L^3 \subset M^6$ be a regular Lagrangian submanifold, so that the map $\tau: U \to T_{L^3}$ from Theorem \ref{thm:moduli} vanishes identically. Then the map $\Phi: L^3 \times U$ induces an analytic parametrization of all $C^1$-close Lagrangian submanifolds of $L^3$, given by $\alpha \mapsto L^3_\alpha$.
\end{proof}

Let us now describe the analytic expansion of $\tau: U \to T_{L^3}$ from Theorem \ref{thm:moduli}.

\begin{proposition} \label{prop:tau-series}
For $\alpha \in T_{L^3}$, define $\hat \tau_n^\alpha \in \Om^1(L^3)$ for $n \in \N$ recursively by
\begin{equation} \label{eq:formal-solution}
\begin{array}{lll}\hat \tau_1^\alpha & = & \alpha\\[2ex]
\hat \tau_n^\alpha & = & \dfrac{n!}3 \displaystyle{\sum_{r=2}^n \sum_{|I|=n} \dfrac1{r!I!} \left( \left(\p \underline{\hat F}|_{(0,0)}\right)^{-1} * (3 - d) \la \om_r; \hat \tau_{i_1}^\alpha, \ldots, \hat \tau_{i_r}^\alpha \ra\right)}_{\Om^1(L^3)},
\end{array}
\end{equation}
summing over all multi-indices $I = (i_1, \ldots, i_r)$, setting $|I| := i_1 + \ldots + i_r$ and $I! := i_1! \cdots i_r!$. Then
\begin{equation} \label{eq:power-tau}
\tau(\alpha) = \sum_{n=2}^\infty \pi(\hat \tau_n^\alpha) =: \sum_{n=2}^\infty \tau_n^\alpha
\end{equation}
with the orthogonal projection $\pi: \Om^1(L^3) \to T_{L^3}$. In fact,
\begin{equation} \label{eq:tau-n}
\tau_n^\alpha = n! \displaystyle{\sum_{r=2}^n \sum_{|I|=n} \dfrac1{r!I!} \pi \left(* \la \om_r; \hat \tau_{i_1}^\alpha, \ldots, \hat \tau_{i_r}^\alpha \ra\right)}.
\end{equation}
\end{proposition}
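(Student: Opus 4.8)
The plan is to extract the recursion for $\hat\tau_n^\alpha$ directly from the defining relation $\tau(\alpha) = \pi(G(\alpha)) - \alpha$ from \eqref{eq:def-tau}, together with the power-series expansion \eqref{eq:Series-section} of $F$ (equivalently of $\hat F$). First I would set $(\tilde\alpha, \tilde f) := G(\alpha)$ for $\alpha \in U \subset T_{L^3}$; by the discussion after the proof of Theorem~\ref{thm:moduli} we know $\tilde f = 0$, so $\tilde\alpha \in \Om^1_{C^\infty}(L^3)$ and $\underline{\hat F}(\tilde\alpha, 0) = \alpha$, i.e.
\[
\pi(\tilde\alpha) - \hat F(\tilde\alpha, 0) = \alpha,
\qquad\text{equivalently}\qquad
\hat F(\tilde\alpha,0) = \pi(\tilde\alpha) - \alpha = \tau(\alpha).
\]
Using \eqref{eq:om1} and \eqref{eq:Series-section}, expand $\hat F(\tilde\alpha,0) = *F(\tilde\alpha) - \tfrac13 *dF(\tilde\alpha) = \sum_{r\geq 1} \tfrac1{r!} * (1 - \tfrac13 d)\la\tilde\om_r;\tilde\alpha\ra$, where the $r=1$ term is exactly $\p\hat F|_{(0,0)}(\tilde\alpha,0) = (*d\tilde\alpha - 3\tilde\alpha, 0)$ by \eqref{eq:dpf1} (here I use $*(3-d) = -*(d-3) = -(*d - 3*)$ acting appropriately on $1$-forms, matching the normalization in \eqref{eq:formal-solution}; I would be careful to reconcile the sign and the factor of $3$ between $*F - \tfrac13 *dF$ and the stated $\tfrac13 *(3-d)$).

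Next I would make the Ansatz $\tilde\alpha = \sum_{n\geq 1}\tfrac1{n!}\hat\tau_n^\alpha$ with $\hat\tau_1^\alpha = \alpha$ (this is forced since $\p G|_{(0,0)}$ is the identity on $T_{L^3}$, so the linear term of $G(\alpha)$ in $\alpha$ is $\alpha$ itself), substitute into the displayed identity $\hat F(\tilde\alpha,0) = \tau(\alpha)$, and compare homogeneous components of degree $n$ in $\alpha$. The left-hand side at order $n$ splits into the $r=1$ contribution $\tfrac1{n!}\p\hat F|_{(0,0)}(\hat\tau_n^\alpha,0) = \tfrac1{n!}(*d - 3)\hat\tau_n^\alpha$ plus the contributions from $r\geq 2$, which by the multilinearity of $\tilde\om_r$ and the multinomial expansion of $(\sum_n \tfrac1{n!}\hat\tau_n^\alpha)^{\odot r}$ equal
\[
\sum_{r=2}^{n}\ \sum_{|I| = n}\ \frac1{r!\,I!}\, *\Bigl(1 - \tfrac13 d\Bigr)\la\tilde\om_r; \hat\tau_{i_1}^\alpha,\dots,\hat\tau_{i_r}^\alpha\ra .
\]
Since $\p\hat F|_{(0,0)}$ is self-adjoint and elliptic with kernel and cokernel $T_{L^3}$ (Proposition~\ref{prop:kern}), it restricts to an isomorphism on $\ker(\pi)$, and the right-hand side $\tau(\alpha)$ lies in $T_{L^3} = \operatorname{im}(\pi)$. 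Decomposing each order-$n$ equation via the orthogonal splitting $\Om^1(L^3) = T_{L^3} \oplus \ker(\pi)$: the $\ker(\pi)$-component determines $(\mathrm{id} - \pi)\hat\tau_n^\alpha$ by inverting $\p\hat F|_{(0,0)}$ applied to the projection of the $r\geq 2$ terms, which yields \eqref{eq:formal-solution}; and the $T_{L^3}$-component of the order-$n$ equation reads $\pi$ of the $r\geq 2$ terms $= \tau_n^\alpha$, which gives \eqref{eq:tau-n} and hence \eqref{eq:power-tau} after noting $\tau_n^\alpha = \pi(\hat\tau_n^\alpha)$ (the latter because $\pi\circ(\p\hat F|_{(0,0)})^{-1}(\ker\pi\text{-part}) = 0$, so only the $r\geq 2$ projection survives under $\pi$). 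Convergence of the series is inherited from the convergence of $G$ and the estimates \eqref{eq:omn-pointwise}, already established in the proof of Theorem~\ref{thm:moduli}.

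The main obstacle I anticipate is purely bookkeeping rather than conceptual: getting the combinatorial factors $\tfrac{n!}{r!\,I!}$ exactly right when passing from the unnormalized $\la\tilde\om_r;\cdot\ra$ (which already carries an implicit symmetrization and the $1/r!$) to the $\hat\tau_n$ normalized by $1/n!$, and simultaneously tracking the operator $*(1-\tfrac13 d)$ versus the stated $\tfrac13 *(3-d)$ and the appearance of $(\p\underline{\hat F}|_{(0,0)})^{-1}$ (as opposed to $(\p\hat F|_{(0,0)})^{-1}$) — these differ by the projection $\pi$, and one must check they agree on the relevant subspace. A secondary subtlety is justifying termwise comparison of the two convergent power series in $\alpha$, which is legitimate because both sides are analytic maps of Banach spaces near $0$ (Definition~\ref{def:anal}) whose Taylor coefficients are uniquely determined. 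Once the normalizations are pinned down, \eqref{eq:formal-solution}--\eqref{eq:tau-n} drop out of the order-by-order identification.
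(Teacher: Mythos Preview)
Your proposal is correct and follows essentially the same approach as the paper: expand $G(\alpha)$ as a power series, substitute into the defining relation $\underline{\hat F}(G(\alpha)) = \alpha$, and identify terms order by order, inverting the linearization on the complement of $T_{L^3}$ and reading off $\tau_n^\alpha$ as the $T_{L^3}$-component. The only cosmetic difference is that the paper introduces an explicit scalar parameter $t$ and expands $G_k(t\alpha)$ in powers of $t$, which makes ``order $n$'' unambiguous, whereas you speak of ``homogeneous components of degree $n$ in $\alpha$''; also, the paper keeps the $C_0^\infty$-component $f_n^\alpha$ throughout and projects to $\Om^1(L^3)$ at the end rather than invoking $\tilde f = 0$ up front --- but these are equivalent bookkeeping choices, and the normalization issues you flag (the factor $\tfrac13$, the passage from $\p\hat F$ to $\p\underline{\hat F}$) are handled exactly as you anticipate.
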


\begin{proof}
For $\beta \in \Om^1(L^3)$ and any $k \geq 1$, the map $t \mapsto \underline{\hat F}_k(t \beta)$ from (\ref{eq:F-proj}) is real analytic at $t = 0$ with the expansion
\begin{equation} \label{eq:F-powerseries}
\underline{\hat F}_k(t \beta) = t\; \p \underline{\hat F}_k|_{(0,0)}(\beta) - \sum_{n=2}^\infty \dfrac{t^n}{3n!} *(3 - d) \la \tilde \om_n; \beta, \ldots, \beta\ra.
\end{equation}
using the definition of $\hat F$ in (\ref{eq:om1}) and the expansions in (\ref{eq:Series-section}). Since $\p \underline{\hat F}_k|_{(0,0)}$ is invertible, it follows that the map $t \mapsto G_k(t \alpha)$ is also analytic at $t=0$, and we expand it as a power series for an element $\alpha \in T_{L^3}$ as
\begin{equation} \label{eq:F-1-powerseries}
G_k(t \alpha) = \sum_{n=1}^\infty \dfrac{t^n}{n!} \la g_n; \alpha \ldots \alpha \ra =: \sum_{n=1}^\infty \dfrac{t^n}{n!} g_n^\alpha,
\end{equation}
for $n$-multilinear maps $g_n: \odot^n T_{L^3} \longrightarrow \Om^1(L^3) \oplus C^\infty(L^3)_0$ which we decompose as
\[
g_n^\alpha = \hat \tau_n^\alpha + f_n^\alpha.
\]

Applying $\underline{\hat F}$ to this equation, and using (\ref{eq:F-powerseries}), it follows that $g_n^\alpha$ must be solutions of the equation
\begin{eqnarray} \label{eq:Taylor-inverse}
\nonumber t \alpha & = & t\; \p \underline{\hat F}_k|_{(0,0)}(g_1^\alpha) + \sum_{n=2}^\infty t^n \bigg( \dfrac{1}{n!} \p \underline{\hat F}_k|_0(g_n^\alpha)\\
& & \qquad \qquad \qquad - \dfrac 13 \sum_{r=2}^n \sum_{|I| = n} \dfrac1{r! I!} *(3 - d) \la \om_r; \hat \tau_{i_1}^\alpha, \ldots, \hat \tau_{i_r}^\alpha \ra\bigg).
\end{eqnarray}
Comparing the $t$-coefficient and using that $\p \underline{\hat F}_k|_{(0,0)}$ is the identity on $T_{L^3}$, it follows that $g_1^\alpha = \alpha$, whence $\hat \tau_1^\alpha = \alpha$ and $f_1^\alpha = 0$, and comparing the $t^n$-coefficients for $n > 1$ yields 
\begin{eqnarray*}
g_n^\alpha  & = & \dfrac{n!}3 \sum_{r=2}^n \sum_{|I|=n} \dfrac1{r!I!} \left(\p \underline{\hat F}|_{(0,0)}\right)^{-1} *(3 - d) \la \om_r; \hat \tau_{i_1}^\alpha, \ldots, \hat \tau_{i_r}^\alpha \ra.
\end{eqnarray*}
That is, $\hat \tau_n^\alpha$ from (\ref{eq:formal-solution}) is the $\Om^1(L^3)$-component of $g_n^\alpha$.

Now by (\ref{eq:def-tau}), the series expansion of $\tau$ is
\begin{eqnarray*}
\tau(t \alpha) = \pi(G_k(t\alpha)) - t \alpha = \sum_{n=1}^\infty t^n \pi(g_n^\alpha) - t \alpha = \sum_{n=2}^\infty t^n \pi(\hat \tau_n^\alpha),
\end{eqnarray*}
using that $g_1^\alpha = \alpha$. This shows (\ref{eq:power-tau}), and (\ref{eq:tau-n}) follows as $\pi$ is the projection onto the $(+1)$-eigenspace of $\underline{\hat F}_k|_{(0,0)}$ by definition, so that $\pi \left(\underline{\hat F}_k|_{(0,0)}\right)^{-1} = \pi$.
\end{proof}

\begin{definition} Let $L^3 \subset M^6$ be a closed Lagrangian submanifold. The {\em Kuranishi map of $L^3$} is the symmetric bilinear map
\[
K: T_{L^3} \times T_{L^3}Ê\longrightarrow \Om^2(L^3), \qquad K(\alpha_1, \alpha_2) := \la \om_2; \alpha_1, \alpha_2\ra = \Ll_{\xi_{\alpha_1}} \Ll_{\xi_{\alpha_2}}(\om).
\]
\end{definition}

Thus, by (\ref{eq:tau-n}) we have
\[
\tau_2^\alpha = \pi *K(\alpha, \alpha),
\]
whence the we obtain the following result.

\begin{proposition}\label{prop:obstr} 
Assume that $L^3 \subset M^6$ is a Lagrangian submanifold, and let $\alpha \in T_{L^3}$.
If $\alpha$ is smoothly unobstructed, then $\pi *K(\alpha, \alpha) = 0$, i.e.,
\[
\int_{L^3} K(\alpha, \alpha) \wedge \beta = 0 \qquad \mbox{for all $\beta \in T_{L^3}$.}
\]
\end{proposition}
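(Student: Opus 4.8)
The plan is to identify $\pi\,{*}K(\alpha,\alpha)$ with the quadratic term $\tau_2^\alpha$ of the obstruction map $\tau$ from Theorem \ref{thm:moduli} — an identification already recorded in the display immediately preceding the statement — then to deduce the vanishing of this term from the hypothesis that $\alpha$ is smoothly unobstructed, and finally to rephrase that vanishing as the claimed integral condition via Hodge duality on the $3$-manifold $L^3$.

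First I would pass to the formal picture. By Corollary \ref{cor:formal-tau}, since $\alpha\in T_{L^3}$ is smoothly unobstructed it is formally unobstructed with respect to $\tau$; hence there is a formal power series $\alpha_t=\sum_{n\ge 1}\alpha_n t^n$ with $\alpha_n\in T_{L^3}$ and $\alpha_1=\alpha$ such that $\tau(\alpha_t)=0$ in $\R[[t]]\otimes_\R T_{L^3}$. (Alternatively one may argue directly from Theorem \ref{thm:moduli}: a smooth Lagrangian deformation of $L^3$ with velocity $\alpha$ is of the form $L^3_{\alpha(t)}$ for a smooth curve $\alpha(t)\in T_{L^3}$ with $\alpha(0)=0$, $\dot\alpha(0)=\alpha$ and $\tau(\alpha(t))\equiv 0$, and one then differentiates twice at $t=0$.) Writing the analytic expansion $\tau=\sum_{n\ge 2}\tau_n$ with $\tau_n\colon\odot^nT_{L^3}\to T_{L^3}$ homogeneous of degree $n$ (Proposition \ref{prop:tau-series}), the lowest-order contribution of $\tau_n(\alpha_t,\dots,\alpha_t)$ is $t^n\tau_n(\alpha,\dots,\alpha)$, so the $t^2$-coefficient of $\tau(\alpha_t)$ is precisely $\tau_2(\alpha,\alpha)=\tau_2^\alpha$. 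Comparing $t^2$-coefficients in $\tau(\alpha_t)=0$ yields $\tau_2^\alpha=0$, and by the identity $\tau_2^\alpha=\pi\,{*}K(\alpha,\alpha)$ this gives $\pi\,{*}K(\alpha,\alpha)=0$.

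It remains to translate this into the asserted integral identity. Since $\pi\colon\Om^1(L^3)\to T_{L^3}$ is the $L^2$-orthogonal projection onto $T_{L^3}$, the vanishing $\pi\,{*}K(\alpha,\alpha)=0$ is equivalent to the $1$-form ${*}K(\alpha,\alpha)$ being $L^2$-orthogonal to every $\beta\in T_{L^3}$. On the oriented Riemannian $3$-manifold $L^3$ one has ${*}{*}=\mathrm{id}$ on forms of every degree, and for a $2$-form $\gamma$ and a $1$-form $\beta$ one checks directly that $({*}\gamma)\wedge({*}\beta)=\gamma\wedge\beta$; hence
\[
\la {*}K(\alpha,\alpha),\beta\ra_{L^2}=\int_{L^3}\bigl({*}K(\alpha,\alpha)\bigr)\wedge{*}\beta=\int_{L^3}K(\alpha,\alpha)\wedge\beta .
\]
Therefore $\int_{L^3}K(\alpha,\alpha)\wedge\beta=0$ for all $\beta\in T_{L^3}$, which is the assertion.

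I do not anticipate a genuine obstacle: the analytic substance is already carried by Theorem \ref{thm:moduli} (existence of $\tau$ with $\tau(0)=0$ and $\p\tau|_0=0$), by Corollary \ref{cor:formal-tau} (equivalence of formal and smooth obstruction), and by the explicit second-order term $\tau_2^\alpha=\pi\,{*}K(\alpha,\alpha)$ computed just above the statement. The only points needing any care are bookkeeping ones — verifying that the homogeneous parts $\tau_n$ with $n\ge 3$ cannot contribute to the $t^2$-coefficient (immediate from homogeneity and $\alpha_1=\alpha$), and getting the Hodge-star signs right in dimension $3$ — and both are routine.
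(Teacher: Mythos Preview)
Your proposal is correct and follows essentially the same approach as the paper: use Theorem~\ref{thm:moduli} (or, equivalently, Corollary~\ref{cor:formal-tau}) to obtain a curve in $T_{L^3}$ with $\tau\equiv 0$, extract the $t^2$-coefficient to get $\tau_2^\alpha=0$, and invoke the identity $\tau_2^\alpha=\pi *K(\alpha,\alpha)$. The paper's proof differentiates the smooth curve directly rather than passing through the formal series, but this is the alternative you already note parenthetically; your additional verification of the integral reformulation via Hodge duality is a small elaboration the paper leaves implicit.
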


\begin{proof} If $\alpha \in T_{L^3}$ is smoothly unobstructed, then by Theorem \ref{thm:moduli} there is a curve $\alpha(t)$ in $T_{L^3}$ with $\alpha(0) = 0$ and $\dot \alpha(0) = \alpha$ such that $\tau(\alpha(t)) \equiv 0$. As $\p \tau|_0 = 0$, we have
\[
0 = \left. \dfrac{d^2}{dt^2}\right|_{t=0} \tau(\alpha(t)) = 2 \tau_2(\dot \alpha, \dot \alpha)|_{t=0} = 2 \tau_2^\alpha = 2 \pi *K(\alpha, \alpha).
\]
From this, the claim follows.
\end{proof}

Evidently, with increasing $n$, the the $n$-th order formal obstructions of an element $\alpha \in T_{L^3}$ become increasingly involved.

\section{Examples}\label{subs:ex}
 In this section we wish to apply our results to Lagrangian submanifolds of the standard nearly  K\"ahler  sphere $(S^6, J_0, g_0)$ and put our work into the context of the deformation results in \cite{Lotay2012}.
 
Let $\O$ denote the octonians, which is the unique $8$-dimensional normed division algebra. It may be orthogonally decomposed into $\O = \R \cdot 1 \oplus \Im \O$, and there is a vector cross product $\times$ on $\Im \O$, defined as the imaginary part of the octonian multiplication, i.e.
\[
x \times y = - \la x; y\ra + x \times y \qquad \mbox{ for all $x, y \in \O$},
\]
where $\la.;.\ra$ denotes the scalar product on $\O$. Then the automorphism group $G_2$ of $\O$ preserves the inner product and acts on the $7$-dimensional space $\Im \O$, and it clearly preserves the cross product $\times $ on $\Im \O$. Furthermore, we define the $3$-form $\varphi$ on $\Im \O$ by
\[
\varphi(x, y, z) := \la x \times y; z\ra,
\]
so that $\varphi$ is invariant under the action of $G_2$; indeed, $G_2$ can also be described as the group of automorphisms on $\Im\O$ preserving $\phi$.

Let $S^6 \subset \Im \O$ denote the unit sphere with the round metric $g_0$ induced by the inner product $\la.;.\ra$ on $\Im \O$. Then there is an orthogonal almost complex structure $J_0$ on $S^6$, defined as
$$J_0|_p (u) := p \times u $$
for  $p \in S^ 6$  and  $u \in T_p  S^6 \subset  \Im \O$. Since the cone metric over $S^6$ is the flat metric on $\Im \O$ which is clearly a (flat) $G_2$-manifold, the result of B\"ar \cite[\S 7]{Baer1993} already mentioned in section \ref{sub:nk6} implies that $(S^6, g_0, J_0)$ is a strict nearly-K\"ahler manifold. It follows that the action of $G_2$ on $S^6 \subset \Im \O$ preserves the nearly-K\"ahler structure and is in fact the invariance group of this structure. Moreover, this action of $G_2$ on $S^6$ is transitive, with stabilizer $SU(3) \subset G_2$, whence we may write the sphere as a homogeneous space
\[
S^6 = G_2/SU(3).
\]

We call a Lagrangian submanifold $L^3 \subset S^6$ {\em linearly full}, if it is not contained in a totally geodesic sphere $S^5 \subset S^6$. For instance (cf. \cite[Example 6.11.]{Lotay2011}), if $\Sigma \subset \C\P^2$ is a holomorphic curve, then the inverse image of $\Sigma$ under the Hopf fibration $S^5 \to \C\P^2$ yields a Lagrangian submanifold $L^3_\Sigma \subset S^5 \subset S^6$ which is not linearly full. In fact, any Lagrangian submanifold $L^3 \subset S^6$ which is not linearly full is of this type \cite[Theorem 1.1.]{Lotay2011}.

If $\Sigma \subset \C\P^2$ is a curve of degree $d=1$, then the Hopf lift is a totally geodesic $3$-sphere which is preserved by $SO(4) \subset G_2$, whence the space $\Mm_{d=1}$ of all $L_{\Sigma}$ with $\Sigma \subset \C\P^2$ of degree one is diffeomorphic to the symmetric space $G_2/SO(4)$.

For $d=2$, the sets of smooth conics in $\C\P^3$ is the homogeneous space $Sl(3, \C)/SO(3, \C)$, whence the moduli space of the Hopf lifts of curves of degree $d = 1, 2$ is given as
\[
\Mm_{d=2} = G_2 \times_{SU(3)} Sl(3, \C)/SO(3, \C).
\]
In particular, $\Mm_{d=2}$ is a manifold of dimension $16$.

In \cite{Mashimo1985}, Mashimo gave a complete classification of homogeneous Lagrangian sumbmanifolds $L^3 \subset S^6$, i.e., $L^3$ is the orbit of some subgroup $H \subset G_2$. Indeed, there are, up to $G_2$-equivalence, five inequivalent Lagrangian submanifolds; the description of the induced metric is given in \cite{DVV1990}.

\begin{enumerate}
\item
The totally geodesic Lagrangian sphere $L_0^3 := S^3 \subset S^6$, given as the intersection of $S^6$ with a coassociative subspace $V^4 \subset \O$, i.e., such that $\varphi|_V \equiv 0$.
\item
The ``squashed sphere''
\[
L^3_1 := \left\{ \frac{\sqrt 5}3 q i \bar{q} + \frac23 \bar{q} \epsilon \; : \; q \in Sp(1)\right\},
\]
using the decomposition $\O = \H \oplus \H \epsilon$ for some unit octonian $\epsilon \in \H^\perp$. Clearly, $L^3_1$ is again a sphere, and the metric induced by this embedding is a Berger metric, invariant under $U(2) \subset G_2$. That is, every oriented isometry of $L^3_1$ extends to $S^6$.
\item
The space $L_2^3 := L_\Sigma^3$ with the notation from above, where $\Sigma \subset \C\P^2$ is the quadric
\[
z_1^2 + z_2^2 + z_3^2 = 0.
\]
Then $L^3_2 \subset S^5(1) \subset S^6(1)$ is not linearly full and diffeomorphic to $\R\P^3$. In fact, it is acted on simply transitively by the subgroup $SO(3) \subset SU(3) \subset G_2$, where $SU(3)$ is the stabilizer of $(S^5)^\perp$. The induced metric on $L^3_2$ is again a Berger metric, but the only oriented isometries which extend to $S^6$ are the elements of $SO(3)$.
\item
There is a (unique) subgroup $SO(3) \subset G_2$ which acts irreducibly on $\Im\O$, thus identifying $\Im\O$ with $\Hh^3(\R^3)$, the space of harmonic cubic polynomials in the three variables $x,y,z$, as an $SO(3)$-module.

If $p \in \Hh(\R^3)$ is completely reducible, then -- up to a multiple -- it is contained in the $SO(3)$-orbit of one of the two non-equivalent polynomials
\[
p_3(x,y,z) = x(x^2-3y^2) \qquad \mbox{or} \qquad p_4(x,y,z) := xyz,
\]
and we let $L_k^3 := SO(3) \cdot \frac{p_k}{\|p_k\|} \subset S^6$ for $k = 3,4$ be the $SO(3)$-orbit of these polynomials. Then $L_3^3 = SO(3)/D_3$ and $L_4^3 = SO(3)/A_4$, where $D_3$ and $A_4$ denote the dihedral and the tetrahedral group, respectively.

The induced metric on $L_3^3$ is a Berger metric, whereas the induced metric on $L_4^3$ is a metric of constant curvature.
\end{enumerate}

In \cite{Lotay2012}, Lotay calculated the formal tangent spaces $T_{L^3_i}$ for all these examples $L_i^3 \subset S^6$, $i = 0, \ldots, 4$. Furthermore, he discussed the regularity of these spaces, using an approach different from ours which heavily uses the $G_2$-invariance of the nearly K\"ahler structure on $S^6$. His results can be summarized in the following table.


\begin{center}
{\small
{\bf Table: Properties of homogeneous Lagrangian subspaces of $S^6$}\\
\begin{tabular}{|r|c|c|c|c}
\hline
& $\dim T_{L^3_i}$ & $L_i^3$ regular? & deformation space of $L_i^3$\\
\hline 
\hline
&&& \vspace{-3mm}\\
$i=0$ & $8$ & yes & $\Mm_{d=1} = G_2/SO(4)$\\
&&& \vspace{-3mm}\\
$i=1$ & $10$ & yes & $G_2/U(2)$\\
&&& \vspace{-3mm}\\
$i=2$ & $16$ & yes & $\Mm_{d=2} = G_2 \times_{SU(3)} Sl(3, \C)/SO(3,\C)$\\
&&& \vspace{-3mm}\\
$i=3$ & $41$ & ? &\\
&&& \vspace{-3mm}  \\
$i=4$ & $22$ & ? &\\
\hline
\end{tabular}
}
\end{center}

\begin{remark}\label{rem:killing}
\begin{enumerate}
\item
The rigidity of the Lagrangian sphere $S^3(1)$  also follows  from the  Simons rigidity theorem  which states that each geodesic  sphere  in $S^n$  is rigid  as minimal submanifold   up to the  motion  of the isometry  group $SO(n+1)$ \cite[Theorem 5.2.3]{Simons1968}. 
\item
The term  rigidity  we use  here   corresponds  to the  same notion of rigidity  of \cite[Definition 4.12, p. 28]{Lotay2012}.  Our notation of  regularity  corresponds to  Lotay's  notion of  Jacobi integrability \cite[Definition 3.18, p.18]{Lotay2012}.  Lotay's  notion  of   stability  corresponds to  a special case of our notion of  regularity \cite[Definition 4.12, p. 27-28]{Lotay2012}.
\end{enumerate}
\end{remark}

\section{Appendix. Real analytic  Banach manifolds  and   implicit  function    theorem}

In this Appendix  we  recall the notion  of  a real  analytic Banach manifold, following   Eells \cite{Eells1966}, see also \cite[\S 2]{Douady1966}, and the  analytic   inverse  function theorem, following Douady  \cite[\S 6]{Douady1966}.  Then we     prove a simple criterion for a smooth  mapping  to be analytic (Lemma \ref{lem:analb}). We also derive    the  analytic   implicit  function theorem (Proposition \ref{prop:iftb})  from the  analytic inverse  function theorem.  We always  work over   the    field $\R$ of real numbers, if not   sp ecified  otherwise.

Let $E$ and $F$ be real Banach spaces, and $U$ an open subset of  $E$. Denote by $L(E, F)$  the vector space of all continuous linear maps  $u: E\to F$. Let us recall that  a map $\phi: U \to F$ is
called {\it Fr\'echet differentiable} at $x_0\in U$ if there is an element $\Phi \in L(E, F)$ such that
$$ \lim_{v \to 0}\frac{|\phi(x_0+ v) - \phi(x) - \Phi(v)|_F}{|v|_E} = 0.$$
In this case $\Phi(v)$ is unique  and   also denoted by  $\phi_*(x, v)$ or $d\phi(x; v)$.  We regard $d\phi$ as a  mapping
from $U$ to  $L(E, F)$.

Denote by $SL^r(E, F)$ the  class of continuous  symmetric
$r$-linear  maps $E \times  _{ r \, times} \times E \to F$. Inductively, 
$\phi$ is  of class $C^r$, if $d^r \phi: U \to  SL^r(E, F)$ is  continuous.

\begin{definition}\label{def:anal} Let $E$  and $F$ be  two Banach spaces and $U$   an open  subset in $E$.  A  smooth map
$h : U \to F$ is called  {\it analytic  at a point $a \in U$},   if 
there exists $r > 0$ such that
 for all $|x| < r$ we have $ (a +x) \in U$ and
\begin{equation}
 h(a + x) = \sum_{k=0}^\infty \frac{ d^k h (a; x, \cdots, x)}{k!}. \label{eq:eells}
\end{equation}
\end{definition}

To recognize  analytic    maps  among smooth maps      we   use  the following  Lemma.
 
\begin{lemma}\label{lem:analb}   Let $U$ be an open subset of a  Banach space $E$. A smooth mapping   $f$ from $U $   to a  Banach space  $F$ is   analytic at  a point $x \in U$ iff  there exists a  positive number $r$  depending on $x$ such that the following holds.   For any affine  line  $l$ through  $x$ the restriction of $f$ to  $l\cap U$ is  analytic  at $x$  with   radius of convergence  at least $r$.
\end{lemma}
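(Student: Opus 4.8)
The plan is to prove the two implications separately; the forward direction is a tautology, and the reverse direction rests on the $k$-homogeneity of the Taylor coefficients together with a one-variable identity argument.

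For the ``only if'' direction: if $f$ is analytic at $x$, there is $\rho>0$ with $B(x,\rho)\subset U$ and $f(x+h)=\sum_{k\ge0}\frac1{k!}\,d^kf(x;h,\dots,h)$ for $\|h\|<\rho$. Given an affine line $l=\{x+tv:t\in\R\}$ with $\|v\|=1$, the $k$-homogeneity of each summand gives $f(x+tv)=\sum_{k\ge0}\frac1{k!}\,d^kf(x;v,\dots,v)\,t^k$ for $|t|<\rho$, so $f|_{l\cap U}$ is real analytic at $x$ with radius of convergence at least $\rho$; thus $r:=\rho$ works.

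For the ``if'' direction I would proceed as follows. Fix $r>0$ as in the hypothesis and shrink it so that $B(x,r)\subset U$. The first point, which is where smoothness of $f$ is used, is that for a fixed unit vector $v$ the one-variable function $g_v(t):=f(x+tv)$ is $C^\infty$ with $g_v^{(k)}(0)=d^kf(x;v,\dots,v)$: this is the higher-order chain rule for the affine map $t\mapsto x+tv$, whose derivatives of order $\ge 2$ vanish. Hence the Taylor coefficients of $g_v$ at $0$ are exactly $\frac1{k!}\,d^kf(x;v,\dots,v)$, and these are the same symmetric multilinear objects for every line through $x$. By hypothesis $g_v$ is real analytic at $0$ with radius of convergence $\ge r$, and since $\{x+tv:|t|<r\}\subset B(x,r)\subset U$ a one-variable identity argument on the interval $(-r,r)$ (the difference of $g_v$ and the sum of its Taylor series is real analytic on $(-r,r)$ and vanishes near $0$, hence vanishes there) gives $g_v(t)=\sum_{k\ge0}\frac1{k!}\,d^kf(x;v,\dots,v)\,t^k$ for all $|t|<r$. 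Now for arbitrary $h$ with $0<\|h\|<r$, set $v:=h/\|h\|$ and evaluate this identity at $t=\|h\|$; using $d^kf(x;v,\dots,v)\,\|h\|^k=d^kf(x;h,\dots,h)$ yields $f(x+h)=\sum_{k\ge0}\frac1{k!}\,d^kf(x;h,\dots,h)$ (the case $h=0$ being trivial), so $f$ is analytic at $x$ with radius $r$ in the sense of Definition \ref{def:anal}.

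The subtle points here are of a bookkeeping nature: one must note that the radius of convergence of $g_v$ is intrinsic to its Taylor coefficients (so it is unaffected by how small $l\cap U$ happens to be) and shrink the ambient $r$ to fit inside $U$ before transporting the one-variable equality to the ball. If instead Definition \ref{def:anal} is read as demanding normal convergence of the series on $B(x,r)$, one extra input is needed, and this is where I expect the genuine work to be: from the hypothesis one gets $\limsup_k\|d^kf(x;v,\dots,v)/k!\|^{1/k}\le 1/r$ for every unit $v$, and then a Baire category argument on the complete space $\overline{B(0,1)}\subset E$ applied to the closed sets $A_m:=\{u:\ \|d^kf(x;u,\dots,u)\|\le m^k k!\ \text{for all }k\}$, together with the polarization inequality $\|d^kf(x;\cdot)\|\le \frac{k^k}{k!}\sup_{\|v\|\le1}\|d^kf(x;v,\dots,v)\|$, upgrades this pointwise bound to a bound on the multilinear norms and forces normal convergence on a (possibly smaller) ball --- the standard ``Gateaux analytic plus locally bounded implies analytic'' mechanism. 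That step would be the main obstacle under the stronger reading.
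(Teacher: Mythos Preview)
Your argument is correct and follows essentially the same route as the paper: restrict to a line through $x$, compute the one-variable Taylor coefficients via the chain rule as $d^kf(x;v,\dots,v)/k!$, and then use $k$-homogeneity to pass from unit directions to arbitrary $h$ and match Definition~\ref{def:anal}. The paper's proof is terser (and in fact a little sloppier about whether the ball of radius $r$ sits in $U$), while you handle the bookkeeping more carefully; your extra paragraph on normal convergence is not needed here, since the paper's Definition~\ref{def:anal} only demands pointwise equality of $f$ with its Taylor series on a ball, not uniform or normal convergence.
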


\begin{proof}  
The ``only if" assertion of Lemma \ref{lem:analb}  is    straightforward.
Now let us   prove the  ``if'' assertion of  Lemma \ref{lem:analb}.
Since    we  do not assume  any condition on $f$, w.l.o.g. we can  assume  that $x =0 \in E$.
By the assumption    the  sphere  $S(r)$ of radius $r$     and  with center at  $x = 0$ lies in  $U$.  Let $  s \in  S(r)$. 
Set $g(t): = f(ts)$.  By the assumption  $g(t)$  is analytic  at $0$  with   the convergence radius at least $r$.
Since  
$$\frac{dg}{dt}   = df (ts; s)$$
and hence
$$\frac{d^n g}{dt^n}   = d^nf (ts; s,  \cdots ,  s )$$
we have  the Taylor    expansion  of  $g$ at zero  is
\begin{equation}
 g(t) = f(0) + \sum_{ k =0} ^ \infty  \frac{d^kf (0, s, \cdots, s)t^k}{k!}.\label{eq:taylor1}
 \end{equation}
 Comparing (\ref{eq:taylor1}) with (\ref{eq:eells}), we  obtain immediately  Lemma \ref{lem:analb}.
\end{proof}

Having  the notion of   an analytic   mapping between  Banach vector  spaces, it  is straightforward to define the notion of  an  analytic Banach manifold.
Now we   formulate  the  analytic  inverse  function theorem  that has been proved  in \cite{Douady1966}.

\begin{proposition}\label{prop:invb} (\cite[Theorem 1]{Douady1966}) Let $X$ and $Y$ be   two  analytic Banach  manifolds and $f : X \to Y$  an analytic map.   Assume that $b = f(a)$ and $T_a f : T_a X \to T_b Y$  is an isomorphism. Then   $f$ is a local isomorphism. 
\end{proposition}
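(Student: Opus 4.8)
The statement is the analytic inverse function theorem for Banach manifolds, so after choosing analytic charts about $a$ and $b$ it suffices to treat the model situation: $E,F$ real Banach spaces, $U\subset E$ open, $f\colon U\to F$ analytic at $0\in U$ with $f(0)=0$, and $A:=df|_0\colon E\to F$ an isomorphism of Banach spaces. Composing with $A^{-1}$ we may assume $F=E$ and $df|_0=\mathrm{id}_E$. Since an analytic map is in particular $C^1$, the classical inverse function theorem in Banach spaces already yields a $C^1$ local inverse $h$ with $h(0)=0$, and this $h$ is the unique continuous local inverse of $f$. The task is thus to upgrade the regularity of $h$ to analytic; the plan is to construct an \emph{analytic} local inverse directly as a convergent power series and then identify it with $h$ by uniqueness.

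First I would produce the formal inverse. Write the homogeneous expansion $f(x)=x+\sum_{k\ge 2}P_k(x)$ with $P_k(x)=\tfrac1{k!}d^kf(0;x,\cdots,x)$; by the standard estimates for convergent power series of continuous polynomials between Banach spaces there are $C,\rho>0$ with $\|P_k\|\le C\rho^{-k}$, hence $\|\hat P_k\|\le C(\rho/e)^{-k}$ for the associated symmetric $k$-linear map $\hat P_k$. Seeking $h(y)=y+\sum_{n\ge 2}Q_n(y)$ with $Q_n$ homogeneous of degree $n$ and substituting into $f(h(y))=y$, comparison of the parts homogeneous of degree $n$ gives, for each $n\ge 2$,
\[
Q_n=-\sum_{k=2}^{n}\ \sum_{\substack{j_1+\cdots+j_k=n\\ j_i\ge 1}}\hat P_k\bigl(Q_{j_1},\dots,Q_{j_k}\bigr),\qquad Q_1:=\mathrm{id}.
\]
Since there are at least two indices $j_i$, each satisfies $j_i\le n-1$, so this recursion determines $Q_n$ uniquely from $P_2,\dots,P_n$ and $Q_2,\dots,Q_{n-1}$; hence $f$ has a unique formal power series inverse.

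The heart of the argument is the convergence of $\sum_nQ_n$, which I would establish by the method of majorants. Taking norms in the recursion gives $\|Q_n\|\le\sum_{k=2}^{n}\sum_{j_1+\cdots+j_k=n}C\sigma^{-k}\|Q_{j_1}\|\cdots\|Q_{j_k}\|$ with $\sigma:=\rho/e$. Define scalars $q_1:=1$ and $q_n:=\sum_{k=2}^{n}\sum_{j_1+\cdots+j_k=n}C\sigma^{-k}q_{j_1}\cdots q_{j_k}$; an immediate induction shows $q_n\ge 0$ and $\|Q_n\|\le q_n$ for all $n$. The generating function $w(s):=\sum_{n\ge 1}q_ns^n$ satisfies the scalar equation $\Phi(w)=s$, where $\Phi(w):=w-\dfrac{(C/\sigma^2)\,w^2}{1-w/\sigma}$ is a rational function, analytic near $w=0$ with $\Phi(0)=0$ and $\Phi'(0)=1$. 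By the one-variable analytic inverse function theorem, $w=\Phi^{-1}(s)$ is analytic on some disc $|s|<r$, so $\sum_nq_n\,{r'}^{\,n}<\infty$ for every $r'<r$, and therefore $\sum_n\|Q_n\|\,s^n<\infty$ for small $s$. Consequently $h(y)=y+\sum_nQ_n(y)$ converges absolutely and uniformly on a ball about $0$ and defines an analytic map; since $f\circ h=\mathrm{id}$ holds as an identity of formal power series and both sides converge, $h$ is a genuine local inverse of $f$, and by uniqueness of the continuous local inverse it coincides with the $C^1$ inverse above. Undoing the reductions and the passage to charts, $f$ is an analytic local isomorphism.

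The main obstacle is the majorant step: one must verify carefully that the Banach-valued, multilinear recursion for $Q_n$ is dominated termwise by the scalar recursion for $q_n$, which rests on the submultiplicative estimate $\|\hat P_k(Q_{j_1}(y),\dots,Q_{j_k}(y))\|\le\|\hat P_k\|\,\|Q_{j_1}\|\cdots\|Q_{j_k}\|\,\|y\|^n$ and on matching the combinatorics of $\Phi(w(s))=s$ with that of the recursion; the factor $e^k$ between polynomial and multilinear norms is harmlessly absorbed into the constant $\rho$. An alternative that avoids this bookkeeping is to complexify: extend the power series of $f$ to a holomorphic map on a neighbourhood of $0$ in $E_\C$, apply the holomorphic inverse function theorem --- whose local inverse is automatically holomorphic, because its differential $(df)^{-1}$ is $\C$-linear and holomorphic maps between complex Banach spaces are analytic --- and restrict to real points, using that the $P_k$ are real and that the inverse is unique; this transfers the work to the standard infinite-dimensional complex-analytic machinery.
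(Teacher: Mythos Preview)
The paper does not give its own proof of this proposition; it is quoted from \cite[Theorem 1]{Douady1966} and used as a black box in the appendix. Your argument is the standard majorant proof of the analytic inverse function theorem in Banach spaces and is correct: the reduction to $df|_0=\mathrm{id}$, the recursive determination of the $Q_n$, the passage from polynomial to symmetric multilinear norms via the polarization bound $\|\hat P_k\|\le e^k\|P_k\|$, and the scalar majorant equation $\Phi(w)=s$ are all handled properly, and the identification of the resulting analytic right inverse with the $C^1$ inverse via uniqueness closes the argument. One small point worth tightening: the estimate $\|P_k\|\le C\rho^{-k}$ is exactly what one needs, but the paper's Definition~\ref{def:anal} only asks for pointwise convergence of the Taylor series, which in infinite dimensions does not by itself yield geometric bounds on the polynomial norms; you are tacitly using the stronger (and standard) notion of analyticity in which these Cauchy-type estimates hold, as in \cite{Douady1966}. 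With that understood, your proof is complete and is essentially Douady's approach; the complexification alternative you sketch is also valid and is closer in spirit to Douady's original setting over~$\C$.
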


Now we combine the  implicit   function theorem for   Banach spaces  as formulated  in  \cite[Chapter I, Theorem 5.9]{Lang1999}  and the  analytic inverse  function  theorem  to   prove the following.

\begin{proposition}\label{prop:iftb}  Let  $U, V$  be  an  open  sets  in Banach spaces $E$ and $F$  respectively, and let $f : U \times V \to G$  be an analytic mapping.  Let $(a, b) \in U \times V$  and assume that  the restriction of  the differential $Df$ at $(a, b)$  to $(0,  F)\subset E\times F$   to $G$ is   a topological isomorphism.  Let $f(a, b) = 0$. Then there exist a  small neighborhood $U_0$ of $a$ in $U$  and  an  analytic mapping  $g : U_0 \to V$  such that
$$ f (x, g(x)) = 0$$
for all $x \in U_0$. 
\end{proposition}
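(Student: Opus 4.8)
The plan is to reduce the statement to the analytic inverse function theorem (Proposition \ref{prop:invb}) by the classical graph trick, and to invoke the ordinary (smooth) implicit function theorem of Lang only in order to identify the resulting analytic map with the implicit function.

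First I would introduce the auxiliary map $\Phi \colon U \times V \to E \times G$, $\Phi(x,y) := (x, f(x,y))$. Its first component is the projection, which is linear and hence analytic, and its second component is $f$, which is analytic by hypothesis; since a map into a product of Banach spaces is analytic precisely when each of its components is, $\Phi$ is analytic on $U \times V$, and in particular at $(a,b)$ in the sense of Definition \ref{def:anal}. Next I would compute its differential at $(a,b)$: writing $A := Df_{(a,b)}|_{E \times \{0\}}$ and $B := Df_{(a,b)}|_{\{0\}\times F}\colon F \to G$, one has $D\Phi_{(a,b)}(u,v) = (u,\, Au + Bv)$, a block lower-triangular operator whose inverse $(x,z) \mapsto (x,\, B^{-1}(z - Ax))$ is continuous because $B$ is a topological isomorphism by assumption. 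Hence $D\Phi_{(a,b)}\colon E \times F \to E \times G$ is a topological isomorphism.

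By Proposition \ref{prop:invb}, $\Phi$ is therefore an analytic local isomorphism: there are an open neighborhood of $(a,b)$ in $U\times V$ and an open neighborhood $W$ of $\Phi(a,b) = (a,0)$ in $E \times G$, together with an analytic inverse $\Psi\colon W \to U \times V$. Because $\Phi$ leaves the first coordinate fixed, so does $\Psi$, so $\Psi(x,z) = (x, \psi(x,z))$ for an analytic map $\psi\colon W \to V$. I would then choose an open neighborhood $U_0$ of $a$ in $U$ with $U_0 \times \{0\} \subset W$ and set $g(x) := \psi(x,0)$. Since $x \mapsto (x,0)$ is affine, hence analytic, $g\colon U_0 \to V$ is analytic; moreover $g(a) = b$, and from $\Phi(x, g(x)) = \Phi(\Psi(x,0)) = (x,0)$ we read off $f(x, g(x)) = 0$ for every $x \in U_0$. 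If one wishes to record that $g$ is the unique such map near $b$, this is exactly what the smooth implicit function theorem of Lang applied to $f$ provides, which also shows that any other continuous solution of $f(x,y) = 0$ through $(a,b)$ agrees with $g$ near $a$.

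There is no genuine obstacle here: the argument is entirely formal once Proposition \ref{prop:invb} is in hand. The only points deserving a word of care are that the operations used — assembling a map into a product from analytic components, composing with an affine map, and restricting to a smaller open set — all preserve analyticity in the sense of Definition \ref{def:anal}, which is immediate from Lemma \ref{lem:analb}, and the bookkeeping that $\Phi$, and therefore $\Psi$, preserves the $E$-coordinate, which is what makes $g$ a bona fide function of $x$ alone.
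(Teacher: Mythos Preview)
Your proof is correct and follows exactly the approach indicated in the paper: define $\Phi(x,y) = (x, f(x,y))$, verify that $D\Phi_{(a,b)}$ is a topological isomorphism, and apply the analytic inverse function theorem (Proposition~\ref{prop:invb}) to extract the analytic implicit function $g$. The paper in fact only sketches this reduction and omits the details you have supplied.
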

\begin{proof}  The  proof of Proposition  \ref{prop:iftb}   repeats  the    proof  of  the implicit  function theorem   given in \cite[p. 19]{Lang1999}. It  reduces to  the   analytic inverse function theorem \ref{prop:invb}  by considering      the new  map $\phi: U\times V \to E \times F,\,  (x, y) \mapsto (x, f(x, y))$, so we omit   the detail  of the proof.
\end{proof}

\subsection*{Acknowledgement} A part of this work has been completed while H.V.L. visited the TU Dortmund as a Gambrinus fellow and while both authors visited the Max Planck Institute f\"ur Mathematik in den Naturwissenschaften in Leipzig. We thank both institutions and the Gambrinus foundation for providing excellent working conditions and financial support. We thank Frank Morgan, Jason Lotay,  Lars Sch\"afer, Ji\v ri Van\v zura  and Kotaro Kawai for helpful remarks.

\end{document}